\def\dim{\operatorname{dim}}
\def\Rad{\operatorname{Rad}}
\def\Ker{\operatorname{Ker}}
\def\End{\operatorname{End}}
\def\Der{\operatorname{Der}}
\def\ad{\operatorname{ad}}
\def\Im{\operatorname{Im}}
\def\lim{\opertaorname{lim}}
\def\Nil{\operatorname{Nil}}
\def\span{\operatorname{Span}}
\def\Mat{\operatorname{Mat}}
\newtheorem{teorema}{Theorem}[section]
\newtheorem{lema}[teorema]{Lema}
\newtheorem{proposicion}[teorema]{Proposition}
\newtheorem{definicion}[teorema]{Definition}
\newtheorem{corolario}[teorema]{Corollary}
\newtheorem{ejemplo}[teorema]{Example}
\newtheorem{rmk}[teorema]{Remark}
\def\F{\Bbb F}
\def\N{\Bbb N}
\def\G{\mathcal G}
\def\g{\frak g}
\def\h{\frak h}
\def\G{\frak{G}}
\begin{document}
\title{Invariant metrics on central extensions of quadratic Lie algebras}
\author
{R. Garc\'{\i}a-Delgado$^{(a)(b)}$, G. Salgado$^{(c)}$, O.A. S\'anchez-Valenzuela$^{(d)}$}
\address{(a)-(c) Facultad de Ciencias, Universidad Aut\'onoma de San Luis Potos\'{i}; SLP, M\'exico}
\address{(b)-(d) Centro de Investigaci\'on en Matem\'aticas, A.C., Unidad M\'erida; Yuc, M\'exico}
% \currentaddress{(b) Centro de Investigaci\'on en Matem\'aticas, A.C., Unidad M\'erida; Yucat\'an, M\'exico}
\address{(a)(b)}
\email{rosendo.garcia@cimat.mx, rosendo.garciadelgado@alumnos.uaslp.edu.mx}
\address{(c)}
\email{gsalgado@fciencias.uaslp.mx, gil.salgado@gmail.com}
\address{(d)}
\email{adolfo@cimat.mx}
\keywords {Quadratic Lie algebra; Central extension; Invariant metric; Skew-symmetric derivation}
\subjclass{
Primary:
17Bxx, 17B05, 17B60,
Secondary:
15A63, 17B30, 
}
%\thanks{\noindent \textup{2010} \textit{Mathematics Subject Classification}}

%\date{}
%\keywords
\maketitle

%%%%%%%%%%%%%%%%%%%%%%%%%%%%%%%%%%%%%%%%%%%%%%%%%%%%%%%%%%%%%%%%%%%%
\begin{abstract}
\noindent 
A quadratic Lie algebra is a Lie algebra endowed with a symmetric, invariant and non degenerate bilinear form; such
a bilinear form is called an invariant metric. The aim of this work is to describe the general structure of those
central extensions of quadratic Lie algebras which in turn have invariant metrics. The structure is such that
the central extensions can be described algebraically in terms of the original quadratic Lie algebra, and geometrically in terms of 
the direct sum decompositions that the invariant metrics involved give rise to.
\end{abstract}
%%%%%%%%%%%%%%%%%%%%%%%%%%%%%%%%%%%%%%%%%%%%%%%%%%%%%%%%%%%%%%%%%%%%%%%%%%%

\section*{Introduction}

\noindent A Lie algebra has invariant orthogonal geometry if it comes
equipped with a symmetric, bilinear, non-degenerate form 
satisfying, $B( x,[y,z])=B( [x,y],z)$. 
Such a Lie algebra is called \emph{quadratic\/.}
It is \emph{indecomposable} if cannot
be written as an orthogonal direct sum of two
non-trivial quadratic ideals.

\smallskip
\noindent
In the early 80's, V. Kac posed in  \cite{Kac} two problems.
The first problem asks for extending a given quadratic Lie algebra,
to another which is also quadratic and has a one-dimensional minimal central ideal.
The second problem
asks for proving that any solvable indecomposable Lie algebra is such an extension;
actually, a codimension-2 extension
of a quadratic solvable Lie algebra.
Such extensions have been referred to in the literature as {\it double extensions\/} (see \cite{Bou} and \cite{Fig}). In fact, a double extension is a semidirect product of two Lie algebras, 
one of which is a {\it central extension\/} of a quadratic Lie algebra.

\smallskip
\noindent 
An important goal that motivated this work was to address the following inverse problem in general: given a quadratic Lie algebra $\G$ having a subspace $\g$ that admits itself a structure of a quadratic Lie algebra whose invariant metric is not the restriction of the invariant metric on $\G$, {\it is it true that $\G$ is an extension of $\g$\/?} We have obtained a specific answer to this question in {\bf Theorem} \ref{teorema_c1}.

\smallskip
\noindent 
There is another relationship between central extensions and quadratic Lie algebras. 
Consider a 2-step nilpotent quadratic Lie algebra $\mathfrak{G}$; that is, $[\mathfrak{G},[\mathfrak{G},\mathfrak{G}]]=\{0\}$. Let $\mathfrak{g}$ be a subspace complementary to $[\mathfrak{G},\mathfrak{G}]$ in $\mathfrak{G}$: 
Thus, $\mathfrak{G}=\mathfrak{g} \oplus [\mathfrak{G},\mathfrak{G}]$. We may consider $\mathfrak{g}$ as an Abelian Lie algebra, such that the projection $\pi_{\mathfrak{g}}:\mathfrak{G} \rightarrow \mathfrak{g}$ onto $\mathfrak{g}$, is a Lie algebra epimorphism with kernel $[\mathfrak{G},\mathfrak{G}]_{\mathfrak{G}}$. Since $\mathfrak{g}$ is Abelian, any symmetric, non-degenerate bilinear form defined on it, is an invariant metric. Therefore, the quadratic Lie algebra $\mathfrak{G}$ is a central extension of the Abelian quadratic Lie algebra $\mathfrak{g}$ (see \cite{Ovan} for other important structure theorems about 2-step nilpotent quadratic Lie algebras). One of the problems addressed in this work is to understand the structure of central extensions $\frak{G}$ of a {\it non-Abelian quadratic Lie algebra\/} $\frak{g}$, which are themselves quadratic. 
\smallskip

\noindent 
On the other hand, when we think of quadratic Lie algebras, semisimple Lie algebras appear as immediate examples.
Central extensions of semisimple Lie algebras can then be approached within Levi's decomposition theorem. Then, the corresponding short exact sequence $0\to\Rad{\frak{G}}\to\frak{G}\to\frak{g}\to 0$ always splits and in particular, 
the structure of a central extension $\frak{G}$ of $\frak{g}$ carrying an invariant metric can be easily understood in terms of orthogonal decomposition, as the metric coincides 
up to a scalar factor with the corresponding Cartan-Killing form on each of the simple summands of $\g$.
The question of what happens when $\frak{g}$ is quadratic but not necessarily semisimple is also addressed in this work. 
Thus, we aim to advance our understanding of the structure
of central extensions of quadratic Lie algebras in general.

\smallskip
\noindent 
It is worth noting that central extensions $\frak{h}$ of non-semisimple quadratic Lie algebras $\frak{g}$
play an important first-step role in our understanding of the general problem addressed in this work.
It has been proved in \cite{MCGS}, that a derivation $D$ defined
on a one-dimensional central extension $\frak{h}=\frak{g}\oplus\frak{c}$,
of a Lie algebra $\frak{g}$ admitting an invariant metric, yields an extension $\frak{G}=\frak{h}\oplus\Bbb F\,D$ which can be made into a quadratic Lie algebra whenever $D$ acts trivially on $\frak{c}$
(see {\bf Theorem 3.2} in \cite{MCGS}).

\smallskip
\noindent
This work is organized as follows. In {\bf \S1} we recall the basic definitions, set up the notation
and quote some classical results that will be used throughout this work. In {\bf \S2}, we explore
in detail the structure of the central extensions
of quadratic Lie algebras under the assumption that these extensions admit an invariant metric. We state a proposition that gives necessary and sufficient conditions to determine
whether or not the kernel of a central extension is non-degenerate
({\bf Proposition \ref{teorema de estructura 2}}). Then {\bf \S2} is closed with  
a proposition that reduces the general problem to the understanding of two extreme cases
corresponding to the following situations:
either the kernel of the extension is 
a non-degenerate ideal, or it is an isotropic ideal. The non-trivial case is when the kernel of the central extension is isotropic. For that, we then introduce in {\bf \S3}
a construction called {\it double central extension\/}  inspired 
in the {\it generalized double extensions\/} of Bajo and Benayadi (see \cite{Ben}).
A proposition is proved that gives necessary and sufficient conditions
for a central extension of a quadratic Lie algebra, admitting an invariant metric with isotropic kernel, to be such
a double central extension ({\bf Propostion 3.1}).
Then, {\bf \S3} is closed with a theorem 
that describes the relationship between the algebraic and the geometric
structures of a central extension ({\bf Theorem \ref{teorema_c1}}). 
Finally, we prove that any 
quadratic Lie algebra obtained as a
central extension 
by a $1$-dimensional or by a
$2$-dimensional kernel, always splits ({\bf Proposition \ref{teorema c2}}), and we provide 
examples of extensions by $r$-dimensional ideals ($r\ge 3$),
that do not split (see {\bf Example \ref{ejemplo-correccion}}).

%%%%%%%%%%%%%%%%%%%%%%%%%%%%%%%%%%\section{Main results}

\section{Basic definitions}

\noindent All the Lie algebras considered on this paper are finite dimensional over an algebraically closed 
field $\mathbb{F}$ of zero characteristic.

\begin{definicion}{\rm
A Lie algebra over $\mathbb{F}$, is a pair consisting of an $\Bbb F$-vector space $\mathfrak{g}$,
and a skew-symmetric bilinear map 
$[\,\cdot\,,\,\cdot\,]:\mathfrak{g} \times \mathfrak{g} \rightarrow \mathfrak{g}$, 
satisfying the {\bf Jacobi identity}:
$$
[x,[y,z]]+[y,[z,x]]+[z,[x,y]]=0,\ \ \forall \, x,y,z \in \mathfrak{g}.
$$
The element $[x,y]\in\mathfrak{g}$ is referred to as 
{\it the Lie bracket\/} of $x$ and $y$ in $\mathfrak{g}$.
Most of the time we shall write $[\,\cdot\,,\,\cdot\,]_{\frak{g}}$,
since different Lie algebras will appear.
}
\end{definicion}

\noindent
We shall adhere ourselves to the standard convention of writing,
$$
C(\frak{g})=\{z\in\frak{g}\mid [z,x]_{\frak{g}}=0,\ \text{for all}\ x\in\frak{g}\,\},
$$
for {\bf the center} of the Lie algebra $(\mathfrak{g},[\cdot,\cdot]_{\frak{g}}$).
\smallskip

\noindent The {\bf derived central series} of a Lie algebra $(\g,[\cdot,\cdot]_{\g})$ is defined by,
$$
C_1(\mathfrak{g})=C(\mathfrak{g}); \qquad C_k(\mathfrak{g})=\pi_{k-1}^{-1}C(\g/C_{k-1}(\g)), \quad k\geq 1,
$$
where $\pi_{k-1}:\g \to \g/C_{k-1}(\g)$ is the corresponding canonical projection.
\smallskip

\noindent The {\bf descending central series} of a Lie algebra $(\g,[\cdot,\cdot]_{\g})$ is defined by,
$$
\mathfrak{g}^0=\mathfrak{g}; \qquad \mathfrak{g}^\ell=[\mathfrak{g},\mathfrak{g}^{\ell-1}], \quad \ell\geq 1.
$$
The Lie algebra $(\g,[\cdot,\cdot]_{\g})$ is {\bf nilpotent} if $\g^n=\{0\}$ for some $n\in\mathbb N$. If
$\g^n=\{0\}$, with $\g^{n-1}\neq \{0\}$, one says that $(\g,[\cdot,\cdot]_{\g})$ is an
{\bf $n$-step} nilpotent Lie algebra.

\begin{definicion}{\rm
Let $(\mathfrak{g},[\cdot,\cdot]_{\mathfrak{g}})$ be a Lie algebra over $\mathbb{F}$. An \textbf{invariant metric} in $(\mathfrak{g},[\cdot,\cdot]_{\mathfrak{g}})$ is a non-degenerate, symmetric, bilinear form 
 $B_{\mathfrak{g}}:\frak{g}\times\frak{g}\to\Bbb F$,
 satisfying $B_{\mathfrak{g}}(x,[y,z]_{\mathfrak{g}})=B_{\mathfrak{g}}([x,y]_{\mathfrak{g}},z)$ for all $x,y,z \in \mathfrak{g}$. One usually refers to this property as {\bf the $\frak{g}$-invariance of} $B_{\mathfrak{g}}$,
 and say that $B_{\mathfrak{g}}$ {\bf is $\frak{g}$-invariant} or simply {\bf invariant} for short. 
 A \textbf{quadratic Lie algebra} is a triple $(\mathfrak{g},[\cdot,\cdot],B_{\mathfrak{g}})$ where $B_{\mathfrak{g}}$ is an invariant metric in $(\mathfrak{g},[\cdot,\cdot]_{\mathfrak{g}})$. 
 We shall occassionally say that $B_{\mathfrak{g}}$ is an invariant metric in $\frak{g}$.}
\end{definicion}

\noindent Two quadratic Lie algebras $(\g,[\cdot,\cdot]_\g,B_{\g})$ and $(\h,[\cdot,\cdot]_{\h},B_{\h})$ are {\bf isometric} if there exists an isomorphism of Lie algebras $\psi:(\g,[\cdot,\cdot]_{\g}) \to (\h,[\cdot,\cdot]_{\h})$, such that $B_{\g}(x,y)=B_{\h}(\psi(x),\psi(y))$, for all $x,y \in \g$. The linear map $\psi:\g \to \h$ is called an {\bf isometry}.
\smallskip

\noindent
We recall that for any non-degenerate and bilinear form $B_{\frak{g}}$ defined on a vector space
$\frak{g}$, we may define a linear map  $B_{\mathfrak{g}}^{\flat}:\mathfrak{g} \to \frak{g}^\ast$, 
into the dual vector space $\frak{g}^\ast$, by means of $B_{\mathfrak{g}}^{\flat}(x)\,(z)=
B_{\mathfrak{g}}(x,z)$ for all $z \in \mathfrak{g}$, and each $x\in\frak{g}$.
When $\frak{g}$ is a Lie algebra and $B_{\mathfrak{g}}$ is an invariant metric on it,
$B_{\mathfrak{g}}^{\flat}:\mathfrak{g} \to \frak{g}^\ast$ is actually an
isomorphism that intertwines the adjoint action of $\frak{g}$ in $\frak{g}$ with the
coadjoint action of $\frak{g}$ in $\frak{g}^\ast$; that is,
$$
B_{\mathfrak{g}}^{\flat}(\ad(x)(y))\,(z) = (\ad(x)^\ast B_{\mathfrak{g}}^{\flat}(y))\,(z),
$$ 
where $\ad(x)^\ast(\theta)=-\theta\circ\ad(x)$, for any $\theta\in\frak{g}^\ast$.

\begin{definicion}\label{adequiv}
%%%   {\rm
For any Lie algebra $(\mathfrak{g},[\cdot,\cdot]_{\mathfrak{g}})$, we shall denote by
$\Gamma(\mathfrak{g})$ the vector subspace of $\End(\mathfrak{g})$ consisting of those
linear maps that commute with the adjoint action; that is,
$$
\Gamma(\mathfrak{g})=\{T \in \End(\mathfrak{g})\,\mid\,T([x,y]_{\mathfrak{g}})=[T(x),y]_{\mathfrak{g}},
\,\forall x,y \in \mathfrak{g} \}.
$$
Using the skew-symmetry of the Lie bracket, it is easy to see that
$T([x,y]_{\mathfrak{g}})=[T(x),y]_{\mathfrak{g}}$ also implies that
$T([x,y]_{\mathfrak{g}})=[x,T(y)]_{\mathfrak{g}}$.
\end{definicion} 

\noindent
Following \cite{Zhu} we shall refer to any linear map $T\in \Gamma(\mathfrak{g})$
as a {\bf centroid}.
In particular, for quadratic Lie algebras, the self-adjoint centroids with respect to a given invariant metric will play an important role in this work. Thus, we introduce a special notation for them; namely, if $B_{\mathfrak{g}}:\mathfrak{g}\times\mathfrak{g}\to\mathbb{F}$ is an invariant metric on $\mathfrak{g}$, we define the following subspace of $\Gamma(\mathfrak{g})$:
$$
\Gamma_{B_{\mathfrak{g}}}(\mathfrak{g})=\{T \in
\Gamma(\mathfrak{g})\,\mid\,B_{\mathfrak{g}}(Tx,y)=B_{\mathfrak{g}}(x,Ty),\,\forall x,y \in
\mathfrak{g} \}.
$$
Moreover, we shall denote by $\Gamma_{B_{\mathfrak{g}}}^0(\mathfrak{g})$ the set of
{\it invertible\/} self-adjoint centroids in $\mathfrak{g}$; that is,
$$
\Gamma_{B_{\mathfrak{g}}}^0(\mathfrak{g})=\Gamma_{B_{\mathfrak{g}}}(\mathfrak{g}) \cap \operatorname{GL}(\mathfrak{g}).
$$
{\bf Note.} Given an invariant metric $B_{\mathfrak{g}}$ in $\mathfrak{g}$, we shall refer to the self-adjoint linear operators in $\mathfrak{g}$
as $B_{\mathfrak{g}}$-\textbf{symmetric maps}. Furthermore, fixing a given $B_{\mathfrak{g}}$, a bilinear form $\bar{B}$ in 
$\mathfrak{g}$ yields an invariant metric in $(\mathfrak{g},[\cdot,\cdot]_{\mathfrak{g}})$ if and only if there is a 
$T \in \Gamma_{B_{\mathfrak{g}}}^0(\mathfrak{g})$ such that $\bar{B}(x,y)=B_{\mathfrak{g}}(T(x),y)$ for all $x,y \in \mathfrak{g}$.
%%%   }

\subsection{Central extensions of Lie Algebras}

\noindent Central extensions on Lie algebras are defined as follows:

\begin{definicion}\label{definicion_cociclos}{\rm
Let $(\mathfrak{g},[\cdot,\cdot]_{\mathfrak{g}})$ be a Lie algebra and let $V$ be a vector space. The function $\theta:\mathfrak{g} \times \mathfrak{g} \rightarrow
V$ is a \textbf{2-cocycle of $\mathfrak{g}$ with values on $V$} if
\begin{itemize}

\item $\theta$ is bilinear and skew-symmetric,

\item For all $x,y,z \in \mathfrak{g}$,  
$$
\theta(x,[y,z]_{\mathfrak{g}})+\theta(y,[z,x]_{\mathfrak{g}})+\theta(z,[x,y]_{\mathfrak{g}})=0.
$$

\end{itemize}
The vector space of $2$-cocycles of $(\mathfrak{g},[\cdot,\cdot]_{\mathfrak{g}})$ with values on $V$ is denoted by $Z^2(\mathfrak{g},V)$. A \textbf{$2$-coboundary}  is a $2$-cocycle $\theta$ for which there exists a linear map $\tau:\mathfrak{g} \rightarrow V$, satisfying
$\theta(x,y)=\tau([x,y]_{\mathfrak{g}})$, for all $x,y \in \mathfrak{g}$. The vector space of the $2$-coboundaries is usually denoted by $B^2(\mathfrak{g},V)$.}
\end{definicion}

\begin{definicion}
{\rm
Let $(\mathfrak{g},[\cdot,\cdot]_{\mathfrak{g}})$ be a Lie algebra, let $V$ be a vector space and let $\theta \in Z^2(\mathfrak{g},V)$ be a $2-$cocycle. In the vector space $\mathfrak{G}=\mathfrak{g} \oplus V$, we define a Lie bracket $[\cdot,\cdot]_{\mathfrak{G}}$ by, 
\begin{equation}\label{tc1}
[x+u,y+v]_{\mathfrak{G}}=[x,y]_{\mathfrak{g}}+\theta(x,y),
\end{equation}
for all $x,y \in \mathfrak{g}$, and $u,v \in V$. We observe that $V$ is an \emph{ideal} of $(\mathfrak{G},[\cdot,\cdot]_{\mathfrak{G}})$ contained in the center of $\mathfrak{G}$.}
\end{definicion}
\noindent If $\iota:V \rightarrow \mathfrak{G}$ is the inclusion map and $\pi_{\mathfrak{g}}:\mathfrak{G} \rightarrow \mathfrak{g}$ is the projection onto $\mathfrak{g}$, then there exists a short exact sequence of Lie algebras:
\begin{equation}\label{m}
\begin{array}{rcccccccl}
\,& \, & \, & \iota & \,&\pi_{\mathfrak{g}} &\,&\,& \\
0 & \rightarrow & V & \rightarrow & \mathfrak{G} & \rightarrow & \mathfrak{g} & \rightarrow & 0.
\end{array}
\end{equation}
The Lie algebra $(\mathfrak{G},[\cdot,\cdot]_{\mathfrak{G}})$ is
\textbf{the central extension of $(\mathfrak{g},[\cdot,\cdot]_{\mathfrak{g}})$ by $V$
associated to the $2$-cocylce $\theta$}.
The ideal $V$ is \textbf{the kernel of the extension},
and $\dim_{\mathbb{F}}V$ is \textbf{the dimension of the extension}.

\begin{definicion}{\rm 
The short exact sequence \eqref{m} \textbf{splits} if there exists a Lie algebra morphism, $\alpha:(\mathfrak{g},[\cdot,\cdot]_{\mathfrak{g}}) \rightarrow (\mathfrak{G},[\cdot,\cdot]_{\mathfrak{G}})$, such that, $\pi_{\mathfrak{g}} \circ \alpha=\operatorname{Id}_{\mathfrak{g}}$.}
\end{definicion}

\noindent In a central extension $(\mathfrak{G},[\cdot,\cdot]_{\mathfrak{G}})$, it is not necessarily true that
$\mathfrak{g}$ is an ideal. It is, however, under the conditions given by the following:

\begin{proposicion}[\cite{Che}]\label{escision}
The following statements are equivalent:
\begin{enumerate}

\item[(i)] The short exact sequence \eqref{m}, splits.

\item[(ii)] There exists a linear map $\tau:\mathfrak{g} \rightarrow V$, such that, $\theta(x,y)=\tau([x,y]_{\g})$, for all $x,y \in g$, 
where $\theta$ is the $2$-cocycle of the extension.

\item[(iii)] There exists a Lie algebra isomorphism $\Psi:(\mathfrak{G},[\cdot,\cdot]_{\mathfrak{G}})\to(\mathfrak{g},[\cdot,\cdot]_{\mathfrak{g}}) \oplus V$, such that $\Psi|_V=\operatorname{Id}_V$.

\item[(iv)] There exists an ideal $\mathfrak{h}$ of $(\mathfrak{G},[\cdot,\cdot]_{\mathfrak{G}})$, such that, $(\mathfrak{G},[\cdot,\cdot]_{\mathfrak{G}})=(\mathfrak{h},[\cdot,\cdot]_{\mathfrak{G}}|_{\mathfrak{h} \times \mathfrak{h}}) \oplus V$, where $(\mathfrak{h},[\cdot,\cdot]_{\mathfrak{G}}|_{\mathfrak{h} \times \mathfrak{h}})$ is isomorphic to $(\mathfrak{g},[\cdot,\cdot]_{\mathfrak{g}})$.

\end{enumerate}

\end{proposicion}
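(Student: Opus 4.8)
The plan is to show that the four conditions are equivalent by proving the cyclic chain $(i)\Rightarrow(ii)\Rightarrow(iii)\Rightarrow(iv)\Rightarrow(i)$, using throughout the two structural facts available from the construction: that $\mathfrak{G}=\mathfrak{g}\oplus V$ as a vector space, and that the bracket is given by \eqref{tc1}, so that $\pi_{\mathfrak{g}}:\mathfrak{G}\to\mathfrak{g}$ is automatically a Lie algebra epimorphism with kernel $V$. For $(i)\Rightarrow(ii)$, I would start from a splitting $\alpha:\mathfrak{g}\to\mathfrak{G}$ with $\pi_{\mathfrak{g}}\circ\alpha=\operatorname{Id}_{\mathfrak{g}}$; the condition on $\pi_{\mathfrak{g}}$ forces $\alpha(x)=x+\tau(x)$ for a unique linear $\tau:\mathfrak{g}\to V$. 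Imposing that $\alpha$ be a Lie morphism and comparing, via \eqref{tc1}, the $V$-components of $\alpha([x,y]_{\mathfrak{g}})$ and of $[\alpha(x),\alpha(y)]_{\mathfrak{G}}$ yields precisely $\theta(x,y)=\tau([x,y]_{\mathfrak{g}})$, which is $(ii)$.

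For $(ii)\Rightarrow(iii)$, given such a $\tau$ I would define the \emph{shearing} map $\Psi:\mathfrak{G}\to\mathfrak{g}\oplus V$ by $\Psi(x+v)=x+\bigl(v-\tau(x)\bigr)$, where on the target $V$ is an abelian central ideal. This map fixes $V$ pointwise and is a linear bijection (its inverse is $x+w\mapsto x+(w+\tau(x))$). The cocycle relation $\theta=\tau\circ[\cdot,\cdot]_{\mathfrak{g}}$ from $(ii)$ is exactly what makes $\Psi$ a Lie algebra morphism: the $V$-component of $\Psi$ applied to a bracket in $\mathfrak{G}$ cancels against $\theta$, leaving only $[x,y]_{\mathfrak{g}}$, which agrees with the bracket of the images in $\mathfrak{g}\oplus V$. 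For $(iii)\Rightarrow(iv)$, I would set $\mathfrak{h}=\Psi^{-1}(\mathfrak{g}\oplus\{0\})$; since $\mathfrak{g}\oplus\{0\}$ is an ideal of the product and a direct complement of $V$, and $\Psi$ is an isomorphism fixing $V$, it follows that $\mathfrak{h}$ is an ideal of $\mathfrak{G}$, that $\mathfrak{h}\cong\mathfrak{g}$, and that $\mathfrak{G}=\mathfrak{h}\oplus V$.

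For $(iv)\Rightarrow(i)$ I would deliberately avoid invoking the abstract isomorphism $\mathfrak{h}\cong\mathfrak{g}$ directly, and instead observe that $\pi_{\mathfrak{g}}|_{\mathfrak{h}}:\mathfrak{h}\to\mathfrak{g}$ is a Lie algebra morphism whose kernel is $\mathfrak{h}\cap V=\{0\}$ (because the sum $\mathfrak{G}=\mathfrak{h}\oplus V$ is direct); a dimension count, $\dim\mathfrak{h}=\dim\mathfrak{G}-\dim V=\dim\mathfrak{g}$, then shows $\pi_{\mathfrak{g}}|_{\mathfrak{h}}$ is an isomorphism, so that $\alpha=(\pi_{\mathfrak{g}}|_{\mathfrak{h}})^{-1}:\mathfrak{g}\to\mathfrak{G}$ is a Lie morphism with $\pi_{\mathfrak{g}}\circ\alpha=\operatorname{Id}_{\mathfrak{g}}$, i.e. a splitting of \eqref{m}.

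Most of this is routine bookkeeping, but two points carry the actual content and are where I expect the only friction. The first is the definition of $\Psi$ in $(ii)\Rightarrow(iii)$: getting the sign and placement of the correction term $-\tau(x)$ right so that $\Psi$ both fixes $V$ and intertwines the two brackets. The second, and the one I would flag as the genuine subtlety, is the passage in $(iv)\Rightarrow(i)$ from the \emph{given} abstract isomorphism $\mathfrak{h}\cong\mathfrak{g}$ to the \emph{concrete} isomorphism compatible with $\pi_{\mathfrak{g}}$; this is precisely what the observations $\mathfrak{h}\cap V=\{0\}$ and the dimension count resolve, and it is what guarantees that the resulting section is a section of the \emph{specific} projection appearing in \eqref{m} rather than merely of some projection.
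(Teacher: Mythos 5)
Your proposal is correct: all four implications check out, including the sign choice $\Psi(x+v)=x+(v-\tau(x))$ in $(ii)\Rightarrow(iii)$ (the $+$ sign would fail) and the kernel-plus-dimension-count argument in $(iv)\Rightarrow(i)$, which legitimately avoids needing the abstract isomorphism $\mathfrak{h}\cong\mathfrak{g}$. There is nothing in the paper to compare against: the proposition is stated with a citation to Chevalley--Eilenberg \cite{Che} and no proof is given there, and your cyclic argument $(i)\Rightarrow(ii)\Rightarrow(iii)\Rightarrow(iv)\Rightarrow(i)$ is the standard one for this classical fact.
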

  
\subsection{Skew-symmetric derivations of Quadratic Lie algebras}

\begin{definicion}{\rm
Let $(\mathfrak{g},[\cdot,\cdot]_{\mathfrak{g}})$ be a Lie algebra. A linear map $D \in \End_{\mathbb{F}}(\mathfrak{g})$ is a \textbf{derivation} of $(\mathfrak{g},[\cdot,\cdot]_{\mathfrak{g}})$,
if it satisfies the \textbf{Leibniz rule}:
$$
D([x,y]_{\mathfrak{g}})=[D(x),y]_{\mathfrak{g}}+[x,D(y)]_{\mathfrak{g}},\,\,\,\forall x,y \in \mathfrak{g}.
$$
The vector space of all the derivations of a Lie algebra $(\mathfrak{g},[\cdot,\cdot]_{\mathfrak{g}})$, is denoted 
by $\Der \mathfrak{g}$.
}
\end{definicion}

\begin{definicion}\label{defincion1}{\rm
Let $(\mathfrak{g},[\cdot,\cdot]_{\mathfrak{g}},B_{\mathfrak{g}})$ be a quadratic Lie algebra.
We shall say that a linear map $D \in \End_{\mathbb{F}}(\mathfrak{g})$
is $B_{\g}$-skew-symmetric if it satisfies
$B_{\mathfrak{g}}(D(x),y)=-B_{\mathfrak{g}}(x,D(y))$, for all $x,y \in \mathfrak{g}$. 
The vector space of $B_{\g}$-skew-symmetric linear maps is denoted by $\frak{o}(B_{\g})$.
}
\end{definicion}

\noindent 
There is a relationship between the vector space of $2$-cocycles
and the vector space of the $B_{\g}$-skew-symmetric derivations of a quadratic Lie algebra which we now describe. Let $(\mathfrak{g},[\cdot,\cdot]_{\mathfrak{g}},B_{\mathfrak{g}})$ be a quadratic Lie algebra and let $V$ be an $r$-dimensional vector space. Let $D_1,\ldots,D_r \in \Der \mathfrak{g}$ be $B_{\mathfrak{g}}$-skew-symmetric derivations and let
$\{v_1,\ldots,v_r \}$ be a basis of $V$. Let
$\theta:\mathfrak{g} \times \mathfrak{g} \rightarrow V$ be the bilinear map defined by
$\theta(x,y)=\overset{r}{\underset{i=1}{\sum}}B_{\mathfrak{g}}(D_i(x),y)v_i$, for all $x,y \in \mathfrak{g}$.
Then, $\theta$ is a $2$-cocycle of $(\mathfrak{g},[\cdot,\cdot]_{\mathfrak{g}})$ with values on $V$. 

\smallskip
\noindent
Conversely, let $\theta:\mathfrak{g} \times \mathfrak{g} \rightarrow V$ be a
$2$-cocycle and let $\{v_1,\ldots,v_r \}$ be a basis of $V$. For each pair of elements $x,y \in \mathfrak{g}$, there are scalars $\alpha_1(x,y),\ldots,\alpha_r(x,y) \in \mathbb{F}$ such that $\theta(x,y)=\alpha_1(x,y)v_1+\cdots+\alpha_r(x,y)v_r$. For $i=1,\ldots,r$, we define the linear maps
$\alpha_i(x):\mathfrak{g} \rightarrow \mathbb{F}$, $y \mapsto \alpha_i(x,y)$. Then, there is an element $D_i(x) \in \mathfrak{g}$, depending only on $x$, such that $\alpha_i(x)=B_{\mathfrak{g}}^{\flat}(D_i(x))$, for $i=1,\ldots,r$. 
The $2-$cocycle properties imply that $D_1,\ldots,D_r$ are $B_{\mathfrak{g}}$-skew-symmetric derivations of $(\mathfrak{g},[\cdot,\cdot]_{\mathfrak{g}},B_{\mathfrak{g}})$.
Thus,
\begin{proposicion}\label{proposicion 1}
Let $(\mathfrak{g},[\cdot,\cdot]_{\mathfrak{g}},B_{\mathfrak{g}})$ be a quadratic Lie algebra and 
let $V$ be an $r$-dimensional vector space. Then, the space of $2$-cocycles $Z^{2}(\mathfrak{g},V)$ 
is isomorphic to the space of $r$-tuples of $B_{\mathfrak{g}}$-skew-symmetric derivations of $(\mathfrak{g},[\cdot,\cdot]_{\mathfrak{g}},B_{\mathfrak{g}})$.
Moreover, a $2$-cocycle $\theta\in Z^{2}(\mathfrak{g},V)$ is a coboundary if and only if 
there are $a_1,\ldots,a_r \in \mathfrak{g}$ such that $D_i=\ad_{\mathfrak{g}}(a_i)$ ($1\le i\le r$). 
\end{proposicion}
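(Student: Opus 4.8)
The plan is to establish the claimed isomorphism $Z^2(\mathfrak{g},V)\cong\bigoplus^r\mathfrak{o}(B_{\mathfrak{g}})\cap\Der\mathfrak{g}$ by exhibiting the two mutually inverse constructions already sketched in the text, and then to check that each genuinely lands where claimed. First I would fix the basis $\{v_1,\dots,v_r\}$ of $V$, which lets me decompose any $\theta\in Z^2(\mathfrak{g},V)$ into $r$ scalar $2$-cocycles $\theta=\sum_i\theta_i\,v_i$ with $\theta_i\in Z^2(\mathfrak{g},\mathbb{F})$; this reduces the whole statement to the case $r=1$ together with the obvious direct-sum bookkeeping, so the heart of the matter is the bijection $Z^2(\mathfrak{g},\mathbb{F})\cong\mathfrak{o}(B_{\mathfrak{g}})\cap\Der\mathfrak{g}$.

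For the $r=1$ core, given a scalar $2$-cocycle $\theta$ I would use non-degeneracy of $B_{\mathfrak{g}}$ to define $D$ by $B_{\mathfrak{g}}(D(x),y)=\theta(x,y)$; concretely $D(x)=(B_{\mathfrak{g}}^\flat)^{-1}(\theta(x,\cdot))$, which is well defined precisely because $B_{\mathfrak{g}}^\flat$ is an isomorphism. Skew-symmetry of $\theta$ in $(x,y)$ together with symmetry of $B_{\mathfrak{g}}$ immediately gives $B_{\mathfrak{g}}(D(x),y)=-B_{\mathfrak{g}}(D(y),x)=-B_{\mathfrak{g}}(x,D(y))$, so $D\in\mathfrak{o}(B_{\mathfrak{g}})$. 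The main point is then to convert the cocycle identity
\begin{equation*}
\theta(x,[y,z]_{\mathfrak{g}})+\theta(y,[z,x]_{\mathfrak{g}})+\theta(z,[x,y]_{\mathfrak{g}})=0
\end{equation*}
into the Leibniz rule for $D$. Rewriting each term as a value of $B_{\mathfrak{g}}$ and pushing the bracket across $B_{\mathfrak{g}}$ via invariance, then using skew-symmetry of $D$ to collect everything against a single slot, should reduce the cocycle identity to $B_{\mathfrak{g}}(D[y,z]-[Dy,z]-[y,Dz],x)=0$ for all $x$; non-degeneracy then yields $D([y,z])=[D(y),z]+[y,D(z)]$. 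This manipulation, getting all three indices aligned so that the Jacobi-type cyclic sum collapses into a single Leibniz expression, is the step I expect to require the most care with signs and with repeated use of both invariance and $B_{\mathfrak{g}}$-skew-symmetry.

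In the reverse direction the constructions already given produce, from each $D_i\in\mathfrak{o}(B_{\mathfrak{g}})\cap\Der\mathfrak{g}$, the cocycle $\theta_i(x,y)=B_{\mathfrak{g}}(D_i(x),y)$; skew-symmetry of $\theta_i$ comes from $B_{\mathfrak{g}}$-skew-symmetry of $D_i$, and the cocycle identity is obtained by running the preceding computation backwards using the Leibniz rule. That the two constructions are inverse to one another is forced by non-degeneracy, since both encode the same pairing $B_{\mathfrak{g}}(D_i(x),y)=\theta_i(x,y)$; linearity of both assignments in $\theta$ and in the tuple $(D_1,\dots,D_r)$ then upgrades the bijection to a linear isomorphism. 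Finally, for the coboundary characterization I would expand the definition: $\theta$ is a coboundary iff there is a linear $\tau:\mathfrak{g}\to V$ with $\theta(x,y)=\tau([x,y]_{\mathfrak{g}})$, equivalently $B_{\mathfrak{g}}(D_i(x),y)=B_{\mathfrak{g}}(a_i,[x,y]_{\mathfrak{g}})$ after writing the $i$-th component of $\tau$ as $B_{\mathfrak{g}}(a_i,\cdot)$ for suitable $a_i\in\mathfrak{g}$; invariance rewrites the right-hand side as $B_{\mathfrak{g}}([a_i,x]_{\mathfrak{g}},y)=-B_{\mathfrak{g}}(\ad_{\mathfrak{g}}(a_i)(x),y)$, and non-degeneracy then gives $D_i=\ad_{\mathfrak{g}}(a_i)$ up to a sign convention I would pin down at the outset. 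I anticipate no serious obstacle beyond sign-tracking, since non-degeneracy of $B_{\mathfrak{g}}$ does all the heavy lifting of turning bilinear-form identities into operator identities.
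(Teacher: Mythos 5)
Your proposal is correct and follows essentially the same route as the paper: decompose $\theta$ componentwise via a basis of $V$, set up the correspondence $\theta_i(x,y)=B_{\mathfrak{g}}(D_i(x),y)$ through $B_{\mathfrak{g}}^{\flat}$, derive $D_i\in\mathfrak{o}(B_{\mathfrak{g}})$ from skew-symmetry of $\theta$ and the Leibniz rule from the cocycle identity (your sign bookkeeping does collapse the cyclic sum as you anticipate), and use invariance plus non-degeneracy for the coboundary characterization. The only blemish is a harmless sign slip at the very end, where invariance gives $B_{\mathfrak{g}}(a_i,[x,y]_{\mathfrak{g}})=B_{\mathfrak{g}}(\ad_{\mathfrak{g}}(a_i)(x),y)$ with a plus sign, not a minus; this is immaterial for the statement since $-\ad_{\mathfrak{g}}(a_i)=\ad_{\mathfrak{g}}(-a_i)$.
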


\section{Central extensions of quadratic Lie algebras}

\noindent
\begin{rmk}\label{1} 
From this point on, we shall assume that $(\mathfrak{G},[\cdot,\cdot]_{\mathfrak{G}})$ is a central extension of a given quadratic Lie algebra $(\mathfrak{g},[\cdot,\cdot]_{\mathfrak{g}})$ by an $r$-dimensional vector space $V$,
and that $(\mathfrak{G},[\cdot,\cdot]_{\mathfrak{G}})$ admits an invariant metric. These are the hypotheses of
the following two technical results ({\bf Lemmas \ref{lema 1}} and {\bf \ref{lema 2}}):
\end{rmk}

\begin{lema}\label{lema 1}
There exist linear maps $h:\mathfrak{G} \rightarrow
\mathfrak{g}$ and $k:\mathfrak{g} \rightarrow \mathfrak{G}$ such that
\begin{itemize}

\item[(i)] $h \circ k=\operatorname{Id}_{\mathfrak{g}}$.

\item[(ii)] $B_{\mathfrak{G}}(x+v,y)=B_{\mathfrak{g}}(h(x+v),y)$ and $B_{\mathfrak{g}}(x,y)=B_{\mathfrak{G}}(k(x),y+v)$ for all $x,y \in \mathfrak{g}$ and $v \in V$.

\item[(iii)] $\Ker h=\mathfrak{g}^{\perp}$ and\ \ $\Im k=V^{\perp}$.

\item[(iv)] $\mathfrak{G}=\Ker h \oplus \Im k$.

\item[(v)] $k([x,y]_{\mathfrak{g}})=[k(x),y]_{\mathfrak{G}}$, for all $x,y \in \mathfrak{g}$.

\end{itemize}
\end{lema}
\begin{proof}
  
\noindent Let $\iota: \mathfrak{g} \rightarrow
\mathfrak{G}$ be the inclusion map and let
$\iota^*:\mathfrak{G}^{*} \rightarrow \mathfrak{g}^{*}$ be its dual map.
Since $\iota^{*}(B_{\mathfrak{G}}^{\flat}(\mathfrak{G})) \subseteq B_{\mathfrak{g}}^{\flat}(\mathfrak{g})=\mathfrak{g}^{*}$,
it follows that for each $x \in \mathfrak{G}$, 
there exists a unique $h(x) \in \mathfrak{g}$, such that, $\iota^*(B_{\mathfrak{G}}^{\flat}(x))=B_{\mathfrak{g}}^{\flat}(h(x))$. It is 
immediate to verify that the assignment $h:\mathfrak{G} \rightarrow\mathfrak{g}$ is linear. Since $B_{\mathfrak{g}}$ is non-degenerate,
it also follows that $\Ker h=\mathfrak{g}^{\perp}$. 

\smallskip
\noindent Let $\pi_{\mathfrak{g}}:\mathfrak{G} \rightarrow \mathfrak{g}$ be the linear projection onto 
$\mathfrak{g}$ and let $\pi_{\mathfrak{g}}^*:\mathfrak{g}^{*} \rightarrow \mathfrak{G}^{*}$ be its dual map. 
Using a similar argument for $\pi_{\mathfrak{g}}$ as we used before for $\iota$, we conclude that
there exists a linear map $k:\mathfrak{g} \rightarrow \mathfrak{G}$, such that 
for each $x \in \mathfrak{g}$,
$\pi_{\mathfrak{g}}^*(B_{\mathfrak{g}}^{\flat}(x))=B_{\mathfrak{G}}^{\flat}(k(x))$. 
Since $\pi_{\mathfrak{g}}^*(B_{\mathfrak{g}}^{\flat}(x))\in\operatorname{Ann}(V)=\{\alpha \in \G^{*}\,|\,\alpha(v)=0,\,\forall v \in V\}$,
it follows that $\Im k \subseteq V^{\perp}$. 
\smallskip

\noindent Now let $x,y \in \mathfrak{g}$. Then $B_{\mathfrak{g}}(x,y)=B_{\mathfrak{G}}(k(x),y)=B_{\mathfrak{g}}(h(k(x)),y)$, and
therefore, $h \circ k=\operatorname{Id}_{\mathfrak{g}}$. In particular, 
$h:\mathfrak{G} \rightarrow \mathfrak{g}$ is surjective and $k:\mathfrak{g} \rightarrow \mathfrak{G}$ is injective. 
Now $h \circ k=\operatorname{Id}_{\mathfrak{g}}$ implies that
$\mathfrak{G}=\Ker h \oplus \Im k$.
We may now prove that  $V^{\perp} \subseteq \Im k$. 
Take $y\in V^{\perp}$ and write $y=x+k(z)$ for some $x\in\operatorname{Ker} h$ and $z\in\mathfrak{g}$.
Clearly $B_{\mathfrak{G}}(x+k(z),v)=0$, for any $v\in V$. But $\Im k \subseteq V^{\perp}$
implies $B_{\mathfrak{G}}(x,v)=0$ for the given $x\in\operatorname{Ker} h=\mathfrak{g}^\perp$, and any $v\in V$.
Since $B_{\mathfrak{G}}$ is non-degenerate, this makes $x=0$, and hence $y=k(z)$.

\smallskip
\noindent
Since, $V^{\perp}=\Im k$, it follows that
$\Im k$ is an ideal of $(\mathfrak{G},[\cdot,\cdot]_{\mathfrak{G}})$. So, for each pair $x,y
\in \mathfrak{g}$, there exists a unique element $z \in \mathfrak{g}$ such that $[k(x),y]_{\mathfrak{G}}=k(z)$. 
Using the invariance and non-degeneracy of both $B_{\mathfrak{g}}$ and $B_{\mathfrak{G}}$,
together with the fact that $h\circ k=\operatorname{Id}_{\mathfrak{g}}$,
one concludes that $z=[x,y]_{\mathfrak{g}}$.
Indeed, for any $x^\prime\in\mathfrak{g}$ we have, on the one hand,
$$
\aligned
B_{\mathfrak{G}}([k(x),y]_{\mathfrak{G}},x^\prime)&=B_{\mathfrak{G}}(k(x),[y,x^\prime]_{\mathfrak{G}})=B_{\mathfrak{G}}(k(x),[y,x^\prime]_{\mathfrak{g}})
\\
&=B_{\mathfrak{g}}(h(k(x)),[y,x^\prime]_{\mathfrak{g}})=B_{\mathfrak{g}}([x,y]_{\mathfrak{g}},x^\prime).
\endaligned
$$
On the other hand, $B_{\mathfrak{G}}([k(x),y]_{\mathfrak{G}},x^\prime)=
B_{\mathfrak{G}}(k(z),x^\prime)=B_{\mathfrak{g}}(h(k(z)),x^\prime)=B_{\mathfrak{g}}(z,x^\prime)$.
Therefore $z=[x,y]_{\mathfrak{g}}$ as claimed, and
$k([x,y]_{\mathfrak{g}})=[k(x),y]_{\mathfrak{G}}$ for all $x,y \in \mathfrak{g}$.
\end{proof}

\begin{rmk}\label{2}The linear maps $h:\frak{G}\to\frak{g}$ and $k:\frak{g}\to\frak{G}$ 
can be easily described in terms of the maps $B_{\frak{G}}^\sharp:\frak{G}^\ast\to\frak{G}$
and $B_{\frak{g}}^\sharp:\frak{g}^\ast\to\frak{g}$ which are inverses to
$B_{\frak{G}}^\flat:\frak{G}^\ast\to\frak{G}$
and $B_{\frak{g}}^\flat:\frak{g}^\ast\to\frak{g}$, respectively.
In general, for any non-degenerate, bilinear, symmetric form $B_{\frak{g}}:\frak{g}\times \frak{g}\to\Bbb F$,
the map $B_{\frak{g}}^\sharp:\frak{g}^\ast\to\frak{g}$ is defined in such a way that for any $\theta\in\frak{g}^\ast$,
$B_{\frak{g}}^\sharp(\theta)$ is the unique vector in $\frak{g}$ that satisfies the identity,
$$
 B_{\frak{g}}(B_{\frak{g}}^\sharp(\theta),x)=\theta(x),\quad\forall\,x\in\frak{g}.
$$
Then, 
$$
h = B_{\frak{g}}^\sharp\circ\,\iota^\ast\circ\,B_{\frak{G}}^\flat,\qquad\text{and}\qquad
k = B_{\frak{G}}^\sharp\circ\,\pi_{\frak{g}}^\ast\circ\,B_{\frak{g}}^\flat,
$$
which are equivalent forms of the properties, 
$\pi_{\mathfrak{g}}^*(B_{\mathfrak{g}}^{\flat}(x))=B_{\mathfrak{G}}^{\flat}(k(x))$
and 
$\iota^*(B_{\mathfrak{G}}^{\flat}(x))=B_{\mathfrak{g}}^{\flat}(h(x))$,
respectively, from the statement of the Lemma above. On the other hand, observe that the existence of the linear maps $h:\G \to \g$ and $k:\g \to \G$ is independent of the Lie structure of $(\g,[\cdot,\cdot]_{\g})$ and the central extension $(\G,[\cdot,\cdot]_{\G})$ and their existence depends only of the non-degenerancy and symmetric properties of the bilinear forms $B_{\g}$ and $B_{\G}$.
\end{rmk}

\noindent  
Now, let $\theta:\mathfrak{g} \times \mathfrak{g} \rightarrow V$ be the $2$-cocycle associated to 
a central extension $\frak{G}$ of $\frak{g}$ by $V$. 
Each basis $\{v_1,\ldots,v_r \}$ of $V$, gives rise to $r$, $B_{\mathfrak{g}}$-skew-symmetric derivations 
$D_1,\ldots D_r\in\Der\mathfrak{g}$, through
$\theta(x,y)=\overset{r}{\underset{i=1}{\sum}}B_{\mathfrak{g}}(D_i(x),y)v_i$, for all $x,y \in \mathfrak{g}$
(see {\bf Propostition \ref{proposicion 1}}).
Thus, the Lie bracket $[\cdot ,\cdot]_{\mathfrak{G}}$, can be written as:
\begin{equation}\label{corchete-ext}
[x+u,y+v]_{\mathfrak{G}}=[x,y]_{\mathfrak{g}}+\sum_{i=1}^rB_{\mathfrak{g}}(D_i(x),y)v_i,\,\,\forall x,y \in \mathfrak{g},\,\,\forall u,v \in V.
\end{equation}

\begin{lema}\label{lema 2} 
Let $\{v_1,\ldots, v_r\}$ be a basis of $V$
for which the $2$-cocycle $\theta:\mathfrak{g}\times\mathfrak{g} \to V$
yields the Lie bracket $[\,\cdot\,,\,\cdot\,]_{\frak{G}}$ given in \eqref{corchete-ext} 
for the corresponding central extension $\frak{G}$ of $\frak{g}$ by $V$.
There is a basis $\{a_i+w_i \mid\,a_i \in \mathfrak{g},\,w_i \in
V,\,\,1 \leq i \leq r \}$ of $\mathfrak{g}^{\perp}$ in $\mathfrak{G}$, a 
$B_{\mathfrak{g}}$-symmetric map $T \in \Gamma(\mathfrak{g})$ 
and a linear map
$\rho:\mathfrak{G} \rightarrow \Der \mathfrak{g} \cap \frak{o}(B_{\g})$, such that:
\begin{itemize}

\item[(i)] $B_{\mathfrak{G}}(a_i+w_i,v_j)=\delta_{ij}$.

\item[(ii)] $\Ker T \subseteq C(\mathfrak{g})$.

\item[(iii)] $ h([x,[y,z]_{\mathfrak{G}}]_{\mathfrak{G}})=[x,h([y,z]_{\mathfrak{G}})]_{\mathfrak{g}}$ for all $x \in \mathfrak{g}$ and for all $y,z \in \mathfrak{G}$.

\item[(iv)] $\rho([x,y]_{\mathfrak{g}})=\rho(x) \circ T \circ \rho(y)-\rho(y) \circ T \circ \rho(x)$ for all $x,y \in \mathfrak{g}$ and $\rho(a_i)=D_i$ for all $1 \leq i \leq r$.

\item[(v)] $\Ker D_i=\{x \in \mathfrak{g}\,|\,[a_i,x]_{\mathfrak{G}}=0\}$,
for all $1 \leq i \leq r$.

\item[(vi)] $x=T\circ h(x+v)+\overset{r}{\underset{i=1}{\sum}}B_{\mathfrak{G}}(x+v,v_i)a_i$, for all $x \in \mathfrak{g}$, and for all $v \in V$.

\item[(vii)] $T \circ \rho(x)=\rho(x) \circ T=\ad_{\mathfrak{g}}(x)$, for all $x \in \mathfrak{g}$. In particular, $T \circ D_i=D_i \circ T=\ad_{\mathfrak{g}}(a_i)$, for all $1 \leq i \leq r$.

\item[(viii)] $\mathfrak{g}^\ell \subseteq \Im T^\ell$ and $\Ker T^\ell \subseteq C_{\ell}(\mathfrak{g})$ for all $\ell \in \mathbb{N}$.

\item[(ix)] If $T \neq 0$ and $\Ker(D_1)=\cdots=\Ker(D_r)$, then each $B_{\g}$-skew-symmetric derivation $D_{i}$ is inner.

\end{itemize}

\end{lema}

\begin{proof}

\noindent (i) Since $\mathfrak{g}^\perp=\Ker h \subset\mathfrak{G}$, we may define a 
linear map $\varphi:\Ker h \rightarrow V^{*}$ through,
$\varphi(x)(v)=B_{\mathfrak{G}}(x,v)$ for all
$x \in \Ker h$ and $v \in V$. 
{\bf Lemma \ref{lema 1}.(iv)} says that $\operatorname{dim}_{\mathbb{F}} \Ker h=\operatorname{dim}_{\mathbb{F}} V^{*}$ and
since $B_{\mathfrak{G}}$ is non-degenerate, it follows that $\varphi$ is actually an isomorphism.
Therefore there exists a basis
$\{a_1+w_1,\ldots,a_r+w_r \}$ of $\mathfrak{g}^{\perp}$, such that
$B_{\mathfrak{G}}(a_i+w_i,v_j)=\varphi(a_i+w_i)(v_j)=\delta_{ij}$, as in the statement.
\medskip

\noindent 
(ii) Let us consider the linear map $k:\mathfrak{g} \rightarrow \mathfrak{G}$ of {\bf Lemma \ref{lema 1}}. There are linear maps,
$T:\mathfrak{g} \rightarrow \mathfrak{g}$ and $R:\mathfrak{g} \rightarrow V$, 
such that, $k(x)=T(x)+R(x)$, for all $x \in \mathfrak{g}$. It is obvious that $T=\pi_{\mathfrak{g}} \circ k$. 
By Lemma \ref{lema 1}.(v), it follows that 
$T([x,y]_{\mathfrak{g}})=[Tx,y]_{\mathfrak{g}}$,
for all $x,y \in \mathfrak{g}$.
On the other hand, by Lemma \ref{lema 1}.(ii), we get 
for any $x,y\in\mathfrak{g}$ that, $B_{\mathfrak{g}}(T(x),y)=B_{\mathfrak{G}}(T(x),k(y))=B_{\mathfrak{G}}(k(x),k(y))$. In the same way, $B_{\mathfrak{g}}(x,T(y))=B_{\mathfrak{G}}(k(x),T(y))=B_{\mathfrak{G}}(k(x),k(y))$. Therefore, $T$ is an $\ad_{\mathfrak{g}}$-equivariant map which is $B_{\mathfrak{g}}$-symmetric; {\it ie\/,} $T \in \Gamma_{B_{\mathfrak{g}}}(\mathfrak{g})$.

\smallskip
\noindent 
Now, let $x \in \Ker T$. Then, $k([x,\mathfrak{g}]_{\mathfrak{g}})=[k(x),\mathfrak{g}]_{\mathfrak{G}}=\{0\}$,
since $k(x)\in C(\mathfrak{G})$.
But $k$ is injective ({\bf Lemma \ref{lema 1}.(i)}), thus $x \in C(\mathfrak{g})$.
So, $\Ker T \subseteq C(\mathfrak{g})$. 

\medskip
\noindent 
(iii) The linear map $k:\mathfrak{g} \rightarrow \mathfrak{G}$ of {\bf Lemma \ref{lema 1}} has been seen to satisfy
$k([x,y]_{\mathfrak{g}})=[k(x),y]_{\mathfrak{G}}$ for all $x,y \in \mathfrak{g}$. It is natural to expect
a similar property to hold true for the linear map $h:\mathfrak{G} \rightarrow \mathfrak{g}$. 
This is precisely the property given in the statement. In order to prove it, 
we are going to use the Lie bracket expression \eqref{corchete-ext} together with
the invariance and non-degeneracy of both $B_{\mathfrak{g}}$ and $B_{\mathfrak{G}}$. 
Let $x \in \mathfrak{G}$ and let $y,z \in \mathfrak{g}$. Then, by {\bf Lemma \ref{lema 1}.(ii)} and \eqref{corchete-ext}, it follows:
$$
\aligned
B_{\mathfrak{g}}(h([x,y]_{\mathfrak{G}}),z) &
=B_{\mathfrak{G}}([x,y]_{\mathfrak{G}},z)=B_{\mathfrak{G}}(x,[y,z]_{\mathfrak{G}})
\\
&=B_{\mathfrak{G}}(x,[y,z]_{\mathfrak{g}})+\sum_{i=1}^rB_{\mathfrak{g}}(D_i(y),z)B_{\mathfrak{G}}(x,v_i)
\\
&=B_{\mathfrak{g}}(h(x),[y,z]_{\mathfrak{g}})+B_{\mathfrak{g}}\biggl(\sum_{i=1}^r B_{\mathfrak{G}}(x,v_i)D_i(y),z \biggr)
\\
&=B_{\mathfrak{g}}\biggl([h(x),y]_{\mathfrak{g}}+\sum_{i=1}^rB_{\mathfrak{G}}(x,v_i)D_i(y),z\biggr).
\endaligned
$$
Whence,
\begin{equation}\label{nuevo1}
h([x,y]_{\mathfrak{G}})=[h(x),y]_{\mathfrak{g}}+\overset{r}{\underset{i=1}{\sum}}B_{\mathfrak{G}}(x,v_i)D_i(y).
\end{equation}
\noindent 
We exchange the roles of $x$ and $y$ in this expression, and use the skew-symmetry
of the Lie brackets in $\mathfrak{g}$ and $\mathfrak{G}$ to conclude that,
\begin{equation}\label{nuevo2}
h([x,y]_{\mathfrak{G}})=[x,h(y)]_{\mathfrak{g}}-\sum_{i=1}^rB_{\mathfrak{G}}(y,v_i)D_i(x),\quad \forall x \in \mathfrak{g},\quad \forall y \in \mathfrak{G}.
\end{equation}

\noindent 
We may now rewrite this expression in terms of $y^\prime=
[y,z]_{\mathfrak{G}}\in\mathfrak{G}$ rather than $y\in\mathfrak{G}$, and then
use the invariance of $B_{\mathfrak{G}}$ together with 
the fact that the $v_i$'s lie in the center of $\mathfrak{G}$ to conclude that,
\begin{equation}\label{nuevo3}
h([x,[y,z]_{\mathfrak{G}}]_{\mathfrak{G}})=[x,h([y,z]_{\mathfrak{G}})]_{\mathfrak{g}},\,\,\forall x \in \mathfrak{g},\,\forall y,z \in \mathfrak{G}.
\end{equation}

\noindent 
(iv) 
The invariance of $B_{\mathfrak{G}}$ implies that 
$[\mathfrak{G},\mathfrak{G}]_{\mathfrak{G}} \subseteq V^{\perp}=\operatorname{Im}k$. 
Thus, for each pair $x,y \in\mathfrak{G}$, there exists an unique element
$[x,y]_{\Delta} \in \mathfrak{g}$ such that $[x,y]_{\mathfrak{G}}=k([x,y]_{\Delta})$. 
Lemma \ref{lema 1}.(i) says that $h\circ k=\operatorname{Id}_{\mathfrak{g}}$, so $[x,y]_{\Delta}=h([x,y]_{\mathfrak{G}})$, for all $x,y \in \mathfrak{G}$. 
Thus $[\cdot,\cdot]_{\Delta}$ is a bilinear, skew-symmetric map on $\mathfrak{G}$ with values in $\mathfrak{g}$ and $[V,\mathfrak{G}]_{\Delta}=0$.
By \eqref{nuevo1} and the statement (i) above, it follows that $[a_i+w_i,x]_{\Delta}=[a_i,x]_{\Delta}=D_i(x)$, for all $x \in \mathfrak{g}$ and for all $1 \leq i \leq r$. 
\smallskip

\noindent
Now, the skew-symmetric map $[\cdot,\cdot]_{\Delta}$ induces a linear map $\rho:\mathfrak{G} \rightarrow \Der \mathfrak{g} \cap \frak{o}(B_{\g})$. It is given by $\rho(x)(y)=[x,y]_{\Delta}=h([x,y]_{\mathfrak{G}})$, for all $x \in \mathfrak{G}$ and $y \in \mathfrak{g}$. Indeed, 
using the adjoint representation $\ad_{\mathfrak{G}}:\mathfrak{G}\rightarrow \Der \mathfrak{G}$,
and the expressions \eqref{nuevo1} and \eqref{nuevo3} above, we get,
$$
\aligned
\rho(x)([y,z]_{\mathfrak{g}})
&=h([x,[y,z]_{\mathfrak{g}}]_{\mathfrak{G}})=h([x,[y,z]_{\mathfrak{G}}]_{\mathfrak{G}})=h([[x,y]_{\mathfrak{G}},z]_{\mathfrak{G}})+h([y,[x,z]_{\mathfrak{G}}]_{\mathfrak{G}})
\\
& =[h([x,y]_{\mathfrak{G}}),z]_{\mathfrak{g}}+[y,h([x,z]_{\mathfrak{G}})]_{\mathfrak{g}}=[\rho(x)(y),z]_{\mathfrak{g}}+[y,\rho(x)(z)]_{\mathfrak{g}}.
\endaligned
$$
On the other hand, notice that, $B_{\mathfrak{g}}(\rho(x)(y),z)=B_{\mathfrak{G}}([x,y]_{\mathfrak{G}},z)=-B_{\mathfrak{G}}(y,[x,z]_{\mathfrak{G}})=-B_{\mathfrak{g}}(y,\rho(x)(z))$. Therefore, $\operatorname{Im}\rho$ is contained in $\Der(\g) \cap \frak{o}(B_{\mathfrak{g}})$.
In particular, we may take $D_i=\rho(a_i)$, for all $1 \leq i \leq r$. We shall now prove that, 
\begin{equation}\label{T-representacion}	
\rho([x,y]_{\mathfrak{g}})=\rho(x) \circ T \circ \rho(y)-\rho(y) \circ T \circ \rho(x),\,\,\forall x,y \in \mathfrak{g}.
\end{equation}
For that, let us observe that by {\bf Lemma \ref{lema 1}.(i)-(v)}, we have,
\begin{equation}\label{f1}
[x,T(y)]_{\Delta}=h([x,T(y)]_{\mathfrak{G}})=h([x,k(y)]_{\mathfrak{G}})=[x,y]_{\mathfrak{g}},\,\,\,\forall x,y \in \mathfrak{g}.
\end{equation}
In order to prove \eqref{T-representacion}, we also need the Leibniz rule for $\ad_{\mathfrak{G}}$ and expression \eqref{nuevo2}. Indeed, let $x,y,z \in \mathfrak{g}$. Then,
$$
\aligned
\rho([x,y]_{\mathfrak{g}})(z)
&=h([[x,y]_{\mathfrak{g}},z]_{\mathfrak{G}})=h([[x,y]_{\mathfrak{G}},z]_{\mathfrak{G}})=h([x,[y,z]_{\mathfrak{G}}]_{\mathfrak{G}})-h([y,[x,z]_{\mathfrak{G}}]_{\mathfrak{G}})
\\
&=[x,[y,z]_{\Delta}]_{\mathfrak{g}}-[y,[x,z]_{\Delta}]_{\mathfrak{g}}=[x,T([y,z]_{\Delta})]_{\Delta}-[y,T([x,z]_{\Delta})]_{\Delta}
\\
&=\rho(x) \circ T \circ \rho(y)(z)-\rho(x) \circ T \circ \rho(y)(z),
\endaligned
$$
and our claim follows.

\smallskip
\noindent 
It is not difficult to verify that the space of derivations $\Der \mathfrak{g}$, with the bracket $[D,D']_T=D \circ T \circ D'-D' \circ T \circ D$, is a Lie algebra. Then, $\rho|_{\mathfrak{g}}:\mathfrak{g} \rightarrow \Der \mathfrak{g} \cap \frak{o}(B_{\g})$ is a Lie algebra morphism between $(\mathfrak{g},[\cdot,\cdot]_{\mathfrak{g}})$ and $(\Der \mathfrak{g}
\cap \frak{o}(B_{\g}),[\cdot,\cdot]_T)$.

\smallskip
\noindent (v) From (iv), we get $[a_i,x]_{\mathfrak{G}}=k(\rho(a_i)(x))=k(D_i(x))$. 
Since $k$ is injective, it follows that $\Ker D_i=\{x \in \mathfrak{g}\,\mid\,[a_i,x]_{\mathfrak{G}}=0\}$, $1 \leq i \leq r$.

\medskip
\noindent 
(vi) Since $B_{\mathfrak{g}}$ is non-degenerate, there are elements $a'_1,\ldots,a'_r \in \mathfrak{g}$, such that, $R(x)=\overset{r}{\underset{i=1}{\sum}}B_{\mathfrak{g}}(a'_i,x)v_i$, for all $x \in \mathfrak{g}$. So, the linear map $k:\mathfrak{g} \rightarrow \mathfrak{G}$, can be written as:
\begin{equation}\label{expresion para k}
k(x)=T(x)+\sum_{i=1}^rB_{\mathfrak{g}}(a'_i,x)v_i,\,\,\forall x \in \mathfrak{g}.
\end{equation}
We now want to show that $a'_i=a_i$ for all $1\le i\le r$.
Note first that, for any pair $x,y\in\mathfrak{g}$,
$$
\aligned
B_{\mathfrak{g}}(x,y) & =B_{\mathfrak{G}}(k(x),y)=B_{\mathfrak{G}}(T(x),y)+B_{\mathfrak{G}}(R(x),y)
\\
&
=B_{\mathfrak{g}}(T(x),h(y))+\sum_{i=1}^rB_{\mathfrak{g}}(a'_i,x)B_{\mathfrak{G}}(v_i,y)
\\
&
=B_{\mathfrak{g}}\biggl(x,T\circ h(y)+\overset{r}{\underset{i=1}{\sum}}B_{\mathfrak{G}}(v_i,y)a'_i \biggr)
\endaligned
$$
Since $B_{\mathfrak{g}}$ is non-degenerate, it follows that $y=T\circ h(y)+\overset{r}{\underset{i=1}{\sum}}B_{\mathfrak{G}}(y,v_i)a'_i$, for all $y \in \mathfrak{g}$. Let $x \in \mathfrak{g}$ and $v \in V$. Using the {\bf Lemma \ref{lema 1}(i)-(ii)} and the fact that $B_{\mathfrak{G}}(k(x),v)=0$, we deduce that $T\circ h(v)=-\overset{r}{\underset{i=1}{\sum}}B_{\mathfrak{G}}(v_i,v)a'_i$, for any $v \in V$. So, we get,
\begin{equation}
\label{ex4}
x=T\circ h(x+v)+\sum_{i=1}^r B_{\mathfrak{G}}(x+v,v_i)a'_i,\quad \forall x \in \mathfrak{g},\,\forall v \in V.
\end{equation}
By letting $x=a_j$ and $v=w_j$, it follows that $a_j=a'_j$, for all $1 \leq j \leq r$.

\medskip
\noindent (vii) Let $x,y \in \mathfrak{g}$. Then, by \eqref{f1}, we get $\rho(x)(T(y))=[x,y]_{\mathfrak{g}}$. Thus $\rho(x) \circ T=\ad_{\mathfrak{g}}(x)$ for each $x \in \mathfrak{g}$; in particular 
$D_i \circ T=\ad_{\mathfrak{g}}(a_i)$, for $1 \leq i \leq r$. 
Using the expression \eqref{ex4} and the invariance of $B_{\mathfrak{G}}$, we get, $[x,y]_{\mathfrak{g}}=T(h[x,y]_{\mathfrak{G}})=T \circ \rho(x)(y)$, for all $x,y \in \mathfrak{g}$.
%$$
%[x,y]_{\mathfrak{g}}=T(h[x,y]_{\mathfrak{G}})=T \circ \rho(x)(y),\,\forall x,y \in \mathfrak{g}.
%$$
Therefore $\ad_{\mathfrak{g}}(x)=T \circ \rho(x)$ for all $x \in \mathfrak{g}$; in particular $T \circ D_i=D_i \circ T=\ad_{\mathfrak{g}}(a_i)$, for all $1 \leq i \leq r$.

\medskip
\noindent 
(viii) By (vii), we know that $[x,y]_{\mathfrak{g}}=T(h([x,y]_{\mathfrak{G}}))$, for all $x,y \in \mathfrak{g}$. Then
$$
[x,[y,z]_{\mathfrak{g}}]_{\mathfrak{g}}=[x,T\circ h([y,z]_{\mathfrak{G}})]_{\mathfrak{g}}=T([x,h([y,z]_{\mathfrak{g}})]_{\mathfrak{g}})
$$
$$
=T^2\circ h([x,h([y,z]_{\mathfrak{G}})]_{\mathfrak{G}}),\forall x,y,z \in \mathfrak{g}.
$$
Therefore $\mathfrak{g}^2 \subseteq \Im T^2$. Continuing in this way, we obtain,
$$
[x_1,[\,\,,\,\ldots[x_\ell,x_{\ell+1}]_{\mathfrak{g}}\ldots]_{\mathfrak{g}}=T^{\ell}\circ h([x_1,[\,\,,\ldots,h([x_\ell,x_{\ell+1}]_{\mathfrak{G}})\,\ldots]_{\mathfrak{G}}).
$$
for any $x_1,\ldots,x_{\ell+1} \in \mathfrak{g}$.
Thus, $\mathfrak{g}^\ell \subseteq \Im T^\ell$. 
Since $(\mathfrak{g}^\ell)^{\perp}=C_{\ell}(\mathfrak{g})$, and 
it follows that $\Ker T^\ell \subseteq C_{\ell}(\mathfrak{g})$, $\forall\,\ell \in \mathbb{N}$.
\medskip

\noindent (ix) Suppose $\Ker(D_1)=\cdots=\Ker(D_r)$. 
We know from
 {\bf (v)} that $D_{i}(a_i)=0$, for all $1 \le i \le r$; that is, $a_i \in \Ker(D_i)$ for all $1 \le i \le r$. The hypothesis
allow us to conculde that $D_j(a_i)=0$, for all $1 \le i,j \le r$. On the other hand, from {\bf (iv)}, for each $B_{\g}$-skew-symmetric derivation $D_i$, we have $\rho(a_i)(x)=D_i(x)$, and therefore,
$$
D_i(x)=h([a_i,x]_{\G})=h([a_i,x]_{\g})+\sum_{j=1}^rB_{\g}(D_j(a_i),x)h(v_j)=h([a_i,x]_{\g}),\quad \forall x \in \g.
$$
It follows from {\bf (v)} that, $\Ker(D_i)=C_{\g}(a_i)=\{x \in \g\,|\,[a_i,x]_{\g}=0\}$, for all $1 \le i \le r$. Let $x \in \g$ be an arbitrary element and let $\chi_T=\alpha_s x^s+\alpha_{s-1}x^{s-1}+...+\alpha_1 x+\alpha_0 \in \F[X]$ be the 
characteristic polynomial of $T$. Then,
$$
\aligned
0 & =\chi_T(T)(D_i(x)) 
\\
& =D_i(\chi_T(T)(x))=D_i((\alpha_s
T^s+\alpha_{s-1}T^{s-1}+\cdots+\alpha_1T+\alpha_0)(x))
\\
&=\alpha_sD_i\circ T^{s}(x)+\cdots+\alpha_1D_i\circ T(x)+\alpha_0D_i(x)
\\
&=[\alpha_sT^{s-1}(a)+\cdots+\alpha_1a_i,x]_{\mathfrak{g}}+\alpha_0D_i(x)
\\
&=[\chi_1(T)(a_i),x]_{\mathfrak{g}}+\alpha_0D_i(x),
\endaligned
$$
where $\chi_1(T)=\alpha_s x^{s-1}+.\cdots+\alpha_2 x+\alpha_1 \in
\mathbb{F}[X]$. If $\alpha_0 \neq 0$, then
$D_i=\ad_{\mathfrak{g}}(-\alpha_0^{-1}\chi_1(T)(a_i))$. If $\alpha_0=0$,
it follows that $\chi_1(T)(x) \in C_{\mathfrak{g}}(a_i)=\Ker D_i$. Thus,
$$
\aligned
0&=D_i(\chi_1(T)(x))=D_i((\alpha_sT^{s-1}+\cdots+\alpha_2T+\alpha_1)(x))
\\
&=\alpha_s[T^{s-2}(a_i),x]_{\mathfrak{g}}+\cdots+\alpha_2[a_i,x]_{\mathfrak{g}}+\alpha_1D_i(x)
\\
&=[\chi_2(T)(a_i),x]_{\mathfrak{g}}+\alpha_1D_i(x)
\endaligned
$$
where $\chi_2=\alpha_sx^{s-2}+\ldots+\alpha_3x+\alpha_2$. If $\alpha_1
\neq 0$ then $D_i=\ad_{\mathfrak{g}}(-\alpha_1^{-1}\chi_2(T)(a_i))$. If
$\alpha_1=0$ then $\chi_2(T)(x) \in
C_{\mathfrak{g}}(a)=\Ker D_i$. Let $j$ be the minimal positive integer such that $\alpha_{j} \neq 0$. Then
$D_i=\ad_{\mathfrak{g}}(-\alpha_{j}^{-1}\chi_{j+1}(T)(a_i))$, where
$\chi_{j+1}=\alpha_sx^{s-j-1}+\cdots+\alpha_{j+2}x+\alpha_{j+1} \in
\mathbb{F}[X]$. Therefore, $D_i$ is an inner derivation, for each $i$. 
\end{proof}

\noindent 
It is an immediate consequence of {\bf (ix)} in {\bf Lemma} \ref{lema 2} the following result.

\begin{corolario}\label{corolario internas}
Let $(\g,[\cdot,\cdot]_{\g})$ be a Lie algebra over a field $\F$ of zero characteristic. Let $T \in \Gamma(\g)$ be a non-zero centroid and let $D \in \Der(\g)$ such that $T \circ D=D \circ T=\ad_{\g}(a)$, for some $a \in \g$. If $\Ker(D)=C_{\g}(a)$, then $D$ is an inner derivation.
\end{corolario}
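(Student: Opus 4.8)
The plan is to abstract the computation already carried out in part (ix) of \textbf{Lemma \ref{lema 2}}, since the statement here is precisely that computation stripped of the central-extension context. Three facts will be exploited. First, that $T$ is a centroid, which gives $T\circ\ad_{\g}(x)=\ad_{\g}(x)\circ T=\ad_{\g}(Tx)$ and, by iteration, the transfer identity $[T^{k}(a),x]_{\g}=[a,T^{k}(x)]_{\g}$ for every $k$. Second, that $T\circ D=D\circ T=\ad_{\g}(a)$, which yields $D\circ T^{k}=\ad_{\g}(T^{k-1}(a))$ for $k\ge 1$ (peel off one $T$ and slide the centroid $T$ past $\ad_{\g}(a)$), and in particular shows $D$ commutes with every polynomial in $T$. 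Third, the Cayley--Hamilton theorem applied to $T$.

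First I would write the characteristic polynomial of $T$ as $\chi_{T}(X)=\alpha_{s}X^{s}+\cdots+\alpha_{1}X+\alpha_{0}\in\F[X]$, which is monic of degree $s=\dim_{\F}\g$, so at least $\alpha_{s}=1$ is nonzero. Since $\chi_{T}(T)=0$, applying $D$ and expanding term by term gives, for every $x\in\g$,
$$
0=D(\chi_{T}(T)(x))=\Bigl[\,\textstyle\sum_{k=1}^{s}\alpha_{k}T^{k-1}(a),\,x\,\Bigr]_{\g}+\alpha_{0}D(x)=[\chi_{1}(T)(a),x]_{\g}+\alpha_{0}D(x),
$$
where $\chi_{1}(X)=\alpha_{s}X^{s-1}+\cdots+\alpha_{2}X+\alpha_{1}$. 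If $\alpha_{0}\neq0$ this already exhibits $D=\ad_{\g}(-\alpha_{0}^{-1}\chi_{1}(T)(a))$, so $D$ is inner.

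If $\alpha_{0}=0$, then $[\chi_{1}(T)(a),x]_{\g}=0$ for all $x$, and this is where the hypothesis $\Ker D=C_{\g}(a)$ enters through the transfer identity: rewriting $[\chi_{1}(T)(a),x]_{\g}=[a,\chi_{1}(T)(x)]_{\g}=0$ shows $\chi_{1}(T)(x)\in C_{\g}(a)=\Ker D$ for every $x$. Applying $D$ to $\chi_{1}(T)(x)$ and expanding exactly as before drops the degree by one and produces $[\chi_{2}(T)(a),x]_{\g}+\alpha_{1}D(x)=0$, with $\chi_{2}(X)=\alpha_{s}X^{s-2}+\cdots+\alpha_{2}$. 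Iterating, I let $j$ be the least index with $\alpha_{j}\neq0$ (which exists because $\chi_{T}$ is monic); at that stage the coefficient in front of $D(x)$ no longer vanishes, and I read off $D=\ad_{\g}(-\alpha_{j}^{-1}\chi_{j+1}(T)(a))$, so $D$ is inner.

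The only delicate point, and the one I would write out carefully, is the inductive descent: at each step I must certify that the vanishing of the current constant coefficient, together with the centroid transfer identity, really places $\chi_{m}(T)(x)$ inside $\Ker D=C_{\g}(a)$, so that the next application of $D$ is legitimate and again collapses to a single bracket term plus a scalar multiple of $D(x)$. Everything else is routine bookkeeping of the coefficients $\alpha_{k}$ and of the shifted polynomials $\chi_{m}$; the hypothesis $T\neq0$ is needed only to be in a genuine case, as the argument goes through trivially when $T=0$ (then $\ad_{\g}(a)=T\circ D=0$ forces $a\in C(\g)$, whence $\Ker D=\g$ and $D=0=\ad_{\g}(0)$).
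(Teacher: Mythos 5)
Your proposal is correct and takes essentially the same route as the paper: the paper proves this corollary simply by invoking the computation in \textbf{Lemma \ref{lema 2}.(ix)} (Cayley--Hamilton applied to $T$, then descent on the least non-vanishing coefficient $\alpha_j$ of $\chi_T$, with the hypothesis $\Ker D=C_{\g}(a)$ entering through the centroid transfer identity $[\chi_m(T)(a),x]_{\g}=[a,\chi_m(T)(x)]_{\g}$), which is exactly the argument you abstract. Your write-up only makes explicit what the paper leaves implicit, namely the transfer identity, the legitimacy of each step of the descent, and the guaranteed termination at the monic leading coefficient.
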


\noindent
In the next Proposition, we use {\bf Fitting's Lemma} (see \cite{Jac}) to obtain orthogonal direct sum decompositions for both $(\mathfrak{G},[\cdot,\cdot]_{\mathfrak{G}},B_{\mathfrak{G}})$ and $(\mathfrak{g},[\cdot,\cdot]_{\mathfrak{g}},B_{\mathfrak{g}})$. Besides, these decompositions
would be immediate to obtain if $\mathfrak{g}$ were a $k$-invariant subspace, but in general it is not. Nevertheless, there are conclusions that can be thrown from the initial data $\mathfrak{g}$ to produce a natural invariant decomposition of $\mathfrak{G}$ under an appropriate extension $L$ of $k$. Namely,

\begin{proposicion}\label{mrd1}
Let $k:\mathfrak{g} \rightarrow \mathfrak{G}$ be the injective linear map 
defined in {\bf Lemma \ref{lema 1}}, with $T=\pi_{\mathfrak{g}}\circ\,k:\mathfrak{g} \rightarrow \mathfrak{g}$ as in {\bf Lemma \ref{lema 2}}. Let $D_1,\ldots,D_r \in \Der \mathfrak{g}$ be the $B_{\mathfrak{g}}$-skew-symmetric derivations associated to the $2$-cocycle of the central extension. 
There exists a linear map $L:\mathfrak{G} \rightarrow \mathfrak{G}$, 
satisfying $L(x+v)=k(x)$, for all $x\in\mathfrak{g}$ and all $v\in V$, and the following additional properties: 

\begin{itemize}

\item[(i)] $\Ker L=V$.

\item[(ii)] $L \in \Gamma_{B_{\mathfrak{G}}}(\mathfrak{G})$.

\item[(iii)] There exists a positive integer $m$ such that $\mathfrak{q}=\Im T^m$ and $\mathfrak{n}=\Ker T^m$ are both non-degenerate ideals of $(\mathfrak{g},[\cdot,\cdot]_{\mathfrak{g}},B_{\mathfrak{g}})$ and $\mathfrak{g}=\mathfrak{q} \overset{\perp}{\oplus} \mathfrak{n}$.

\item[(iv)] Both, $\Im L^{m+1}$ and $\Ker L^{m+1}$ are non-degenerate ideals of $(\mathfrak{G},[\cdot,\cdot]_{\mathfrak{G}},B_{\mathfrak{G}})$ and $\mathfrak{G}=\Im L^{m+1} \overset{\perp}{\oplus} \Ker L^{m+1}$.

\item[(v)] $\Ker L^{m+1}=\mathfrak{n} \oplus V$ 
and there exists an invariant metric $\bar{B}_{\mathfrak{q}}$ on $\mathfrak{q}$ making it isometric to $\Im L^{m+1}$.

\item[(vi)] Both ideals, $\mathfrak{q}$ and $\mathfrak{n}$, are invariant under the $r$ skew-symmetric derivations $D_i$ which are inner in $\mathfrak{q}$ for all $1 \leq i \leq r$. 

\item[(vii)] $\mathfrak{g}^m \subseteq \mathfrak{q} $  and $\mathfrak{n} \subseteq C_{m}(\mathfrak{g})$.

\end{itemize}
\end{proposicion}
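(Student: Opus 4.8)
The plan is to construct $L$ explicitly from the vector-space splitting $\mathfrak{G}=\mathfrak{g}\oplus V$ and then to transfer the Fitting decomposition of $T$ on $\mathfrak{g}$ to one of $L$ on $\mathfrak{G}$, reading off the remaining properties from {\bf Lemma \ref{lema 1}} and {\bf Lemma \ref{lema 2}}. First I would define $L:\mathfrak{G}\to\mathfrak{G}$ by $L(x+v)=k(x)$ for $x\in\mathfrak{g}$ and $v\in V$; this is well defined and linear since every element of $\mathfrak{G}$ decomposes uniquely as $x+v$. Property (i) is immediate: as $k$ is injective ({\bf Lemma \ref{lema 1}.(i)}), $L(x+v)=0$ forces $x=0$, so $\Ker L=V$. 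For (ii), the centroid property $L([a,b]_{\mathfrak{G}})=[L(a),b]_{\mathfrak{G}}$ follows from {\bf Lemma \ref{lema 1}.(v)} and the centrality of $V$: writing $a=x+u$, $b=y+v$, one has $L([a,b]_{\mathfrak{G}})=k([x,y]_{\mathfrak{g}})=[k(x),y]_{\mathfrak{G}}=[k(x),y+v]_{\mathfrak{G}}=[L(a),b]_{\mathfrak{G}}$. The $B_{\mathfrak{G}}$-symmetry $B_{\mathfrak{G}}(L(a),b)=B_{\mathfrak{G}}(a,L(b))$ reduces, via {\bf Lemma \ref{lema 1}.(ii)}, to $B_{\mathfrak{g}}(x,y)=B_{\mathfrak{g}}(y,x)$. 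Hence $L\in\Gamma_{B_{\mathfrak{G}}}(\mathfrak{G})$.

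The technical heart is the identity $L^{\ell}(x+v)=k(T^{\ell-1}(x))$ for all $\ell\geq 1$, proved by induction using $k(x)=T(x)+R(x)$ with $R(x)\in V$ and $L|_{V}=0$. From it I read off $\Im L^{\ell}=k(\Im T^{\ell-1})$ and $\Ker L^{\ell}=\Ker T^{\ell-1}\oplus V$. Next I apply {\bf Fitting's Lemma} to the $B_{\mathfrak{g}}$-symmetric centroid $T$: there is an $m$ with $\mathfrak{g}=\Im T^m\oplus\Ker T^m$, with $T$ invertible on $\mathfrak{q}=\Im T^m$ and nilpotent on $\mathfrak{n}=\Ker T^m$. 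Since $T$ is a centroid both summands are ideals, and since $T$ is $B_{\mathfrak{g}}$-symmetric one gets $(\Im T^m)^{\perp}=\Ker T^m$, whence both are non-degenerate and the sum is orthogonal; this is (iii). Because $T$ stabilizes at $m$, the power identity shows that $L$ stabilizes at $m+1$, which gives (iv) with $\Im L^{m+1}=k(\mathfrak{q})$ and $\Ker L^{m+1}=\mathfrak{n}\oplus V$, the non-degeneracy and ideal properties following from $L\in\Gamma_{B_{\mathfrak{G}}}(\mathfrak{G})$ exactly as for $T$. The first half of (v) is then the identification just made.

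For the isometry in (v), the subtle point is that $k|_{\mathfrak{q}}$ is not a Lie morphism (indeed $[k(x),k(y)]_{\mathfrak{G}}=k(T[x,y]_{\mathfrak{g}})$), so I would twist it: set $\psi=k\circ(T|_{\mathfrak{q}})^{-1}:\mathfrak{q}\to k(\mathfrak{q})=\Im L^{m+1}$, which the centroid identities make into a Lie isomorphism, and define $\bar{B}_{\mathfrak{q}}(x,y)=B_{\mathfrak{g}}((T|_{\mathfrak{q}})^{-1}(x),y)$. By the {\bf Note} following {\bf Definition \ref{adequiv}} this is an invariant metric on $\mathfrak{q}$, since $(T|_{\mathfrak{q}})^{-1}\in\Gamma_{B_{\mathfrak{g}}}^{0}(\mathfrak{q})$, and the relation $B_{\mathfrak{G}}(k(a),k(b))=B_{\mathfrak{g}}(T(a),b)$ shows $\psi$ is an isometry. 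For (vi), $D_i$ commutes with $T$ by {\bf Lemma \ref{lema 2}.(vii)}, so $D_i$ preserves $\mathfrak{q}=\Im T^m$ and $\mathfrak{n}=\Ker T^m$; on $\mathfrak{q}$, using $[\mathfrak{q},\mathfrak{n}]=0$ (both are ideals) and $T\circ D_i=\ad_{\mathfrak{g}}(a_i)$, one obtains $D_i|_{\mathfrak{q}}=\ad_{\mathfrak{q}}(b_i)$ with $b_i=(T|_{\mathfrak{q}})^{-1}(q_i)$, where $q_i$ is the $\mathfrak{q}$-component of $a_i$. Finally, (vii) is just {\bf Lemma \ref{lema 2}.(viii)} at $\ell=m$. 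The main obstacle is the bookkeeping in (iv)--(v): getting the power identity $L^{\ell}=k\circ T^{\ell-1}\circ\pi_{\mathfrak{g}}$ precisely right so that the Fitting index of $L$ is exactly $m+1$, and recognizing that the isometry must be the twist $k\circ(T|_{\mathfrak{q}})^{-1}$ rather than $k$ itself.
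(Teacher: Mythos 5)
Your proposal is correct and follows essentially the same route as the paper's proof: the same map $L$, Fitting's Lemma applied to the $B_{\mathfrak{g}}$-symmetric centroid $T$ and transferred to $L$ via the power identity $L^{\ell}=k\circ T^{\ell-1}\circ\pi_{\mathfrak{g}}$, the same twisted map $k\circ (T|_{\mathfrak{q}})^{-1}$ (the paper's $\Lambda=k\circ\sigma^{-1}$) with the metric $\bar{B}_{\mathfrak{q}}(x,y)=B_{\mathfrak{g}}(\sigma^{-1}(x),y)$ for the isometry in (v), and identical arguments for (vi) and (vii). Your reading of the Fitting index $m+1$ directly from $\Im L^{\ell}=k(\Im T^{\ell-1})$ and $\Ker L^{\ell}=\Ker T^{\ell-1}\oplus V$ is only a mild streamlining of the paper's two-sided comparison ($m\le m_1-1$ and $m_1\le m+1$); the underlying construction is the same.
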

\begin{proof}
\noindent Let $\mathfrak{a}=\span_{\mathbb{F}}\{a_1,\ldots,a_r\}$. By \eqref{ex4}, we get $\mathfrak{g}=\Im T+\mathfrak{a}$,
and the non-degeneracy of $B_{\g}$ implies $\Ker T \cap \mathfrak{a}^{\perp}=\{0\}$. 

\smallskip
\noindent (i) From the definition of $L$ and by the fact that $k:\mathfrak{g} \to \mathfrak{G}$ is injective ({\bf Lemma \ref{lema 1}.(i)}), the statement
$\Ker L=V$ is obvious.

\medskip
\noindent (ii) Let $x,y \in \mathfrak{g}$ and $u,v \in V$. Then $L([x+u,y+v]_{\mathfrak{G}})=k([x,y]_{\mathfrak{g}})=[k(x),y]_{\mathfrak{G}}=[L(x+u),y+v]_{\mathfrak{G}}$.

\medskip
\noindent On the other hand, 
$$
\aligned
B_{\mathfrak{G}}(L(x+u),y+v)&=B_{\G}(k(x),y+v)=B_{\mathfrak{G}}(k(x),y)=B_{\mathfrak{g}}(x,y)=B_{\G}(x,k(y))\\
\,&=B_{\mathfrak{G}}(x+u,k(y))=B_{\G}(x+u,L(y+v)),\,\,\forall x,y \in \g,\,\,\forall u,v \in V.
\endaligned
$$ 
\noindent Therefore $L \in \Gamma_{B_{\mathfrak{G}}}(\mathfrak{G})$.

\medskip
\noindent (iii) By {\bf Fitting's Lemma} (see \cite{Jac}), there exists $m \in \mathbb{N}\cup\{0\}$ such that $\mathfrak{g}=\Im T^m \oplus \Ker T^m$, as vector spaces. Let $\mathfrak{q}=\Im T^m$ and $\mathfrak{n}=\Ker T^m$. We observe that $\mathfrak{q}$ and $\mathfrak{n}$ are both non-degenerate ideals of $(\mathfrak{g},[\cdot,\cdot]_{\mathfrak{g}},B_{\mathfrak{g}})$ and $\mathfrak{g}=\mathfrak{q} \overset{\perp}{\oplus} \mathfrak{n}$.

\medskip
\noindent (iv) Let $x \in \mathfrak{g}$, $v \in V$ and $j \in \mathbb{N}$. We affirm that for all $1 \leq j < k$, $L^j(x+v)=T^{j}(x)+\displaystyle{\sum_{i=1}^rB_{\g}(a_i,T^{j-1}(x))v_i}$, for all $x \in \g$ and for all $v \in v$. We claim that the last expression is valid for $k \in \N$, $k>1$. Let $x \in \g$ and $v \in V$, then
$$
\aligned
L^k(x+v)&=L^k(x)=L^{k-1}(L(x))=L^{k-1}(T(x))\\
\,&=T^{k-1}(T(x))+\sum_{i=1}^rB_{\g}(a_i,T^{k-2}(T(x)))v_i=T^k(x)+\overset{r}{\underset{i=1}{\sum}}B_{\mathfrak{g}}(a_i,T^{k-1}(x))v_i.
\endaligned
$$
Let $m_1$ be the minimal positive integer appearing in the Fitting decomposition of $\mathfrak{G}$ associated to $L$. 
Since $L$ belongs to $\Gamma_{B_{\mathfrak{G}}}(\mathfrak{G})$,
it follows that $\Im L^{m_1}$ and $\Ker L^{m_1}$ are 
non-degenerate ideals of $(\mathfrak{G},[\cdot,\cdot]_{\mathfrak{G}},B_{\mathfrak{G}})$
and $\mathfrak{G}=\Im L^{m_1} \overset{\perp}\oplus \Ker L^{m_1}$.

\smallskip
\noindent 
We now want to show that $m_1=m+1$, $\Im L^{m_1}$ is isometric to $\mathfrak{q}$ and $\Ker(L^{m_1})=\mathfrak{n} \oplus V$. We claim first that $m \leq m_1-1$. Let $x \in \Ker T^{m_1}$, then $L^{m_1}(x)\in V$. 
Whence, $L^{m_1+1}(x)=0$, which implies $x \in \Ker L^{m_1+1}=\Ker L^{m_1}$, thus $T^{m_1-1}(x) \in \Ker T \cap \mathfrak{a}^{\perp}=\{0\}$, so $x \in \Ker T^{m_1-1}$. This proves that $\Ker T^{m_1}=\Ker T^{m_1-1}$. Therefore, $\Im T^{m_1}=\Im T^{m_1-1}$ and by the minimality of $m$, we conclude that $m \leq m_1-1$.

\smallskip
\noindent 
We shall now prove that $m_1 \leq m+1$. If $y \in \Im L^{m+1}$, there exists $x \in \mathfrak{g}$ such that $y=L^{m+1}(x)=T^{m+1}(x)+\overset{r}{\underset{i=1}{\sum}}B_{\mathfrak{g}}(a_i,T^{m}(x))v_i$. Then $T^{m+1}(x) \in \Im T^{m+1}=\Im T^{m+2}$.
Therefore, there exists $x' \in \mathfrak{g}$, such that $T^{m+1}(x)=T^{m+2}(x')$ and $x-T(x') \in \Ker T^{m+1}=\Ker T^m$.
Thus, $T^m(x)=T^{m+1}(x')$. Finally, $y$ can be written as $y=T^{m+2}(x')+\overset{r}{\underset{i=1}{\sum}}B_{\mathfrak{g}}(a_i,T^{m+1}(x'))v_i=L^{m+2}(x')$. This implies that $y \in \Im L^{m+2}$. Whence, $\Im L^{m+1} \subset \Im L^{m+2}$ and therefore,
$\Im L^{m+1}=\Im L^{m+2}$. Since $L \in \Gamma_{B_{\mathfrak{G}}}(\mathfrak{G})$, it follows that 
$\Ker L^{m+1}=\Ker L^{m+2}$, so that $m_1 \leq m+1$. Therefore, $m_1=m+1$. 

\medskip
\noindent 
(v) Now, $x+v \in \Ker L^{m+1}$ implies $T^{m+1}(x)=0$, so $x \in \Ker T^{m+1}=\Ker T^m$ and $\Ker L^{m+1} \subseteq \mathfrak{n} \oplus V$. On the other hand, if $x \in \Ker T^m$, then $L^{m}(x) \in V$, so $L^{m+1}(x)=0$. This proves that $\Ker T^m \oplus V \subseteq \Ker L^{m+1}$. Therefore $\Ker L^{m+1}=\Ker T^m \oplus V$.

\smallskip
\noindent 
Let us now consider the restriction $\sigma=T|_{\mathfrak{q}}:\mathfrak{q} \rightarrow \mathfrak{q}$.
By {\bf Fitting\'{}s Lemma}, $\sigma$ is invertible. 
Consider the assignment $\Lambda:
\mathfrak{q} \to V^\perp$, defined by $\Lambda(x):=k\circ\sigma^{-1}(x)$, for all $x \in \mathfrak{q}$. 
For any $x,y \in \mathfrak{q}$, we have $\sigma^{-1}([x,y]_{\mathfrak{g}})=[\sigma^{-1}(x),y]_{\mathfrak{g}}$, 
so that $\Lambda([x,y]_{\mathfrak{g}})=[\Lambda(x),y]_{\mathfrak{G}}=[x,y]_{\mathfrak{G}}$.
On the other hand, since $V \subseteq C(\mathfrak{G})$, it is clear that $[\Lambda(x),\Lambda(y)]_{\mathfrak{G}}=[x,y]_{\mathfrak{G}}$.
%$$
%[\Lambda(x)),\Lambda(y))]_{\mathfrak{G}}=[x,y]_{\mathfrak{G}}.
%$$ 
Thus, $\Lambda([x,y]_{\mathfrak{g}})=[\Lambda(x),\Lambda(y)]_{\mathfrak{G}}$ for all $x,y \in \mathfrak{q}$, 
which means that $\Lambda:\mathfrak{q}\to V^{\perp}$ is an injective Lie algebra morphism.
It only remains to prove that $\Im \Lambda=\Im L^{m+1}$.
Since $\sigma(\mathfrak{q})=\mathfrak{q}$ and $\mathfrak{q}=\mathfrak{n}^{\perp}$ (as subspaces of $\mathfrak{g}$),
 {\bf Lemma \ref{lema 1}.(ii)} implies that $B_{\mathfrak{G}}(\Lambda(x),y)=B_{\mathfrak{g}}(\sigma^{-1}(x),y)=0$,
for any $x \in \mathfrak{q}$ and $y \in \mathfrak{n}$.
Thus, $\Lambda(x) \in (\mathfrak{n} \oplus V)^{\perp}=\Im L^{m+1}$, for all $x \in \mathfrak{q}$.
Therefore, $\Im \Lambda \subseteq \Im L^{m+1}$.  
By the direct sum decompositions $\mathfrak{G}=\Im L^{m+1} \oplus (\mathfrak{n} \oplus V)$ and $\mathfrak{g}=\mathfrak{q} \oplus \mathfrak{n}$, it follows that $\dim_{\mathbb{F}}\Im L^{m+1}=\dim_{\mathbb{F}}\mathfrak{q}$.
Thus, $\Im \Lambda=\Im L^{m+1}$ and $\Lambda$ is an isomorphism of Lie algebras between $\mathfrak{q}$ and $\Im L^{m+1}$.

\smallskip
\noindent 
Since $\mathfrak{q}=\operatorname{Im}T^m$, and $\mathfrak{g}=\mathfrak{q}\overset{\perp}\oplus\mathfrak{n}$,
it follows that $B_{\mathfrak{g}}$ restricts to a non-degenerate invariant metric on $(\mathfrak{q},[\cdot,\cdot]_{\mathfrak{g}}|_{\mathfrak{q} \times \mathfrak{q}})$. Moreover, the invertible map $\sigma^{-1}\in\operatorname{End}(\mathfrak{q})$ satisfies
$B_{\mathfrak{g}}(\sigma^{-1}(x),y)=B_{\mathfrak{g}}(x,\sigma^{-1}(y))$, for any pair $x,y\in\mathfrak{q}$
and it is $\operatorname{ad}_{\mathfrak{q}}$-equivariant.
Therefore, we define the metric $\bar{B}_{\mathfrak{q}}$ on $\mathfrak{q}$ by means of
$\bar{B}_{\mathfrak{q}}(x,y)=B_{\mathfrak{g}}(\sigma^{-1}(x),y)$, for all $x,y \in \mathfrak{q}$, which is in general different from $B_{\mathfrak{q}}$.
Since $T(\sigma^{-1}(y))=y$ for all $y \in \mathfrak{q}$, it follows that,
$$
B_{\mathfrak{G}}(\Lambda(x),\Lambda(y))=B_{\mathfrak{G}}(\Lambda(x),y)
=B_{\mathfrak{g}}(\sigma^{-1}(x),y) =\bar{B}_{\mathfrak{q}}(x,y),
$$ 
for all $x,y \in \mathfrak{q}$. Therefore, $\Lambda:\mathfrak{q} \rightarrow \Im L^{m+1}$
is an isometry between 
$(\mathfrak{q},[\cdot,\cdot]_{\mathfrak{g}}|_{\mathfrak{q} \times \mathfrak{q}},\bar{B}_{\mathfrak{q}})$ and 
$(\Im L^{m+1},[\cdot,\cdot]_{\mathfrak{G}}|_{\Im L^{m+1} \times \Im L^{m+1}},B_{\mathfrak{G}}|_{\Im L^{m+1} \times \Im L^{m+1}})$.

\medskip
\noindent (vi) By {\bf Lemma \ref{lema 2}.(vii)}, it follows that both $\mathfrak{q}$ and $\mathfrak{n}$ are invariant under the $B_{\mathfrak{g}}$-skew-symmetric derivations $D_1,\ldots,D_r \in \Der \mathfrak{g}$. Let us consider the component $p_i \in \mathfrak{q}$ of $a_i \in \mathfrak{g}$; \emph{ie}, $a_i=p_i+n_i$, where $n_i \in \mathfrak{n}$, for $1 \leq i \leq r$. Since $\sigma=T|_{\mathfrak{q}}$ is invertible, there exists $q_i \in \mathfrak{q}$, such that, $p_i=T(q_i)$, for each $1 \leq i \leq r$. Let $x \in \mathfrak{q}$. Then, there exists a unique $y \in \mathfrak{q}$, such that $x=T(y)$, and by {\bf Lemma \ref{lema 2}.(vii)}, we conclude that, $D_i(x)=D_i(T(y))=[a_i,y]_{\mathfrak{g}}=[T(q_i)+n_i,y]_{\mathfrak{g}}=[q_i,T(y)]_{\mathfrak{q}}=[q_i,x]_{\mathfrak{q}}$.
%$$
%D_i(x)=D_i(T(y))=[a_i,y]_{\mathfrak{g}}=[T(q_i)+n_i,y]_{\mathfrak{g}}=[q_i,T(y)]_{\mathfrak{q}}%=[q_i,x]_{\mathfrak{q}}.
%$$
That is, $D_i|_{\mathfrak{q}}=\ad_{\mathfrak{q}}(q_i)$, for some $q_i\in\mathfrak{q}$.

\medskip
\noindent (vii) By {\bf Lemma \ref{lema 2}.(viii)}, we have $\mathfrak{g}^m \subseteq \mathfrak{q}$ and $\mathfrak{n} \subseteq C_m(\mathfrak{g})$. Therefore, $\mathfrak{n}$ is a non-degenerate nilpotent ideal of $(\mathfrak{g},[\cdot,\cdot]_{\mathfrak{g}},B_{\mathfrak{g}})$. 
\end{proof}

\noindent As an application of the results exhibited in the above Lemmas, we state the following criterion to determine the conditions under which $V$ either degenerates or not.

\begin{proposicion}\label{teorema de estructura 2}
Let $(\mathfrak{g},[\cdot,\cdot]_{\mathfrak{g}},B_{\mathfrak{g}})$ be a quadratic Lie algebra and let $V$ be an $r$-dimensional vector space. Let $(\mathfrak{G},[\cdot,\cdot]_{\mathfrak{G}})$ be a central extension of $(\mathfrak{g},[\cdot,\cdot]_{\mathfrak{g}})$ by $V$ with $D_1,\ldots,D_r \in \Der\mathfrak{g}$ being the $B_{\mathfrak{g}}$-skew-symmetric derivations associated to the $2$-cocycle of the extension. Let $T \in \End_{\mathbb{F}}\mathfrak{g}$ be the linear map defined in {\bf Lemma \ref{lema 2}}. Suppose there exists an invariant metric $B_{\mathfrak{G}}$ on $(\mathfrak{G},[\cdot,\cdot]_{\mathfrak{G}})$.
\begin{itemize}

\item[(i)] 
$T$ is invertible if and only if $V \cap V^{\perp}=\{0\}$ (\emph{i.e.} $V$ is a non-degenerate ideal of $(\mathfrak{G},[\cdot,\cdot]_{\mathfrak{G}},B_{\mathfrak{G}})$). 
If \,$T$ is invertible, it can be used to define an invariant metric $\bar{B}_{\mathfrak{g}}$ on $(\mathfrak{g},[\cdot,\cdot]_{\mathfrak{g}})$, 
through $\bar{B}_{\mathfrak{g}}(x,y)=B_{\mathfrak{g}}(T^{-1}(x),y)$,
making 
$(\mathfrak{G},[\cdot,\cdot]_{\mathfrak{G}},B_{\mathfrak{G}})$ isometric to
$(\mathfrak{g},[\cdot,\cdot]_{\mathfrak{g}},\bar{B}_{\mathfrak{g}}) \overset{\perp}{\oplus} (V,B_{\mathfrak{G}}|_{V \times V})$.

\item[(ii)] 
Suppose that at least one of the $B_{\g}$-skew -symmetric derivations $D_1,...,D_r$, associated to the 2-cocycle of the central extension is not inner. Then $V \cap V^{\perp} \neq \{0\}$ if and only if there exists some $m \in \mathbb{N}$, such that, $C_m(\mathfrak{g}) \neq \{0\}$, and,
$$
\mathfrak{g} = \mathfrak{g}^m
 \overset{\perp}{\oplus} C_m(\mathfrak{g}),
$$
is a direct orthogonal sum of quadratic Lie algebras, 
with $(\mathfrak{g}^m,[\cdot,\cdot]_{\mathfrak{g}}\vert_{
\mathfrak{g}^m\times \mathfrak{g}^m})$
being a perfect Lie algebra and $C_m(\mathfrak{g})$ a nilpotent one.

\end{itemize}

\end{proposicion}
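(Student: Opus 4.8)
The plan is to read everything off the maps $h:\mathfrak{G}\to\mathfrak{g}$, $k:\mathfrak{g}\to\mathfrak{G}$ and the $B_{\mathfrak{g}}$-symmetric centroid $T=\pi_{\mathfrak{g}}\circ k$ produced in \textbf{Lemma \ref{lema 1}} and \textbf{Lemma \ref{lema 2}}, together with the Fitting decomposition of \textbf{Proposition \ref{mrd1}}. Recall that $k(x)=T(x)+R(x)$ with $R(x)\in V$, that $\Im k=V^{\perp}$, and that $k$ is injective. For the equivalence in (i) I would argue by contraposition on each side. If $T$ is not invertible, pick $0\neq x\in\Ker T$; then $k(x)=R(x)\in V$, while $k(x)\in\Im k=V^{\perp}$ by \textbf{Lemma \ref{lema 1}.(iii)}, and $k(x)\neq 0$ since $k$ is injective, so $0\neq k(x)\in V\cap V^{\perp}$. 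Conversely, if $0\neq w\in V\cap V^{\perp}=V\cap\Im k$, write $w=k(x)$ with $x\neq 0$; comparing $\mathfrak{g}$- and $V$-components in $\mathfrak{G}=\mathfrak{g}\oplus V$ and using $w\in V$ forces $T(x)=\pi_{\mathfrak{g}}(w)=0$, whence $\Ker T\neq\{0\}$. This gives $T$ invertible $\iff V\cap V^{\perp}=\{0\}$.

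Assume now $T$ is invertible. Since $T\in\Gamma_{B_{\mathfrak{g}}}(\mathfrak{g})$, also $T^{-1}\in\Gamma_{B_{\mathfrak{g}}}^{0}(\mathfrak{g})$, so by the \textbf{Note} following \textbf{Definition \ref{adequiv}} the form $\bar{B}_{\mathfrak{g}}(x,y)=B_{\mathfrak{g}}(T^{-1}(x),y)$ is an invariant metric on $\mathfrak{g}$. Invertibility of $T$ means its Fitting index is $m=0$, i.e. $\mathfrak{q}=\Im T^{0}=\mathfrak{g}$ and $\mathfrak{n}=\Ker T^{0}=\{0\}$. Feeding this into \textbf{Proposition \ref{mrd1}.(iv)--(v)} yields $\mathfrak{G}=\Im L\overset{\perp}{\oplus}\Ker L$ with $\Ker L=V$, and the map $\Lambda=k\circ T^{-1}$ is at once a Lie-algebra isomorphism and an isometry of $(\mathfrak{g},\bar{B}_{\mathfrak{g}})$ onto $(\Im L,B_{\mathfrak{G}}|_{\Im L})$. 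Combining these two facts gives the claimed isometry $(\mathfrak{G},B_{\mathfrak{G}})\cong(\mathfrak{g},\bar{B}_{\mathfrak{g}})\overset{\perp}{\oplus}(V,B_{\mathfrak{G}}|_{V})$, which finishes (i).

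For (ii) the first point is that the hypothesis already fixes the degeneracy of $V$. By \textbf{Proposition \ref{mrd1}.(vi)} each $D_i$ restricts to an inner derivation of $\mathfrak{q}$; hence if $\mathfrak{n}=\Ker T^{m}=\{0\}$ held, then $\mathfrak{q}=\mathfrak{g}$ and every $D_i=\ad_{\mathfrak{g}}(q_i)$ would be inner, contradicting the assumption that some $D_i$ is not inner. Therefore $\mathfrak{n}\neq\{0\}$, so $T$ is not invertible and, by (i), $V\cap V^{\perp}\neq\{0\}$; this settles the ($\Leftarrow$) implication of the equivalence outright. For ($\Rightarrow$) I start from the Fitting data of \textbf{Proposition \ref{mrd1}}: $\mathfrak{g}=\mathfrak{q}\overset{\perp}{\oplus}\mathfrak{n}$ is an orthogonal sum of non-degenerate ideals, $\mathfrak{n}$ is nilpotent, and $\mathfrak{g}^{m}\subseteq\mathfrak{q}$, $\mathfrak{n}\subseteq C_{m}(\mathfrak{g})$ by \textbf{Lemma \ref{lema 2}.(viii)}; since $\mathfrak{n}\neq\{0\}$ we get $C_{m}(\mathfrak{g})\neq\{0\}$. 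As $\mathfrak{q}$ and $\mathfrak{n}$ are mutually commuting ideals, $\mathfrak{g}^{m}=\mathfrak{q}^{m}\oplus\mathfrak{n}^{m}$ and $\mathfrak{n}^{m}\subseteq\mathfrak{q}\cap\mathfrak{n}=\{0\}$, so $\mathfrak{g}^{m}=\mathfrak{q}^{m}$. Using the duality $(\mathfrak{g}^{\ell})^{\perp}=C_{\ell}(\mathfrak{g})$ for quadratic Lie algebras (already invoked in the proof of \textbf{Lemma \ref{lema 2}.(viii)}), the whole decomposition $\mathfrak{g}=\mathfrak{g}^{m}\overset{\perp}{\oplus}C_{m}(\mathfrak{g})$, with $\mathfrak{g}^{m}$ perfect and $C_{m}(\mathfrak{g})$ nilpotent, collapses to the single assertion $\mathfrak{g}^{m}=\mathfrak{q}$, that is, to the \emph{perfectness of} $\mathfrak{q}$ (then $\mathfrak{g}^{m}=\mathfrak{q}^{m}=\mathfrak{q}$, $C_{m}(\mathfrak{g})=\mathfrak{q}^{\perp}=\mathfrak{n}$, and nilpotency and perfectness follow formally).

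The decisive obstacle is exactly this last step: showing that the non-degenerate ideal $\mathfrak{q}=\Im T^{m}$, carrying the invertible self-adjoint centroid $\sigma=T|_{\mathfrak{q}}$, is perfect, equivalently that $C(\mathfrak{q})=([\mathfrak{q},\mathfrak{q}])^{\perp}=\{0\}$. I expect that the purely centroid-theoretic data of \textbf{Proposition \ref{mrd1}} is \emph{not} by itself enough to force this, and that one must bring in the non-innerness hypothesis together with the existence of $B_{\mathfrak{G}}$. My intended route is to use the relations $\ad_{\mathfrak{g}}(x)=T\circ\rho(x)=\rho(x)\circ T$ of \textbf{Lemma \ref{lema 2}.(vii)} — which show that every $\rho(x)$ commutes with $T$ and hence preserves $\mathfrak{q}$ — and then to argue that a non-trivial centre $C(\mathfrak{q})\neq\{0\}$ would, via \textbf{Lemma \ref{lema 2}.(ix)} and \textbf{Corollary \ref{corolario internas}}, force a common kernel configuration among the $D_i$ rendering each of them inner, in contradiction with the hypothesis. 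Making this contradiction rigorous — i.e. verifying that a centre in $\mathfrak{q}$ is genuinely incompatible with the invariant metric on the extension once some $D_i$ is non-inner — is where I anticipate the real work lies.
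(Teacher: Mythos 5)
Your part (i) is correct, and the equivalence is argued more directly than in the paper: you read it off from $k(x)=T(x)+R(x)$, $\operatorname{Im}k=V^{\perp}$ and the injectivity of $k$, whereas the paper deduces surjectivity of $T$ from a dual basis of $V$ and \textbf{Lemma \ref{lema 2}.(vi)}; the isometry part is then obtained exactly as in the paper, from \textbf{Proposition \ref{mrd1}.(iii)--(v)}. Your $(\Leftarrow)$ direction of (ii) is also correct and is a genuine shortcut: \textbf{Proposition \ref{mrd1}.(vi)} together with the non-innerness hypothesis forces $\mathfrak{n}=\operatorname{Ker}T^{m}\neq\{0\}$, hence $T$ is not invertible and $V\cap V^{\perp}\neq\{0\}$ by (i). Likewise, your reduction of $(\Rightarrow)$ to the single claim that $\mathfrak{q}=\operatorname{Im}T^{m}$ is perfect is valid, using $\mathfrak{g}^{m}\subseteq\mathfrak{q}$, $[\mathfrak{q},\mathfrak{n}]_{\mathfrak{g}}=\{0\}$ and the duality $(\mathfrak{g}^{\ell})^{\perp}=C_{\ell}(\mathfrak{g})$.

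The gap is that this last claim is never proved, and you concede as much; it is precisely where the whole difficulty of the proposition sits, and the route you sketch does not close it. \textbf{Lemma \ref{lema 2}.(ix)} and \textbf{Corollary \ref{corolario internas}} require the very special hypothesis $\operatorname{Ker}D_{1}=\cdots=\operatorname{Ker}D_{r}$, and nothing in the configuration $C(\mathfrak{q})\neq\{0\}$ produces equal kernels, so no contradiction can be extracted in the way you indicate. What the paper actually does is different in kind: it first \emph{normalizes the $2$-cocycle within its cohomology class}. A minimality argument on the family of subalgebras on which each $D_{i}$ agrees with an inner derivation produces $x_{1},\ldots,x_{r}\in\mathfrak{g}$ with $\bigcap_{i=1}^{r}\operatorname{Ker}\bigl(D_{i}-\operatorname{ad}_{\mathfrak{g}}(x_{i})\bigr)=\{0\}$; it is in ruling out the degenerate alternatives of this argument (all the modified kernels equal, or $T=0$) that \textbf{Lemma \ref{lema 2}.(ix)} and the non-innerness hypothesis actually enter. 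By \textbf{Proposition \ref{escision}} and \textbf{Proposition \ref{proposicion 1}} the modified cocycle defines an isomorphic, isometric extension, so one may assume $\bigcap_{i}\operatorname{Ker}D_{i}=\{0\}$. Only after this normalization does $T$ compute the two series exactly: the product $[x,y]_{\Delta}=h([x,y]_{\mathfrak{G}})$ then has trivial center, which yields $\operatorname{Im}T=[\mathfrak{g},\mathfrak{g}]_{\mathfrak{g}}$, $\operatorname{Ker}T=C(\mathfrak{g})$, and inductively $\operatorname{Im}T^{\ell}=\mathfrak{g}^{\ell}$, $\operatorname{Ker}T^{\ell}=C_{\ell}(\mathfrak{g})$; the Fitting decomposition then literally reads $\mathfrak{g}=\mathfrak{g}^{m}\overset{\perp}{\oplus}C_{m}(\mathfrak{g})$, and the triviality of the center of $\mathfrak{g}^{m}$ (equivalently its perfectness) is obtained from the quotient algebra $\bigl(\mathfrak{g}/C_{m}(\mathfrak{g}),[\cdot,\cdot]_{\Delta}\bigr)$. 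Without some such normalization step, the purely Fitting-theoretic data of \textbf{Proposition \ref{mrd1}} does not, as you yourself suspect, force $C(\mathfrak{q})=\{0\}$; so your proof of (ii)$(\Rightarrow)$ is incomplete at its decisive point.
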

\begin{proof}
\noindent (i) If $T$ is invertible then the result follows from {\bf Proposition \ref{mrd1}.(iii)}, 
{\bf (iv)} and {\bf (v)}. Suppose that $V$ is non-degenerate.  Then there exists a basis $\{v'_1,\ldots,v'_r \}$ of $V$ such that $B_{\mathfrak{G}}(v_i,v'_j)=\delta_{ij}$, $i,j=1,\ldots,r$. It follows
from {\bf Lemma \ref{lema 2}.(vi)} that $T\circ h(v'_i)=-a_i$, for all $1 \leq i \leq r$. By the decomposition
$\mathfrak{g}=\Im T+\mathfrak{a}$ of {\bf Lemma \ref{lema 2}.(vi)}, it is trivial that
$\mathfrak{g}=\Im T$, which implies that $T$ is surjective. By {\bf Proposition \ref{mrd1}.(iii)}, {\bf (iv)} and {\bf (v)}, if $T$ is invertible, then $(\mathfrak{G},[\cdot,\cdot]_{\mathfrak{G}},B_{\mathfrak{G}})$ is isometric to the orthogonal direct sum of $(\mathfrak{g},[\cdot,\cdot]_{\mathfrak{g}},\bar{B}_{\mathfrak{g}})$ and $(V,B_{\mathfrak{G}}|_{V \times V})$.
\medskip

\noindent (ii) According of the hypotheses of the statement, we assume that at least one of the $B_{\g}$-skew-symmetric derivations $D_i$ is not inner. We claim that there are $x_1,\ldots , x_r \in \g$ such that $\displaystyle{\cap_{i=1}^r\Ker(D_i-\ad_{\g}(x_i))=\{0\}}$. Indeed, for each $D_i$, $1 \le i \le r$, we define the following family of subalgebras of $(\g,[\cdot,\cdot]_{\g})$:
$$
\mathcal{F}_i=\{S \subset \g\,|\,S \neq \{0\}\mbox{ is a subalgebra and }D_{i}|_S=\ad_{\g}(x)|_S, \mbox{ for some }x \in \g \}.
$$
If $\mathcal{F}_{i}$ is empty, then $\Ker(D_i)=\{0\}$. In fact, assuming that $\Ker(D_i) \neq 0$, we may choose
a non-zero $x \in \Ker(D_i)$. Obviously, $D_i(x)=[x,x]_{\g}=0$, and $\Bbb F\,x \in \mathcal{F}_{i}$. 
On the other hand, if
$\Ker(D_i)=0$, for some $i$, it is obvious that $\displaystyle{\cap_{j=1}^r(\Ker(D_j))}=\{0\}$. Thus, we assume that $\mathcal{F}_i$ is non-empty for all $i$. Let $S_i \in \mathcal{F}_i$ be a minimal subalgebra. By the definition of $S_i$, there exists a non-zero $x_i \in \g$, such that $D_i|_{S_i}=\ad_{g}(x_i)|_{S_i}$, for all $1 \le i \le r$. 
Now, for each $i$, define 
$D^{\prime}_i=D_i-\ad_{\g}(x_i)$. 
Clearly, $\Ker(D_i^\prime)=S_i$, and $D_i^\prime$ is still a $B_{\frak{g}}$-skew-symmetric derivation of $\g$.
Let $S=\displaystyle{\cap_{i=1}^r\Ker(D^{\prime}_i)}$. Choose some $x \in S$. Then, $D_i(x)=[x_i,x]_{\g}$ for all $1 \le i \le r$. Since $S \subset S_i$, for all $i$, the minimality property of $S_i$ implies that
either $S=\{0\}$ or $S=S_i$, for all $1 \le i \le r$.
Suppose $S=S_i$ for all $i$. 
By {\bf Proposition \ref{escision}} and {\bf Proposition \ref{proposicion 1}}, 
the 2-cocycle $\theta$ associated to the $D_{i}$'s is  equivalent to the 2-cocycle 
$\theta^{\prime}$ associated to the $D^{\prime}_i$'s, in the sense of the central 
extensions arising from these 2-cocycles are naturally isomorphic. 
Let $[\cdot,\cdot]^{\prime}_{\G}$ be the Lie bracket on $\G=\g \oplus V$, 
associated to the the cocycle $\theta^\prime$ 
Therefore, there exists an invariant metric $B_{\G}^{\prime}$ in $(\G,[\cdot,\cdot]^{\prime}_{\G})$, induced by $B_{\G}$, for which the quadratic Lie algebras $(\G,[\cdot,\cdot]_{\G},B_{\G})$ and $(\G,[\cdot,\cdot]^{\prime}_{\G},B^{\prime}_{\G})$ are isometric. Now, by {\bf Lemma \ref{lema 1}.(ii)}, there exists a linear map $k^{\prime}:\g \to \G$, 
such that $B_{\g}(x,y)=B_{\G}^{\prime}(k^{\prime}(x),y)$, for all $x,y \in \g$. 
Let $T^{\prime}=\pi_{\g} \circ k^{\prime}:\g \to \g$ be the self-adjoint centroid appearing in {\bf Lemma \ref{lema 2}}. 
Then, {\bf Lemma \ref{lema 2}.(viii)} implies that there exist elements $a^{\prime \prime}_i$, $1 \le i \le r$, such that $T^{\prime} \circ D_{i}^{\prime}=D_i^{\prime} \circ T^{\prime}=\ad_{\g}(a_i^{\prime \prime})$. 
If $T^{\prime} \neq 0$, then 
{\bf Lemma \ref{lema 2}.(ix)} implies that all the $D^{\prime}_i$'s are inner derivations, because $\Ker(D^{\prime}_1)= \cdots =\Ker(D_r^{\prime})$. Consequently, 
each $D_i$ is itself inner, which contradicts our assumption.
On the other hand, if $T^{\prime}=0$, then $k^{\prime}([x,y]_{\g})=0$, for all $x,y \in \g$. Since $k^{\prime}$ is injective ({\bf Lemma \ref{lema 1}.(i)}), 
it follows that $(\g,[\cdot,\cdot]_{\g})$ is Abelian 
and $D^{\prime}_i=D_i$, for each $i$. Since $D_i(a_i)=0$, for all $i$, then $D_i(a_j)=0$, for all $i,j$, because $\Ker(D_1)=\ldots =\Ker(D_r)$. Thus,
$$
D_j(x)=h([a_j,x]_{\G})=\sum_{i=1}^rB_{\g}(D_i(a_j),x)h(v_i)=0,\quad \forall x \in \g,\quad \forall 1 \le j \le r.
$$ 
Whence $D_j=0$, for all $j$, contradicting the hypothesis that at least one $B_{\g}$-skew-symmetric derivation is not inner. Therefore, $S=\displaystyle{\cap_{i=1}^r\Ker(D_i-\ad_{g}(x_i))}=\{0\}$. 
As we said, the 2-cocycle associated to the $B_{\g}$-skew-symmetric derivations $D^{\prime}_i$'s
yields an isomorphic central extension as that obtained from  the 2-cocycle associated
to the $D_i$'s ({\bf Proposition \ref{escision}} and {\bf Proposition \ref{proposicion 1}})
we may conclude, with no loss of generality that  $\displaystyle{\cap_{i=1}^r\Ker(D_i)}=\{0\}$.
\smallskip

\noindent Let us consider the skew-symmetric product $[\cdot,\cdot]_{\Delta}:\mathfrak{g} \times \mathfrak{g} \rightarrow \mathfrak{g}$, defined in {\bf Lemma \ref{lema 2}.(iii)}, as $[x,y]_{\Delta}=\rho(x)(y)=h([x,y]_{\mathfrak{G}})$, for all $x,y \in \mathfrak{g}$. Using the fact $\displaystyle{\cap_{i=1}^r\Ker(D_i)}=\{0\}$, we shall now prove that if $x \in \mathfrak{g}$ satisfies $[x,y]_{\Delta}=0$ for all $y \in \mathfrak{g}$, then $x=0$. Indeed, let $x \in \mathfrak{g}$, satisfy $[x,y]_{\Delta}=h([x,y]_{\mathfrak{G}})=0$, for all $y \in \mathfrak{g}$. Then, $[x,y]_{\mathfrak{G}} \in \Ker(h)=\mathfrak{g}^{\perp}$, for all $y \in \mathfrak{g}$. Since $B_{\mathfrak{G}}$ is invariant under the Lie bracket $[\cdot,\cdot]_{\mathfrak{G}}$, and $V$ is contained in the center of $(\mathfrak{G},[\cdot,\cdot]_{\mathfrak{G}})$, then $[x,y]_{\mathfrak{G}} \in V^{\perp}$. Since $B_{\mathfrak{G}}$ is non-degenerate, and $[x,y]_{\mathfrak{G}}=[x,y]_{\mathfrak{g}}+\displaystyle{\small{\sum_{i=1}^r}B_{\mathfrak{g}}(D_i(x),y)v_i}$, for all $y \in \mathfrak{g}$, the condition $[x,y]_{\mathfrak{G}}=0$, for all $y \in \mathfrak{g}$, implies that, $[x,y]_{\mathfrak{g}}=0$ and $B_{\mathfrak{g}}(D_i(x),y)=0$, for all $y \in \mathfrak{g}$,
and all $1 \leq i \leq r$. Since $B_{\mathfrak{g}}$ is non-degenerate, then $D_i(x)=0$, for $1 \leq i \leq r$. Thus, $x \in C(\mathfrak{g}) \cap (\displaystyle{\cap_{i=1}^r\Ker(D_i)})=\{0\}$. This argument thus proves that the center of the skew-symmetric algebra $(\mathfrak{g},[\cdot,\cdot]_{\Delta})$ is zero.

\smallskip
\noindent 
On the other hand, {\bf Lemma \ref{lema 2}.(iii)} shows that $B_{\mathfrak{g}}([x,y]_{\Delta},z)=B_{\mathfrak{g}}(\rho(x)(y),z)=-B_{\mathfrak{g}}(y,\rho(x)(z))=-B_{\mathfrak{g}}(y,[x,z]_{\Delta})$.
%$$
%B_{\mathfrak{g}}([x,y]_{\Delta},z)=B_{\mathfrak{g}}(\rho(x)(y),z)=-B_{\mathfrak{g}}(y,\rho(x)(z))=-%B_{\mathfrak{g}}(y,[x,z]_{\Delta}).
%$$ 
So, $B_{\mathfrak{g}}$ is invariant under the skew-symmetric product $[\cdot,\cdot]_{\Delta}$. We have already known that the center of $(\mathfrak{g},[\cdot,\cdot]_{\Delta})$ is zero, so $\mathfrak{g}=[\mathfrak{g},\mathfrak{g}]_{\Delta}$. Then, for each $x\in\mathfrak{g}$, there is a positive integer $\ell$ and $x_i,y_i \in \mathfrak{g}$, ($1\le i\le \ell$), 
such that, $x=\overset{\ell}{\underset{j=1}{\sum}} [x_j,y_j]_{\Delta}=\overset{\ell}{\underset{j=1}{\sum}} h([x_j,y_j]_{\mathfrak{G}})$.
%$$
%x=\overset{\ell}{\underset{j=1}{\sum}} [x_j,y_j]_{\Delta}=\overset{\ell}{\underset{j=1}{\sum}} h([x_j,y_j]_{\mathfrak{G}}).
%$$
By {\bf Lemma \ref{lema 2}.(vi)}, we obtain,
$$
\aligned
\sum_{j=1}^{\ell}[x_j,y_j]_{\mathfrak{g}}
&=\sum_{j=1}^{\ell}T\circ h([x_i,y_j]_{\mathfrak{G}})+\sum_{j=1}^{\ell} \sum_{i=1}^rB_{\mathfrak{G}}([x_j,y_j]_{\mathfrak{G}},v_i)a_i
\\
&=\sum_{j=1}^{\ell}T\circ h([x_i,y_j]_{\mathfrak{G}})=T(x).
\endaligned
$$
This implies that $\Im T \subset [\mathfrak{g},\mathfrak{g}]_{\mathfrak{g}}$. Therefore, $\Im T=[\mathfrak{g},\mathfrak{g}]_{\mathfrak{g}}$
and $\Ker T=C(\mathfrak{g})$. From {\bf Lemma \ref{lema 2}.(viii)},
and an induction argument we may conclude that $\mathfrak{g}^{\ell}=\Im T^{\ell}$ and $\Ker T^{\ell}=C_{\ell}(\mathfrak{g})$ for each $\ell \in \mathbb{N}$. Indeed, suppose that $\Im T^{\ell-1}=\g^{\ell-1}$, then 
$$
\aligned 
T^{\ell}(\g)=&T(T^{\ell-1}(\g))=T([\g,\g^{\ell-2}]_{\g})=[T(\g),\g^{\ell-2}]_{\g}\\
\,&=[\g^1,\g^{\ell-2}]_{\g}=[[\g,\g]_{\g},\g^{\ell-2}]_{\g} \subset [\g,[\g,\g^{\ell-2}]_{\g}]_{\g}\\
\,&=[\g,\g^{\ell-1}]_{\g}=\g^{\ell}.
\endaligned
$$ 
Since $\Ker(T^{\ell})^{\perp}=\Im(T^{\ell})$ and ${(\g^{\ell})}^{\perp}=C_{\ell}(\g)$, for all $\ell \in \N$, one obtains $\Ker T^{\ell}=C_{\ell}(\mathfrak{g})$, for all $\ell \in \N$. Let $m$ be the positive integer that gives the Fitting decomposition for $T$; {\it ie\/,} $\mathfrak{g}=\Im T^m \oplus \Ker T^m$. By {\bf Propostion \ref{mrd1}.(iii)}, $\Im T^m$ and $\Ker T^m$ are non-degenerate ideals of $(\mathfrak{g},[\cdot,\cdot]_{\mathfrak{g}},B_{\mathfrak{g}})$. 
It follows that $\Im T^m=\mathfrak{g}^m$ and $\Ker T^m=C_m(\mathfrak{g})$ are the characteristic invariant ideals that satisfy the statement.

\smallskip
\noindent
We are now ready to prove the {\it if and only if\/} statement (ii):

\smallskip

\noindent $(\Leftarrow)$ From the hypothesis of the statement, we have that the quadratic Lie algebra $(\g,[\cdot,\cdot]_{\g},B_{\g})$, has the following orthogonal decomposition:
$$
\mathfrak{g} = \mathfrak{g}^m
 \overset{\perp}{\oplus} C_m(\mathfrak{g}),\quad C_{m}(\g)\neq \{0\}.
$$
As we prove above, this orthogonal decomposition corresponds to the Fitting's decomposition for the linear map $T:\g \to \g$, where by {\bf Lemma \ref{lema 2}.(viii)}, $\Ker T^m=C_m(\g)$ and $\Im T^m=\g^m$. If $V$ is non-degenerate. 
then $T$ is invertible, which implies $C_m(\g)=\{0\}$, contradicting the fact that 
the latter is non-zero. Therefore, $V \cap V^{\perp} \neq \{0\}$.
\smallskip

\noindent 
$(\Rightarrow)$ Assume $V \cap V^{\perp} \neq \{0\}$. We shall prove that $(\Im T^m,[\cdot,\cdot]_{\mathfrak{g}}|_{\Im T^m \times \Im T^m})$ has trivial center. By (i), we have that $T$ is non-invertible, then  $\{0\} \neq \Ker T^m=C_m(\mathfrak{g}) \neq \{0\}$.  We claim that the the quotient space $\mathfrak{b}_m=\mathfrak{g}/C_m(\mathfrak{g})$ has a Lie algebra structure induced by $[\cdot,\cdot]_{\Delta}$, given by $[x+C_m(\g),y+C_m(\g)]_{\mathfrak{b}_m}=[x,y]_{\Delta}+C_m(\g)$, for all $x,y \in \g$. Indeed, let $x,y,z \in \mathfrak{g}$. By {\bf Lemma \ref{lema 2}.(iii)} and {\bf (vii)}, we have 
$$
T([x,[y,z]_{\Delta}]_{\Delta})=[x,[y,z]_{\Delta}]_{\mathfrak{g}}=[x,h([y,z]_{\G})]_{\g}=h([x,[y,z]_{\G}]_{\G}).
$$
So, by the Jacobi identity of $[\cdot,\cdot]_{\mathfrak{G}}$, it follows that, $[x,[y,z]_{\Delta}]_{\Delta}+[y,[z,x]_{\Delta}]_{\Delta}+[z,[x,y]_{\Delta}]_{\Delta} \in \Ker(T) \subset C_m(\mathfrak{g})$, which implies that $\mathfrak{b}_{m}$ has a Lie algebra structure induced by the skew-symmetric product $[\cdot,\cdot]_{\Delta}$.
\smallskip

\noindent 
Let $x+C_m(\mathfrak{g})$ be in the center of the Lie algebra $(\mathfrak{b}_m,[\cdot,\cdot]_{\Delta})$ and let $y \in \mathfrak{g}$ be an arbitrary element. Then $[x,y]_{\Delta} \in C_m(\mathfrak{g})$, which implies $[x,y]_{\mathfrak{g}}=T([x,y]_{\Delta}) \in C_m(\mathfrak{g})$ for all $y \in \mathfrak{g}$; thus, $x \in C_{m+1}(\mathfrak{g})=C_{m}(\mathfrak{g})$. Therefore, $x+C_m(\mathfrak{g})=0$ in the quotient and $(\mathfrak{b}_m,[\cdot,\cdot]_{\Delta})$ has no center.
To conclude the argument, notice that the linear map $x \mapsto T(x)+C_m(\mathfrak{g})$ yields a Lie algebra isomorphism, $(\Im T^m,[\cdot,\cdot]_{\mathfrak{g}}|_{\Im T^m \times \Im T^m})\to (\mathfrak{b}_m,[\cdot,\cdot]_{\Delta})$.
Therefore, $C(\Im T^m)=\{0\}$, since the algebra is perfect.

\end{proof}

\noindent We end up this section with a result that reduces the study of quadratic Lie algebras arising from central extensions of quadratic Lie algebras, to the consideration
of two extreme cases: when the kernel of the central extension is \emph{isotropic} 
and when it is \emph{non-degenerate}.

\begin{proposicion}\label{proposicion reduccion}
Let $(\mathfrak{g},[\cdot,\cdot]_{\mathfrak{g}},B_{\mathfrak{g}})$ be a quadratic Lie algebra, 
let $V$ be a  finite-dimensional vector space and let
$(\mathfrak{G},[\cdot,\cdot]_{\mathfrak{G}})$ be a central extension of $(\mathfrak{g},[\cdot,\cdot]_{\mathfrak{g}})$ by $V$. 
If there exists an invariant metric $B_{\mathfrak{G}}$ on $(\mathfrak{G},[\cdot,\cdot]_{\mathfrak{G}})$,
then there are two non-degenerate ideals, $U$ and $U^\perp$, of $\mathfrak{G}$, such that,
\begin{itemize}

\item[(i)] $U \subseteq V$,

\item[(ii)] $\mathfrak{G}=U^\perp\oplus U$

\item[(iii)] $(U^\perp,[\cdot,\cdot]_{\mathfrak{G}}|_{U^{\perp} \times U^{\perp}},B_{\mathfrak{G}}|_{U^\perp \times U^\perp})$ 
is a central extension of  $(\mathfrak{g},[\cdot,\cdot]_{\mathfrak{g}},B_{\mathfrak{g}})$
and the kernel of this extension is equal to $V \cap V^{\perp}$; {\it ie.,\/}
$$
0\to V \cap V^{\perp}\to U^\perp\to \frak{g}\to 0.
$$

\end{itemize}

\end{proposicion}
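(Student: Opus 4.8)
The plan is to isolate the degeneracy of the kernel $V$ inside its own radical $V \cap V^\perp$ and to split off an orthogonal non-degenerate complement. The restriction $B_{\mathfrak{G}}|_{V \times V}$ is a symmetric bilinear form on $V$ whose radical is precisely $\{v \in V \mid B_{\mathfrak{G}}(v,w)=0 \ \forall w \in V\} = V \cap V^\perp$, so standard linear algebra over $\mathbb{F}$ lets me write $V = (V \cap V^\perp) \oplus U$, where $U$ is any complement of the radical inside $V$; the restriction $B_{\mathfrak{G}}|_{U \times U}$ is then non-degenerate (if $u \in U$ is $B_{\mathfrak{G}}$-orthogonal to $U$ it is also orthogonal to $V \cap V^\perp$, hence lies in the radical, forcing $u=0$). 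This $U$ will be the desired ideal.

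Next I would observe that $U$ is automatically a non-degenerate \emph{central} ideal of $\mathfrak{G}$: it lies in $V \subseteq C(\mathfrak{G})$, so $[\mathfrak{G},U]_{\mathfrak{G}} = \{0\} \subseteq U$. Because $U$ is non-degenerate, the ambient non-degeneracy of $B_{\mathfrak{G}}$ gives the orthogonal decomposition $\mathfrak{G} = U \overset{\perp}{\oplus} U^\perp$ with $U^\perp$ again non-degenerate. That $U^\perp$ is an ideal follows from invariance: for $x \in U^\perp$, $y \in \mathfrak{G}$ and $z \in U$, one has $B_{\mathfrak{G}}([y,x]_{\mathfrak{G}},z) = B_{\mathfrak{G}}([z,y]_{\mathfrak{G}},x) = 0$, since $[z,y]_{\mathfrak{G}} \in U$ (in fact $[z,y]_{\mathfrak{G}}=0$, as $U$ is central) and $x \in U^\perp$. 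This already yields (i) and (ii).

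For (iii) the key computation is to identify $U^\perp \cap V$ with $V \cap V^\perp$. One inclusion is immediate: $V \cap V^\perp \subseteq V^\perp$ is orthogonal to all of $V$, in particular to $U \subseteq V$, so $V \cap V^\perp \subseteq U^\perp$, giving $V \cap V^\perp \subseteq U^\perp \cap V$. Conversely, if $w \in U^\perp \cap V$, decompose $w = w_0 + u$ along $V = (V\cap V^\perp) \oplus U$; then $u = w - w_0 \in U^\perp$ because both $w$ and $w_0$ lie in $U^\perp$, and since $U \cap U^\perp = \{0\}$ by non-degeneracy of $U$, we get $u = 0$ and $w \in V \cap V^\perp$. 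Thus $U^\perp \cap V = V \cap V^\perp$, which is exactly the kernel of the restricted projection $\pi_{\mathfrak{g}}|_{U^\perp} \colon U^\perp \to \mathfrak{g}$.

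Finally I would check surjectivity of $\pi_{\mathfrak{g}}|_{U^\perp}$ and assemble the short exact sequence. A dimension count does it: $\dim_{\mathbb{F}} U^\perp = \dim_{\mathbb{F}} \mathfrak{G} - \dim_{\mathbb{F}} U = \dim_{\mathbb{F}} \mathfrak{g} + \dim_{\mathbb{F}}(V \cap V^\perp)$, while $\Ker(\pi_{\mathfrak{g}}|_{U^\perp}) = U^\perp \cap V = V \cap V^\perp$; hence the image has dimension $\dim_{\mathbb{F}} \mathfrak{g}$ and $\pi_{\mathfrak{g}}|_{U^\perp}$ is onto. Since $U^\perp$ is an ideal, $\pi_{\mathfrak{g}}|_{U^\perp}$ is a Lie algebra epimorphism, and $V \cap V^\perp \subseteq C(\mathfrak{G}) \cap U^\perp$ lies in the center of $U^\perp$, so the sequence $0 \to V \cap V^\perp \to U^\perp \to \mathfrak{g} \to 0$ is a central extension; together with the non-degenerate invariant metric $B_{\mathfrak{G}}|_{U^\perp \times U^\perp}$ this is precisely statement (iii). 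The only genuinely delicate point is the identification $U^\perp \cap V = V \cap V^\perp$, which pins the kernel down exactly; the rest is forced by non-degeneracy and invariance.
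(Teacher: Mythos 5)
Your proof is correct, and its core coincides with the paper's: both choose a complement $U$ to the radical $V \cap V^{\perp}$ of $B_{\mathfrak{G}}|_{V\times V}$ inside $V$, check that $U$ is a non-degenerate central ideal, and split $\mathfrak{G} = U^{\perp} \overset{\perp}{\oplus} U$. Where you genuinely differ is in verifying (iii). The paper builds an explicit isometric copy of $\mathfrak{g}$ inside $U^{\perp}$, namely $\mathfrak{p} = \{x - \sum_{i}B_{\mathfrak{G}}(x,u_i)u_i \mid x\in\mathfrak{g}\}$, transports the bracket and the metric $B_{\mathfrak{g}}$ to $\mathfrak{p}$, and shows that the projection of $U^{\perp}$ onto $\mathfrak{p}$ along $V \cap V^{\perp}$ is a Lie algebra morphism; this exhibits $U^{\perp}$ literally in the form (copy of $\mathfrak{g}$) $\oplus$ $(V\cap V^{\perp})$ required by the paper's cocycle-based definition of a central extension. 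You instead restrict the ambient projection $\pi_{\mathfrak{g}}$ to $U^{\perp}$, pin down its kernel as $U^{\perp} \cap V = V \cap V^{\perp}$ (your argument for this, which is indeed the delicate point, is correct), and get surjectivity by a dimension count. Up to the isometry $\mathfrak{p}\cong\mathfrak{g}$ these are the same epimorphism, so the trade-off is economy versus explicitness: you avoid constructing the auxiliary quadratic algebra $\mathfrak{p}$ and checking its bracket and metric, while the paper's $\mathfrak{p}$ provides for free a vector-space complement of $V \cap V^{\perp}$ in $U^{\perp}$, i.e.\ a linear section that converts the abstract exact sequence into the cocycle presentation of Definition 1.5; to be fully aligned with that definition you should add the standard one-line remark that any linear section of $\pi_{\mathfrak{g}}|_{U^{\perp}}$ produces such a presentation. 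A small bonus of your route: it needs no case distinction, covering $V$ non-degenerate ($U=V$) and $V$ isotropic ($U=\{0\}$) uniformly, whereas the paper's proof is written only for the intermediate case in which $V$ degenerates but is not isotropic.
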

\begin{proof} 
\noindent Suppose $V$ degenerates but it is not isotropic; {\it ie.,\/} $V \cap V^{\perp} \neq \{0\}$ and $V \nsubseteq V^{\perp}$. 
Let $U$ be a complementary subspace to $V \cap V^{\perp}$ in $V$; that is, $V=(V \cap V^{\perp}) \oplus U$.
Then $U$ is a non-degenerate ideal of $\mathfrak{G}$. Thus, $\mathfrak{G}=U^\perp\oplus U$.
Let $\{u_1,\ldots,u_\ell \} \subseteq U$ be a basis of $U$, such that $B_{\mathfrak{G}}(u_i,u_j)=\delta_{ij}$. 
Then, $U^\perp=\{ x+v-\overset{\ell}{\underset{i=1}{\sum}}B_{\mathfrak{G}}(x+v,u_i)u_i\,\mid\,x \in \mathfrak{g},\,v \in V \}$. 
We may define a Lie bracket 
$[\cdot,\cdot]_{\mathfrak{p}}:\mathfrak{p}
\times \mathfrak{p} \rightarrow \mathfrak{p}$,
on the subspace 
$\mathfrak{p}=\{ x-\overset{\ell}{\underset{i=1}{\sum}}B_{\mathfrak{G}}(x,u_i)u_i\,\mid\,x \in \mathfrak{g}\}$,
through,
\begin{equation*}
\biggl[x-\sum_{i=1}^{\ell}B_{\mathfrak{G}}(x,u_i)u_i\,,\,y-\sum_{i=1}^{\ell}B_{\mathfrak{G}}(y,u_i)u_i \biggr]_{\mathfrak{p}}=[x,y]_{\mathfrak{g}}-\sum_{i=1}^{\ell}B_{\mathfrak{G}}([x,y]_{\mathfrak{g}},u_i)u_i,
\end{equation*}
for all $x,y \in \mathfrak{g}$. Now define an invariant metric $B_{\mathfrak{p}}:\mathfrak{p} \times \mathfrak{p} \rightarrow \mathbb{F}$ by
\begin{equation*}
B_{\mathfrak{p}}
\biggl(x-\sum_{i=1}^{\ell}B_{\mathfrak{G}}(x,u_i)u_i\,,\,y-\sum_{i=1}^{\ell}B_{\mathfrak{G}}(y,u_i)u_i \biggr)=B_{\mathfrak{g}}(x,y),\,\,\forall
x,y \in \mathfrak{g}.
\end{equation*}
So, $(\mathfrak{p},[\cdot,\cdot]_{\mathfrak{p}},B_{\mathfrak{p}})$ is a quadratic Lie algebra naturally isometric to
$(\mathfrak{g},[\cdot,\cdot]_{\mathfrak{g}},B_{\mathfrak{g}})$ 
via, $x-\overset{\ell}{\underset{i=1}{\sum}}B_{\mathfrak{G}}(x,u_i)u_i \mapsto x$, for all $x \in \mathfrak{g}$.
\smallskip

\noindent 
Now, as a vector space $U^\perp$ is the direct sum of $\mathfrak{p}$ and $V \cap V^{\perp}$. 
Let $\pi_{\mathfrak{p}}:U^\perp \rightarrow \mathfrak{p}$ be the projection onto $\mathfrak{p}$. 
Since $[\mathfrak{G},\mathfrak{G}]_{\mathfrak{G}} \subseteq U^\perp$ 
and $V\subseteq C(\mathfrak{G})$, it is a straightforward matter to prove that 
$\pi_{\mathfrak{p}}$ is a Lie algebra morphism.
Therefore $(U^\perp,[\cdot,\cdot]_{\mathfrak{G}}|_{U^\perp \times U^{\perp}})$ is a central extension of $(\mathfrak{p},[\cdot,\cdot]_{\mathfrak{p}})$ with isotropic kernel $V \cap V^{\perp}$; {\it ie.,\/}
$$
0 \rightarrow V \cap V^{\perp} \rightarrow U^\perp \rightarrow \mathfrak{p} \rightarrow 0.
$$
Since $\mathfrak{p}$ is naturally isometric to $\mathfrak{g}$, the statement follows.
\end{proof}

\noindent According to {\bf Proposition \ref{proposicion reduccion}},
the study of central extensions of quadratic Lie algebras which are quadratic themselves,
can be reduced to the two extreme cases in which either the kernel $V$ of the extension
is isotropic or it is non-degenerate. The case 
when $V$ is non-degenerate has been handled by {\bf Propostion \ref{teorema de estructura 2}}.
We now look at the case when $V$ is isotropic.

\subsection{Isotropic kernel}

\begin{lema}\label{lema 31}
Let $V$ be an isotropic ideal of $(\mathfrak{G},[\cdot,\cdot]_{\mathfrak{G}},B_{\mathfrak{G}})$, and let
$\mathfrak{g}=\Im T+\mathfrak{a}$, with $\Ker T \cap \mathfrak{a}^{\perp}=\{0\}$ and $\mathfrak{a}=\span_{\mathbb{F}}\{a_1,\ldots,a_r\}$ as in 
{\bf Lemma \ref{mrd1}}. Let $\{v_1,\dots, v_r\}$ be the basis of $V$ used in {\bf Lemma \ref{lema 1}}
and let $h:\mathfrak{G}\to\mathfrak{g}$ be the map defined therein.
Then,

\begin{itemize}

\item[(i)] $\{a_1,\ldots,a_r\}$ y $\{h(v_1),\ldots,h(v_r)\}$ are both linearly independent sets.

\item[(ii)] $h(\mathfrak{a}) \subset h(V)=\Ker T$, $r=\dim_{\mathbb{F}}\Ker T$ and $h|_V:V \rightarrow \Ker T$ is bijective.

\item[(iii)] $B_{\mathfrak{G}}(\Im T,V)=\{0\}$ and $T=T \circ h \circ T$.

\item[(iv)] $E=T \circ (h|_{\mathfrak{g}})$ is a projection ($E^2=E$) and the decomposition of $\mathfrak{g}$ associated to $E$ is
$\mathfrak{g}=\Im T \oplus \mathfrak{a}$.

\item[(v)] $F=h \circ T$ is a projection $(F^2=F)$ and the decomposition of
$\mathfrak{g}$ associated to $F$ is $\mathfrak{g}=\Ker T \oplus
\mathfrak{a}^{\perp}$, where $\Ker T=\Ker F$. Moreover, for each $x \in \mathfrak{g}$,
\begin{equation}\label{l1}
x=F(x)+\sum_{i=1}^rB_{\mathfrak{g}}(a_i,x)h(v_i).
\end{equation}
In addition, there exists a quadratic Lie algebra structure in the subspace $\mathfrak{a}^{\perp}$, such that $(\mathfrak{g},[\cdot,\cdot]_{\mathfrak{g}},B_{\mathfrak{g}})$ is a central extension of $(\mathfrak{a}^{\perp},[\cdot,\cdot]_{\mathfrak{a}^{\perp}},B_{\mathfrak{a}^{\perp}})$ by $\Ker T$.
\end{itemize}
\end{lema}
\begin{proof}

(i) If $V$ is isotropic, {\bf Lemma \ref{lema 2}.(i)} says that the bases 
$\{a_1+w_1,\ldots, a_r+~w_r\}$ and $\{v_1,\ldots,v_r\}$ are dual to each other with respect to $B_{\mathfrak{G}}$;
thus, $B_{\mathfrak{g}}(a_i,h(v_j))=B_{\mathfrak{G}}(a_i,v_j)=B_{\mathfrak{G}}(a_i+w_i,v_j)=\delta_{ij}$, 
and therefore, $\{a_1,\ldots,a_r\}$ and $\{h(v_1),\ldots,h(v_r)\}$ are both linearly independent sets.

\medskip
\noindent 
(ii) Since $V$ is isotropic, \eqref{ex4} can be written as,
\begin{equation}\label{ex4-2}
x=T \circ h(x+v)+\sum_{i=1}^rB_{\mathfrak{G}}(x,v_i)a_i,\,\,\forall \,x \in \mathfrak{g}\,,v \in V.
\end{equation}
Making $x=0$ in this expression, we conclude that $T(h(v))=0$, for all $v \in V$; {\it ie\/,} $h(V) \subseteq
\Ker T$. 
Now consider $x \in \Ker T$. We use the fact that $h \circ k=\operatorname{Id}_{\mathfrak{g}}$ in 
\eqref{expresion para k}, to obtain,
$$
x=h(k(x))=h(T(x))+\sum_{i=1}^rB_{\mathfrak{g}}(a_i,x)h(v_i)=\sum_{i=1}^rB_{\mathfrak{g}}(a_i,x)h(v_i),
$$
but this shows that $\Ker T \subseteq h(V)$. Thus $\Ker T=h(V)$.

\smallskip
\noindent 
Observe that $h(a_i)=-h(w_i) \in h(V)$, for all $1 \leq i \leq r$; {\it ie\/,} $h(\mathfrak{a}) \subset h(V)$, a fact that will be needed in (iv) below.

\medskip
\noindent (iii) We shall now prove that $B_{\mathfrak{G}}(\Im T,V)=\{0\}$. 
We shall use the fact that $\Im k=V^{\perp}$, proved in {\bf Lemma \ref{lema 1}.(iii)}, to show our assertion. Indeed,
let $x \in \mathfrak{g}$ and let $v \in V$. Then, $B_{\mathfrak{G}}(T(x),v)=B_{\mathfrak{G}}(k(x),v)=0$, which proves our claim. 
Finally, by \eqref{ex4-2}, we get,
$$
T(x)=T(h(T(x)))+\overset{r}{\underset{i=1}{\sum}}B_{\mathfrak{G}}(T(x),v_i)a_i=T \circ h \circ T(x),
\quad\text{for all\ }x \in \mathfrak{g}.
$$ 
Whence, $T=T \circ h \circ T$, as claimed.

\medskip
\noindent
(iv) Let $E=T \circ (h|_{\mathfrak{g}})$. It follows from (iii) that $E$ is idempotent ($E^2=E$). 
We have observed in (ii) that $h(\mathfrak{a}) \subset h(V)=\Ker T$. Thus, it is clear that $E(a)=0$ for all $a \in \mathfrak{a}$;
{\it ie\/,} $\mathfrak{a} \subseteq \Ker E$. On the other hand, if $x \in \Ker E$, then $x \in \mathfrak{a}$,
by means of \eqref{ex4-2}. Therefore, $\Ker E=\mathfrak{a}$ and consequently, $\dim_{\mathbb{F}}\Ker E=r$. 
On the other hand, it is clear that $\Im E \subset \Im T$. If $x \in \mathfrak{g}$, it follows 
from (iii) that $T(x)=E(T(x))$, so that $\Im E=\Im T$. Thus, the decomposition of $\mathfrak{g}$ associated to the projection 
$E$ is $\mathfrak{g}=\Im T \oplus \mathfrak{a}$.

\medskip
\noindent
(v) As we proceeded in the proof of (ii), we use the fact that, $h \circ k=\operatorname{Id}_{\mathfrak{g}}$ together with expression \eqref{expresion para k}, 
to write,
\begin{equation}\label{expresion para F}
x=h(k(x))=h(T(x))+\sum_{i=1}^rB_{\mathfrak{g}}(a_i,x)h(v_i),\,\,\forall x \in \mathfrak{g}. 
\end{equation}
It follows from (iii) that the operator $F=h \circ T$ is idempotent. It is clear that $\Ker T \subset \Ker F$. 
On the other hand, if $x \in \Ker F$, using \eqref{expresion para F}, we obtain, $x \in h(V)=\Ker T$. 
Therefore $\Ker F=\Ker T$ and $\dim_{\mathbb{F}}\Ker F=r$. 
Since $F$ is a projection, $\dim_{\mathbb{F}}\Im F=n-r$. 

\smallskip
\noindent We claim that $\mathfrak{a}^{\perp}$ (as subspace of $\mathfrak{g}$) is equal to $\Im F$. 
Indeed, let $a \in \mathfrak{a}$ and let $x \in \mathfrak{g}$. 
Now (ii) says that $h(a) \in \Ker T$. Since both, $h$ and $T$ are $B_{\mathfrak{g}}$-symmetric, we get,
$$
B_{\mathfrak{g}}(F(x),a)=B_{\mathfrak{g}}(h(T(x)),a)=B_{\mathfrak{g}}(T(x),h(a))=B_{\mathfrak{g}}\left(x,T(h(a))\right)=0.
$$
Then, $\Im F \subset \mathfrak{a}^{\perp}$. On the other hand, take $x \in \mathfrak{a}^{\perp}$. 
It follows from \eqref{expresion para F} that, 
$x=h(T(x))=F(x)$, thus implying that $\Im F=\mathfrak{a}^{\perp}$. 
Therefore, the decomposition of
$\mathfrak{g}$ associated to the projection $F$ is
$\mathfrak{g}=\Ker T \oplus \mathfrak{a}^{\perp}$.

\smallskip
\noindent 
Let $[\cdot,\cdot]_{\mathfrak{a}^{\perp}}:\mathfrak{a}^{\perp} \times \mathfrak{a}^{\perp} \rightarrow \mathfrak{a}^{\perp}$ be the skew-symmetric bilinear map defined by $[x,y]_{\mathfrak{a}^{\perp}}=F([x,y]_{\mathfrak{g}})$. Then, $(\mathfrak{a}^{\perp},[\cdot,\cdot]_{\mathfrak{a}^{\perp}})$ is an $(n-r)$-dimensional Lie algebra. Also, let $B_{\mathfrak{a}^{\perp}}:\mathfrak{a}^{\perp} \times \mathfrak{a}^{\perp} \rightarrow \mathbb{F}$, be the symmetric bilinear form defined by $B_{\mathfrak{a}^{\perp}}(x,y)=B_{\mathfrak{g}}(T(x),y)$ for all $x,y \in \mathfrak{a}^{\perp}$. Then $(\mathfrak{a}^{\perp},[\cdot,\cdot]_{\mathfrak{a}^{\perp}},B_{\mathfrak{a}^{\perp}})$ is in fact an $(n-r)$-dimensional quadratic Lie algebra, such that the projection $\mathfrak{g} \rightarrow \mathfrak{a}^{\perp}$, is a surjective Lie algebra morphism whose kernel is $\Ker T$. Thus, the short exact sequence,
$$
0 \rightarrow \Ker T \rightarrow \mathfrak{g} \rightarrow \mathfrak{a}^{\perp} \rightarrow 0,
$$
states that $(\mathfrak{g},[\cdot,\cdot]_{\mathfrak{g}})$ is a central extension of $(\mathfrak{a}^{\perp},[\cdot,\cdot]_{\mathfrak{a}^{\perp}})$ by $\Ker T$.
\end{proof}

\noindent
\begin{rmk}\label{3rmk}
$\,$
\begin{enumerate}

\item
It follows from {\bf Lemma \ref{lema 31}.(i)-(ii)} that if the kernel $V$ of the central extension
is isotropic, then $\operatorname{dim}_{\mathbb{F}} V\le \operatorname{dim}_{\mathbb{F}}C(\mathfrak{g})$. 
This fact can be used to argue in the other direction: if $\operatorname{dim}_{\mathbb{F}} V> \operatorname{dim}_{\mathbb{F}}C(\mathfrak{g})$
then the kernel of the central extension, $V$, cannot be isotropic.

\item From {\bf Lemma \ref{lema 31}.(v)} 
we know that the subspace $\mathfrak{a}^{\perp} \subset\mathfrak{g}$
is a Lie algebra quotient of $\mathfrak{g}$, so that
the latter is a central extension of $\mathfrak{a}^{\perp}$ by $\Ker T$. We also know that
an invariant metric
$B_{\mathfrak{a}^{\perp}}:\mathfrak{a}^{\perp} \times \mathfrak{a}^{\perp} \rightarrow \mathbb{F}$,
can be defined through, 
$B_{\mathfrak{a}^{\perp}}(x,y)=B_{\mathfrak{g}}(T(x),y)$, for all $x,y \in \mathfrak{a}^{\perp}$. 
This can be done, in fact, {\it for any\/} complementary subspace $\mathfrak{p}$ to $\operatorname{Ker}T$ in $\mathfrak{g}$.
Indeed, for any pair of vectors $x,y\in\mathfrak{p}$, we write $[x,y]_{\mathfrak{g}}=[x,y]_{\mathfrak{p}}+\zeta(x,y)$,
where the skew-symmetric bilinear maps $[\cdot,\cdot]_{\mathfrak{p}}:\mathfrak{p}\times \mathfrak{p}\to \mathfrak{p}$
and $\zeta:\mathfrak{p}\times \mathfrak{p}\to \operatorname{Ker}T$ are defined by the direct sum
decomposition $\mathfrak{g}=\mathfrak{p}\oplus\operatorname{Ker}T$. The first bilinear map $[\cdot,\cdot]_{\mathfrak{p}}$, defines a Lie
algebra structure on $\mathfrak{p}$ such that $(\mathfrak{g},[\cdot,\cdot]_{\mathfrak{g}})$ becomes a central extension of $(\mathfrak{p},[\cdot,\cdot]_{\mathfrak{p}})$
by $\operatorname{Ker}T$, and the second bilinear map $\zeta$, is just the $2$-cocycle associated to this extension.
Furthermore, one may define a symmetric, bilinear form $B_{\mathfrak{p}}:\mathfrak{p}\times \mathfrak{p}\to\mathbb{F}$,
by means of $B_{\mathfrak{p}}(x,y)=B_{\mathfrak{g}}(T(x),y)$, for all $x,y \in \mathfrak{p}$, making $(\mathfrak{p},[\cdot,\cdot]_{\mathfrak{p}})$
into a quadratic Lie algebra.

\end{enumerate}
\end{rmk}
\noindent In the following Lemma we will prove the existence of an invariant metric in the central extension $(\mathfrak{G},[\cdot,\cdot]_{\mathfrak{G}})$, for which $\mathfrak{a}$ is an isotropic subspace of $\mathfrak{G}$.

\begin{lema}\label{a isotropico}
If $V$ is isotropic, there is an invariant metric $\bar{B}_{\mathfrak{G}}$ on $(\mathfrak{G},[\cdot,\cdot]_{\mathfrak{G}})$
for which $\mathfrak{a}\subset\mathfrak{G}$ is an isotropic subspace.
\end{lema}

\begin{proof}
By {\bf Lemma \ref{lema 31}.(iv)}, $\mathfrak{G}=\mathfrak{a} \oplus \Im T \oplus V$. This may be considered as a Witt decomposition for $\mathfrak{G}$. Consider the $B_{\mathfrak{G}}$-symmetric invertible centroid $Q:\mathfrak{G} \rightarrow \mathfrak{G}$  
defined by, $Q(T(x))=T(x)$, for all $x \in \mathfrak{g}$,
$Q(v)=v$, for all $v \in V$, and $Q(a)=a-\overset{r}{\underset{i=1}{\sum}}B_{\mathfrak{G}}(a,a_i)v_i$ for all $a \in \mathfrak{a}$. It is immediate to verify that $Q\in\Gamma_{B_{\mathfrak{G}}}^0(\mathfrak{G})$ and that $\mathfrak{a}$ becomes isotropic in the invariant metric $\bar{B}_{\mathfrak{G}}$ defined by $\bar{B}_{\mathfrak{G}}(x,y)=B_{\mathfrak{G}}({Q}(x),y)$, for all $x,y \in \mathfrak{G}$.
\end{proof}

\begin{rmk}\label{remark 4}
Consider the map $Q:\mathfrak{G} \rightarrow \mathfrak{G}$ just defined.
The linear maps $h$ and $k$ of {\bf Lemma \ref{lema 1}} depend on the invariant metrics $B_{\mathfrak{g}}$ and $B_{\mathfrak{G}}$. 
If $\bar{B}_{\mathfrak{G}}$ is used instead of $B_{\mathfrak{G}}$, $h$ and $k$ need to be changed for
$\bar{h}:\mathfrak{G} \rightarrow \mathfrak{g}$ and $\bar{k}:\mathfrak{g} \rightarrow \mathfrak{G}$, respectively. 
Take $x,y \in \mathfrak{g}$ and $v \in V$. By the definition of $Q$, it is clear that $Q(k(x))=k(x)$, and,
$$
B_{\mathfrak{G}}(k(x),y+v)=B_{\mathfrak{G}}(k(x),y)=B_{\mathfrak{g}}(x,y)=\bar{B}_{\mathfrak{G}}(\bar{k}(x),y+v)=B_{\mathfrak{G}}(\bar{k}(x),y+v).
$$
Therefore, $k=\bar{k}$. By {\bf Lemma \ref{lema 1}.(i)}, $\bar{h} \circ k=\operatorname{Id}_{\mathfrak{g}}=h \circ k$. The invariance of $B_{\mathfrak{G}}$ implies that the first derived ideal $[\mathfrak{G},\mathfrak{G}]_{\mathfrak{G}}$ is contained in $\Im k$.
Moreover, $\bar{h}([x,y]_{\mathfrak{G}})=h([x,y]_{\mathfrak{G}})$ for all $x,y \in \mathfrak{G}$. 
In particular, the linear map $\rho:\mathfrak{G} \rightarrow \Der\mathfrak{g} \cap \frak{o}(B_{\g})$ of {\bf Lemma \ref{lema 2}}, now satisfies $\rho(x)(y)=\bar{h}([x,y]_{\mathfrak{G}})$, for all $x \in \mathfrak{G}$ and $y \in \mathfrak{g}$. 
In other words, the replacement of $B_{\mathfrak{G}}$ by $\bar{B}_{\mathfrak{G}}$ simply amounts to change
$h$ by $\bar{h}$. Moreover, $h-\bar{h}:\mathfrak{a} \oplus \Im T \oplus V\to\mathfrak{g}$, 
might be different from zero only in the values
$(h-\bar{h})(a_i)$, $\{a_i+w_i\mid 1\le i\le r\}$ being the basis for $\Ker h=\mathfrak{g}^\perp$ (see {\bf Lemma \ref{lema 2}}).
\end{rmk}

\begin{lema}\label{q1}
Consider the linear map $\bar{h}:\mathfrak{G} \rightarrow \mathfrak{g}$, appearing in {\bf Lemma \ref{lema 1}}. If $V$ is isotropic, then $\Ker \bar{h}=\mathfrak{a}$.
\end{lema}
\begin{proof}
\noindent The fact that $\mathfrak{a}$ is an isotropic subspace 
of $\mathfrak{G}$ for the metric 
$\bar{B}_{\mathfrak{G}}$, is equivalent to the fact that $\Ker\bar{h}=\mathfrak{a}$;
{\it ie\/,} $\mathfrak{g}^{\perp}=\mathfrak{a}$ (see {\bf Lemma \ref{lema 1}.(iii)}). Indeed, if $\Ker\bar{h}=\mathfrak{a}$ then, by {\bf Lema \ref{lema 1}.(iii)}, we get $\mathfrak{g}^{\perp}=\Ker\bar{h}$, and $B_{\mathfrak{G}}(\mathfrak{a},\mathfrak{a})=\{0\}$. 

\smallskip
\noindent
On the other hand, let us assume that $\bar{B}_{\mathfrak{G}}(\mathfrak{a},\mathfrak{a})=\{0\}$. 
By {\bf Lemma \ref{lema 31}.(ii)}-{\bf (iii)}, it follows that $\bar{B}_{\mathfrak{G}}(a,T(x))=B_{\mathfrak{g}}(\bar{h}(a),T(x))=B_{\mathfrak{g}}(T\circ\bar{h}(a),x)=0$, for all $a \in \mathfrak{a}$ and for all $x \in \mathfrak{g}$, which implies $\mathfrak{a} \subseteq \mathfrak{g}^{\perp}$. 
Since $\dim_{\mathbb{F}}(\mathfrak{g}^{\perp})=r=\dim_{\mathbb{F}}(\mathfrak{a})$, it follows that $\mathfrak{g}^{\perp}=\mathfrak{a}$ ({\bf Lemma \ref{lema 1}.(iii)}-{\bf (iv)} and {\bf Lemma \ref{lema 31}}.(i)).
\end{proof}

\begin{rmk}\label{remark 5}
From now on we shall work with the invariant metric $\bar{B}_{\mathfrak{G}}$
that makes $\mathfrak{a}$ isotropic and $\mathfrak{a}=\Ker\bar{h}$. 
Therefore, from this point on, we shall write $B_{\mathfrak{G}}$ instead of
$\bar{B}_{\mathfrak{G}}$ and $h$ instead of $\bar{h}$, but always under the
assumption that $\mathfrak{a}$ is isotropic for $B_{\mathfrak{G}}$
and $\mathfrak{a}=\Ker h$.
\end{rmk}

%%%%%%%%%%%%%%%%%%%%%%%%%%%%%%%%%%%%%%%%%%%%%%%%%%%%%%%%%%%%%%%%%%%%%%%%%%%%%%%%%%%%%%%%%

\begin{lema}\label{lema 4}
Let $\Nil(\mathfrak{g})$ be the maximal nilpotent ideal of $\mathfrak{g}$.
If $V$ is an isotropic ideal, then,

\begin{itemize}

\item[(i)] $\mathfrak{g}=\Im T+\Nil(\mathfrak{g})$ and $\dim_{\mathbb{F}}\Nil(\mathfrak{g}) \geq r$.

\item[(ii)] If $\dim_{\mathbb{F}}\Nil(\mathfrak{g})=\dim_{\mathbb{F}}V=r$, then,
$$
(\mathfrak{g},[\cdot,\cdot]_{\mathfrak{g}},B_{\mathfrak{g}})=(\Im T,[\cdot,\cdot]_{\mathfrak{g}}\mid_{\Im T \times \Im T},B_{\mathfrak{g}}|_{\Im T \times \Im T}) \overset{\perp}{\oplus} (\mathfrak{a},B_{\mathfrak{g}}|_{\mathfrak{a} \times \mathfrak{a}}),
$$
where, 
$\mathfrak{a}=C(\mathfrak{g})$ and $\Im T=[\mathfrak{g},\mathfrak{g}]_{\mathfrak{g}}$. 
Furthermore, $(\Im T,[\cdot,\cdot]_{\mathfrak{g}}|_{\Im T \times \Im T})$ is a semisimple Lie algebra and $(\mathfrak{g},[\cdot,\cdot]_{\mathfrak{g}})$ 
is reductive. In addition, there exists a skew-symmetric bilinear map
$\omega:\mathfrak{a} \times \mathfrak{a} \rightarrow \mathfrak{a}^{*}$, such that,
$$
(\mathfrak{G},[\cdot,\cdot]_{\mathfrak{G}},B_{\mathfrak{G}})=(\Im T,[\cdot,\cdot]_{\mathfrak{g}}|_{\Im T \times \Im T},B_{\mathfrak{G}}|_{\Im T \times \Im T}) \overset{\perp}{\oplus} T^{*}\mathfrak{a},
$$
where $T^\ast\frak{a}\simeq \frak{a}\oplus\frak{a}^\ast$, is the cotangent bundle of $\frak{a}$,
and $\omega(x,y)(z)=\omega(y,z)(x)$.

\item[(iii)] If $(\mathfrak{g},[\cdot,\cdot]_{\mathfrak{g}},B_{\mathfrak{g}})$ is indecomposable and not simple, then $(\mathfrak{G},[\cdot,\cdot]_{\mathfrak{G}})$ is nilpotent.

\end{itemize}

\end{lema}

\begin{proof}
$\,$
\noindent (i) Let us consider the Fitting decomposition of $\mathfrak{g}=\mathfrak{q} \oplus \mathfrak{n}$ associated to $T$ (see {\bf Proposition \ref{mrd1}}), where $\mathfrak{q}=\Im T^m$ and $\mathfrak{n}=\Ker T^m$. The linear map $T$ induces an invertible transformation in the quotient $\mathfrak{g}/\mathfrak{n}$, which implies $\mathfrak{g}=\Im T+\mathfrak{n}$, but by {\bf Lemma \ref{lema 2}.(viii)}, $\mathfrak{n}$ is contained in $\Nil(\mathfrak{g})$, so the result follows. 
On the other hand, $\Ker T \subseteq C(\mathfrak{g}) \subseteq \Nil(\mathfrak{g})$ (see {\bf Lemma \ref{lema 2}.(ii)}). 
Therefore, $\dim_{\mathbb{F}}\Nil(\mathfrak{g}) \geq r$.

\medskip
\noindent
(ii) If $r=\dim_{\mathbb{F}}\Nil(\mathfrak{g})$, then (i) 
and the fact that $\dim_{\mathbb{F}}\Im T=n-r$, imply that $\mathfrak{g}=\Im T \oplus \Nil(\mathfrak{g})$. 
Since $T$ commutes with the adjoint representation, it is clear that $\Nil(\mathfrak{g})$ is invariant under $T$, so $T(\Nil(\mathfrak{g})) \subseteq \Nil(\mathfrak{g}) \cap \Im T=\{0\}$; therefore $\Nil(\mathfrak{g})=\Ker T=C(\mathfrak{g})$. Thus, $\Im T$ and $\Nil(\mathfrak{g})$ are both non-degenerates ideals of $(\mathfrak{g},[\cdot,\cdot]_{\mathfrak{g}},B_{\mathfrak{g}})$.

\smallskip
\noindent 
Now, {\bf Lemma \ref{lema 31}.(v)} says that, $\mathfrak{g}=\Ker T \oplus \mathfrak{a}^{\perp}$,
and that there is a Lie algebra structure on $\mathfrak{a}^{\perp}$, 
such that $(\mathfrak{g},[\cdot,\cdot]_{\mathfrak{g}})$ is the central extension of 
$(\mathfrak{a}^{\perp},[\cdot,\cdot]_{\mathfrak{a}^{\perp}})$ by $\Ker T$:
\begin{equation}\label{sucesionx}
0 \rightarrow \Ker T \rightarrow \mathfrak{g} \rightarrow \mathfrak{a}^{\perp} \rightarrow 0.
\end{equation}
However, we have just seen that $\Ker T=\Nil(\mathfrak{g})$ is a direct summand of $(\mathfrak{g},[\cdot,\cdot]_{\mathfrak{g}})$, 
so that the exact sequence \eqref{sucesionx} splits ({\bf Propostion \ref{escision}.(iv)}).
Therefore, $\mathfrak{a}$ and $\mathfrak{a}^{\perp}$ are both ideals of $(\mathfrak{g},[\cdot,\cdot]_{\mathfrak{g}})$. 
Since $\Ker T=C(\mathfrak{g})$, we have,
$$
\Im T={\Ker T}^\perp=C(\mathfrak{g})^\perp=[\mathfrak{g},\mathfrak{g}]_{\mathfrak{g}}.
$$
Hence, we also have 
$\mathfrak{g}=\Im T\oplus\mathfrak{a}$, with
$\mathfrak{a}=C(\mathfrak{g})=\Ker T$, and therefore, $\Im T\simeq\mathfrak{g}/\mathfrak{a}$ is a reductive Lie algebra. 
The direct sum decomposition of Lie algebra ideals, 
$\mathfrak{g}=\Im T\oplus\mathfrak{a}$, also implies that $C(\Im T)=\{0\}$. 
Therefore, $(\Im T,[\cdot,\cdot]_{\mathfrak{g}}|_{\Im T \times \Im T})$ is a semisimple Lie algebra and $(\mathfrak{g},[\cdot,\cdot]_{\mathfrak{g}})$ is reductive. 

\smallskip
\noindent 
We have proved that $\mathfrak{g}=\Ker T \oplus \Im T$. So, it is not difficult to see that this decomposition corresponds to 
the {\bf Fitting's Lemma} decomposition 
associated to $T$ (see {\bf Propostion \ref{mrd1}}). Therefore, by {\bf Propostion \ref{mrd1}.(iii),(iv),(v)}, there exists an invariant
metric $\bar{B}_{\Im T}$ on $(\Im T,[\cdot,\cdot]_{\mathfrak{g}}|_{\Im T \times \Im T})$, such that,
$(\mathfrak{G},[\cdot,\cdot]_{\mathfrak{G}},B_{\mathfrak{G}})$ is isometric to,
$$
(\Im T,[\cdot,\cdot]_{\mathfrak{g}}|_{\Im T \times \Im T},\bar{B}_{\Im T}) \overset{\perp}{\oplus} (\mathfrak{a} \oplus V,[\cdot,\cdot]_{\mathfrak{G}}|_{\mathfrak{a} \oplus V \times \mathfrak{a} \oplus V},B_{\mathfrak{G}}|_{\mathfrak{a} \oplus V \times \mathfrak{a} \oplus V}),
$$
Since $\mathfrak{a}$ and $V$ are isotropic under $B_{\mathfrak{G}}$ ({\bf Lemma \ref{isotropico}}), 
and since $B_{\mathfrak{G}}$ is non-degenerate, it follows that
$V\simeq \mathfrak{a}^\ast$. Therefore $(\mathfrak{a} \oplus V,[\cdot,\cdot]_{\mathfrak{a} \oplus V},B_{\mathfrak{G}}|_{\mathfrak{a} \oplus V \times \mathfrak{a} \oplus V})$ is the cotangent bundle extension of $\mathfrak{a}$
by $\frak{a}^\ast$, associated to the cyclic map $\omega:\mathfrak{a} \times \mathfrak{a} \rightarrow \mathfrak{a}^{*}$,
defined by $\omega(a,b)=\overset{r}{\underset{i=1}{\sum}}B_{\mathfrak{g}}(D_i(a),b)a_i^{*}$, for all $a,b \in \mathfrak{a}$.

\medskip
\noindent 
(iii) Let us consider again the Fitting decomposition $\mathfrak{g}=\mathfrak{q} \oplus \mathfrak{n}$ associated to $T$.
If $(\mathfrak{g},[\cdot,\cdot]_{\mathfrak{g}},B_{\mathfrak{g}})$ is not simple and indecomposable, we
deduce that $\mathfrak{q}=\{0\}$, since $\mathfrak{n}\neq \{0\}$. It then follows from (ii) 
that $(\mathfrak{g},[\cdot,\cdot]_{\mathfrak{g}})$ is nilpotent, and therefore, so is
$(\mathfrak{G},[\cdot,\cdot]_{\mathfrak{G}})$.
\end{proof}
\noindent 
This Lemma motivates the following characterization for reductive Lie algebras.
\noindent 
\begin{proposicion}\label{proposicion mrd2}
Let $(\mathfrak{g},[\cdot,\cdot]_{\mathfrak{g}},B_{\mathfrak{g}})$ be a quadratic Lie algebra with $\dim_{\mathbb{F}}\Nil(\mathfrak{g})=r$.
Let $V$ be an $r$-dimensional vector space and let $\mathfrak{G}=\mathfrak{g} \oplus V$ be a central extension of $(\mathfrak{g},[\cdot,\cdot]_{\mathfrak{g}})$ by $V$. Then $(\mathfrak{g},[\cdot,\cdot]_{\mathfrak{g}})$ is a reductive Lie algebra if and only if $(\mathfrak{G},[\cdot,\cdot]_{\mathfrak{G}})$ admits an invariant metric with $V$ isotropic.
\end{proposicion}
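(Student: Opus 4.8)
The plan is to prove the two implications separately, obtaining the backward one almost for free from {\bf Lemma \ref{lema 4}} and devoting the real effort to constructing an explicit invariant metric for the forward one. \emph{($\Leftarrow$)} Suppose first that $(\mathfrak{G},[\cdot,\cdot]_{\mathfrak{G}})$ carries an invariant metric $B_{\mathfrak{G}}$ for which $V$ is isotropic. The hypotheses $\dim_{\mathbb{F}}\Nil(\mathfrak{g})=r=\dim_{\mathbb{F}}V$ are exactly those of {\bf Lemma \ref{lema 4}.(ii)}, so I would simply invoke that statement: it yields the orthogonal splitting $\mathfrak{g}=\Im T\overset{\perp}{\oplus}\mathfrak{a}$ with $\mathfrak{a}=C(\mathfrak{g})=\Ker T$ and $\Im T=[\mathfrak{g},\mathfrak{g}]_{\mathfrak{g}}$ semisimple. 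Hence $\mathfrak{g}$ is the direct sum of a semisimple ideal and its centre, i.e. reductive, and nothing further is needed in this direction.

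\emph{($\Rightarrow$)} Now assume $(\mathfrak{g},[\cdot,\cdot]_{\mathfrak{g}})$ is reductive, say $\mathfrak{g}=\mathfrak{s}\oplus\mathfrak{z}$ with $\mathfrak{s}=[\mathfrak{g},\mathfrak{g}]_{\mathfrak{g}}$ semisimple and $\mathfrak{z}=C(\mathfrak{g})$ abelian. Since a semisimple Lie algebra has no nonzero nilpotent ideals, its maximal nilpotent ideal coincides with the centre, whence $\dim_{\mathbb{F}}\mathfrak{z}=\dim_{\mathbb{F}}\Nil(\mathfrak{g})=r=\dim_{\mathbb{F}}V$; moreover the invariance of $B_{\mathfrak{g}}$ forces $\mathfrak{s}\perp\mathfrak{z}$, so that $B_{\mathfrak{g}}$ restricts to invariant metrics on each summand. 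I would then model $\mathfrak{G}$ on the cotangent-bundle picture produced in {\bf Lemma \ref{lema 4}.(ii)}: fixing the basis $\{v_1,\dots,v_r\}$ of $V$ and a perfect pairing identifying $V\simeq\mathfrak{z}^{\ast}$, define $B_{\mathfrak{G}}$ on $\mathfrak{G}=\mathfrak{s}\oplus\mathfrak{z}\oplus V$ by declaring $\mathfrak{s}\perp(\mathfrak{z}\oplus V)$, setting $B_{\mathfrak{G}}|_{\mathfrak{s}\times\mathfrak{s}}=B_{\mathfrak{g}}|_{\mathfrak{s}\times\mathfrak{s}}$, letting $V$ be isotropic, and pairing $\mathfrak{z}$ with $V$ through the dual pairing. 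This form is symmetric and non-degenerate by construction, and $V$ is isotropic by fiat; the content of the direction is therefore the verification of $\mathfrak{G}$-invariance.

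The invariance check is where I expect the main obstacle to lie. On the $\mathfrak{s}$-block it is automatic from the invariance of $B_{\mathfrak{g}}|_{\mathfrak{s}}$, and the mixed $\mathfrak{s}$--$(\mathfrak{z}\oplus V)$ terms vanish because $B_{\mathfrak{G}}(\mathfrak{s},\mathfrak{z}\oplus V)=\{0\}$ while $[\mathfrak{s},\mathfrak{z}\oplus V]_{\mathfrak{G}}=\{0\}$; the only genuine constraint comes from three-fold products inside $\mathfrak{z}\oplus V$. Writing the $2$-cocycle as $\theta(x,y)=\sum_i B_{\mathfrak{g}}(D_i(x),y)v_i$ via {\bf Proposition \ref{proposicion 1}} and normalising each $D_i$ modulo a coboundary, so that (by {\bf Proposition \ref{escision}}, up to the resulting isomorphism of extensions) $D_i$ vanishes on $\mathfrak{s}$ and restricts to a $B_{\mathfrak{g}}$-skew-symmetric endomorphism of $\mathfrak{z}$, invariance collapses to the cyclic identity $\omega(x,y)(z)=\omega(y,z)(x)$ for $\omega(a,b)=\sum_i B_{\mathfrak{g}}(D_i(a),b)a_i^{\ast}$, precisely the map appearing in {\bf Lemma \ref{lema 4}.(ii)}. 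The delicate point is thus to arrange the pairing $V\simeq\mathfrak{z}^{\ast}$ so that this cyclicity (equivalently, total antisymmetry of the associated $3$-tensor on $\mathfrak{z}$) holds; the safest route is to take the cotangent-bundle extension of {\bf Lemma \ref{lema 4}.(ii)} itself as the model, for which $\omega$ is cyclic by construction, and to transport the metric back along the natural isomorphism of central extensions. Once this identity is secured, $B_{\mathfrak{G}}$ is the sought invariant metric with $V$ isotropic, and the equivalence is complete.
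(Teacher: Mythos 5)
Your \emph{($\Leftarrow$)} direction is exactly the paper's (it, too, simply quotes \textbf{Lemma \ref{lema 4}.(ii)}), and your reduction in the \emph{($\Rightarrow$)} direction --- normalising the $D_i$ modulo coboundaries so that they vanish on $\mathfrak{s}$ and act skew-symmetrically on $\mathfrak{z}=C(\mathfrak{g})$, and observing that invariance of the proposed metric collapses to cyclicity of $\omega$ --- is correct as far as it goes. The gap is your last step: ``take the cotangent-bundle extension of \textbf{Lemma \ref{lema 4}.(ii)} itself as the model \dots and transport the metric back along the natural isomorphism of central extensions.'' There is no such isomorphism in general. Cyclicity of $\omega$ (equivalently, total antisymmetry of $(a,b,c)\mapsto B_{\mathfrak{G}}([a,b]_{\mathfrak{G}},c)$ on $\mathfrak{z}$) is a genuine restriction on the cocycle of the \emph{given} extension, not something that can be arranged by choosing the identification $V\simeq\mathfrak{z}^{*}$. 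Concretely: if an invariant metric with $V$ isotropic exists, then $V^{\perp}=V$, the pairing $\mathfrak{g}\times V\to\mathbb{F}$ induced by $B_{\mathfrak{G}}$ is perfect, and composing $\theta$ with the resulting isomorphism $\xi:V\to\mathfrak{g}^{*}$ must produce the contraction of a $3$-form in $\Lambda^{3}\mathfrak{z}^{*}$. Now take $\mathfrak{g}$ abelian of dimension $r=4$ (so $\mathfrak{g}$ is reductive, any non-degenerate symmetric form is an invariant metric, and $\Nil(\mathfrak{g})=\mathfrak{g}$ gives $r=\dim_{\mathbb{F}}V=4$). Here $B^{2}(\mathfrak{g},V)=\{0\}$ and $Z^{2}(\mathfrak{g},V)$ is the full $24$-dimensional space $\Lambda^{2}\mathfrak{g}^{*}\otimes V$, whereas the cocycles obtained from a $3$-form together with an isomorphism $V\to\mathfrak{g}^{*}$ sweep out a set of dimension at most $\binom{4}{3}+16=20$. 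Hence a generic central extension of this reductive quadratic Lie algebra admits \emph{no} invariant metric making $V$ isotropic, so the forward implication cannot be proved for an arbitrary given extension, which is how your write-up treats it.

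The paper sidesteps this because its proof of the ``only if'' part is existential: it does not metrize the given $\mathfrak{G}$, but \emph{constructs} an extension with cyclicity built in, choosing constants $\alpha_{ijk}$ with $\alpha_{ijk}=\alpha_{jki}=\alpha_{kij}$ and $\alpha_{ijk}=-\alpha_{ikj}$, setting $D_{j}(a_{k})=\sum_{i=1}^{r}\alpha_{ijk}a_{i}$ on $C(\mathfrak{g})$ and $D_{j}=0$ on the Levi summand, and then declaring $B_{\mathfrak{G}}=K_{\mathfrak{s}}\overset{\perp}{\oplus}B_{C(\mathfrak{g})\oplus V}$ with $C(\mathfrak{g})$ and $V$ an isotropic hyperbolic pair. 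Your argument becomes a correct proof of that (existential) statement if you do the same --- build the cocycle with the cyclic symmetry from the outset (even $\theta=0$ works) --- but as written, the ``transport back'' step asserts precisely what fails in general.
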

\begin{proof}

\noindent 
{\bf Lemma \ref{lema 4}.(ii)} proves the {\lq\lq}{\it if\/}{\rq\rq} part of the statement.
In order to prove the {\lq\lq}{\it only if\/}{\rq\rq} part, we shall 
assume that $(\mathfrak{g},[\cdot,\cdot]_{\mathfrak{g}})$ is a reductive Lie algebra with semisimple Levi summand $\mathfrak{s}$.
Then, $\Nil(\mathfrak{g})=\Rad(\mathfrak{g})=C(\mathfrak{g})$, and $\mathfrak{g}=\mathfrak{s} \oplus C(\mathfrak{g})$. 
Let $\{a_1,\ldots,a_r\}$ be a basis for $C(\mathfrak{g})$ and define a non-degenerate symmetric bilinear form $B_{C(\mathfrak{g})}:C(\mathfrak{g}) \times C(\mathfrak{g}) \rightarrow \mathbb{F}$, by $B_{C(\mathfrak{g})}(a_i,a_j)=\delta_{ij}$. 
Now let the invariant metric $B_{\mathfrak{g}}$ in $(\mathfrak{g},[\cdot,\cdot]_{\mathfrak{g}})$ 
be defined by $K_{\mathfrak{s}} \overset{\perp}{\oplus} B_{C(\mathfrak{g})}$, where $K_{\mathfrak{s}}$ is the 
Cartan-Killing form in the semisimple Lie algebra $(\mathfrak{s},[\cdot,\cdot]_{\mathfrak{s}})$. 

\smallskip
\noindent 
Let $\alpha_{ijk} \in \mathbb{F}$ be such that $\alpha_{ijk}=\alpha_{jki}=\alpha_{kij}$ 
and $\alpha_{ijk}=-\alpha_{ikj}$, for all $1\le i,j,k\le r$. 
For each $1\le j\le r$, we define a linear map $D_j\in\End_{\mathbb{F}} C(\mathfrak{g})$, by letting,
$D_j(a_k)=\overset{r}{\underset{i=1}{\sum}}\alpha_{ijk}\,a_i$, for each $1\le k\le r$. 
The requirements on the $\alpha$'s show that the $D_j$'s are $B_{C(\mathfrak{g})}$-skew-symmetric.

\smallskip
\noindent 
Let $V$ be an $r$-dimensional vector space, with a given basis $\{v_i\mid 1\le i\le r\}$.
Now let $(\mathfrak{G},[\cdot,\cdot]_{\mathfrak{G}})$ be the central extension of $(\mathfrak{g},[\cdot,\cdot]_{\mathfrak{g}})$ by $V$
associated to the $2$-cocycle $\theta \in Z^2(\mathfrak{g},V)$, defined by $\theta(s+a,s^\prime+a^\prime)=\overset{r}{\underset{i=1}\sum}B_{C(\mathfrak{g})}(D_i(a),a^\prime)v_i$, for all $s,s^{\prime} \in \mathfrak{s}$ and $a,a^{\prime} \in \mathfrak{a}$.
Thus,
\begin{equation}\label{reductiva1}
[s+a+u,s'+c+v]_{\mathfrak{G}}=[s,s']_{\mathfrak{s}}+\overset{r}{\underset{i=1}{\sum}}B_{C(\mathfrak{g})}(D_i(a),c)v_i.
\end{equation}
Clearly, the projection $\pi_{\mathfrak{g}}:\mathfrak{G} \rightarrow \mathfrak{g}$ is a Lie algebra morphism. 
Now, the Lie bracket $[\cdot,\cdot]_{\mathfrak{G}}$ just defined also shows that $C(\mathfrak{g})\oplus V$
is an ideal in $\mathfrak{G}$, and its restriction to $C(\mathfrak{g})\oplus V$
yields the Lie bracket,
$$
[a+u,c+v]_{C(\mathfrak{g}) \oplus V}=\sum_{i=1}^rB_{C(\mathfrak{g})}(D_i(a),c)v_i,\,\,\forall a,c \in \mathfrak{a},\,\,u,v \in V.
$$
Now, define an invariant metric $B_{C(\mathfrak{g}) \oplus V}$ on $C(\mathfrak{g}) \oplus V$
by making $C(\mathfrak{g})$ and $V$ isotropic, and letting $V$ and $C(\mathfrak{g})$ be an hyperbolic pair: $B_{C(\mathfrak{g}) \oplus V}(a_i,v_j)=\delta_{ij}$, for all $1\le i,j\le r$.
%$$
%B_{C(\mathfrak{g}) \oplus V}(a_i,v_j)=\delta_{ij},\qquad 1\le i,j\le r.
%$$
It is a straightforward matter to verify that this metric is invariant indeed. 
On the other hand, define $B_{\mathfrak{G}}$ on $\mathfrak{G}$ by means of $B_{\mathfrak{G}}=K_{\mathfrak{s}} \overset{\perp}{\oplus} B_{C(\mathfrak{g}) \oplus V}$.
%$$
%B_{\mathfrak{G}}=K_{\mathfrak{s}} \overset{\perp}{\oplus} B_{C(\mathfrak{g}) \oplus V}.
%$$
In particular, $V$ is an isotropic ideal of $(\mathfrak{G},[\cdot,\cdot]_{\mathfrak{G}},B_{\mathfrak{G}})$ and, 
$$
(\mathfrak{G},[\cdot,\cdot]_{\mathfrak{G}},B_{\mathfrak{G}})=(\mathfrak{s},[\cdot,\cdot]_{\mathfrak{s}},K_{\mathfrak{s}}) \overset{\perp}{\oplus} (C(\mathfrak{g}) \oplus V,[\cdot,\cdot]_{C(\mathfrak{g}) \oplus V},B_{C(\mathfrak{g}) \oplus V}).
$$
In fact, the structure of the quadratic Lie algebra in $C(\mathfrak{g}) \oplus V$ is that of the cotangent bundle extension of $C(\mathfrak{g})$ by $V$, where $V$ is identified with $C(\mathfrak{g})^{*}$ under the correspondence $v_k \mapsto a_k^{*}$
and $\omega:C(\mathfrak{g}) \times  C(\mathfrak{g}) \rightarrow C(\mathfrak{g})^{*}$ is defined by,
$$
\omega(a_j,a_k)=
\sum_{i=1}^rB_{C(\mathfrak{g})}(D_i(a_j),a_k)\,a_i^*
=
\sum_{i=1}^r\alpha_{kij}\,a_i^*
=
\sum_{i=1}^r\alpha_{ijk}\,a_i^*
,\,\,\, 1\le i,j,k\le r.
$$
\end{proof}

\noindent For indecomposable $2$-step nilpotent quadratic Lie algebras, we have:

\begin{proposicion}\label{proposicion 2-nilpotente}
Let $(\mathfrak{g},[\cdot,\cdot]_{\mathfrak{g}},B_{\mathfrak{g}})$ be a  $2r$-dimensional,
indecomposable, $2$-step nilpotent, quadratic Lie algebra over $\Bbb F$,
with $\mathfrak{g}=\mathfrak{a} \oplus \mathfrak{b}$, where
$\mathfrak{b}=C(\mathfrak{g})=[\mathfrak{g},\mathfrak{g}]_{\mathfrak{g}}$, 
$\dim_{\mathbb{F}}(\mathfrak{a})=\dim_{\mathbb{F}}(\mathfrak{b})=r$, and 
$\mathfrak{a}$ and $\mathfrak{b}$ are isotropic with respect to $B_{\mathfrak{g}}$. 
Let $\{a_i\,|\,i=1,..,r\}$, and $\{b_i\,|\,i=1,\ldots,r\}$ be bases for 
$\mathfrak{a}$ and $\mathfrak{b}$, respectively,
such that $B_{\mathfrak{g}}(a_i,b_j)=\delta_{ij}$. 
Let $\alpha_{ijk} \in \mathbb{F}$ be the structure constants of  $\mathfrak{g}$,
so that $[a_j,a_k]_{\mathfrak{g}}=\overset{r}{\underset{i=1}\sum}\alpha_{ijk}b_i$.
For each index $1\le j\le r$, let $A_j$ be the $r\times r$ matrix whose $(i,k)$ entry is $\alpha_{ijk}$.
Then $\mathcal{A}=\span_{\mathbb{F}}\{A_1,\ldots,A_r\}$ is a linearly independent set.
Moreover, 
there exists a central extension $(\mathfrak{G},[\cdot,\cdot]_{\mathfrak{G}})$
of $(\mathfrak{g},[\cdot,\cdot]_{\mathfrak{g}})$ by
an $r$-dimensional vector space $V$ and an invariant metric
$B_{\mathfrak{G}}:\mathfrak{G}\times\mathfrak{G}\to\mathbb{F}$
making $V$ isotropic,
if and only if there exists an invertible symmetric matrix $\mu \in \Mat_{r \times r}(\mathbb{F})$ 
for which the skew-symmetric, bilinear pairing defined through,
$$
 [A_j,A_k]_{\mathcal{A}}=A_j \mu A_k-A_k \mu A_j=\overset{r}{\underset{i=1}\sum}(\mu A_j)_{ik}A_i,
 \qquad 1\le j,k\le r,
$$
makes $(\mathcal{A},[\cdot,\cdot]_{\mathcal{A}})$ into an $r$-dimensional Lie algebra,
in which case the map $B_{\mathcal{A}}:\mathcal{A}\times \mathcal{A}\to\mathbb{F}$
given by $B_{\mathcal{A}}(A_j,A_k)=(\mu^{-1})_{jk}$, defines an invariant metric
turning $(\mathcal{A},[\cdot,\cdot]_{\mathcal{A}},B_\mathcal{A})$ into a 
perfect, indecomposable, quadratic Lie algebra.
\end{proposicion}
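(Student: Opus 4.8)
The plan is to convert the hypotheses into symmetries of the structure constants, build an explicit dictionary between the analytic data of Lemmas \ref{lema 2}--\ref{lema 31} and the matrix pair $(\mu,\{A_j\})$, and then read the equivalence off that dictionary. First I would record that invariance forces $\alpha_{ijk}$ to be totally antisymmetric: comparing $B_{\mathfrak{g}}(a_i,[a_j,a_k]_{\mathfrak{g}})=\alpha_{ijk}$ with $B_{\mathfrak{g}}([a_i,a_j]_{\mathfrak{g}},a_k)=\alpha_{kij}$ (using $B_{\mathfrak{g}}(a_i,b_l)=\delta_{il}$ and the isotropy of $\mathfrak{a},\mathfrak{b}$) gives the cyclic identity $\alpha_{ijk}=\alpha_{kij}$, which together with $\alpha_{ijk}=-\alpha_{ikj}$ makes each $A_j$ a skew-symmetric matrix. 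Linear independence of $\{A_1,\dots,A_r\}$ is then immediate: if $\sum_jc_jA_j=0$, the vector $x=\sum_jc_ja_j\in\mathfrak{a}$ satisfies $[x,a_k]_{\mathfrak{g}}=\sum_i(\sum_jc_j\alpha_{ijk})b_i=0$ and $[x,\mathfrak{b}]_{\mathfrak{g}}=0$, so $x\in C(\mathfrak{g})\cap\mathfrak{a}=\mathfrak{b}\cap\mathfrak{a}=\{0\}$, whence all $c_j=0$.

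For the equivalence I would exploit the self-adjoint centroid $T=\pi_{\mathfrak{g}}\circ k$. When $V$ is isotropic, Lemma \ref{lema 31}.(ii) gives $\dim\Ker T=r$, while Lemma \ref{lema 2}.(ii) gives $\Ker T\subseteq C(\mathfrak{g})=\mathfrak{b}$; since $\dim\mathfrak{b}=r$ this forces $\Ker T=\mathfrak{b}$, hence $\Im T=\mathfrak{b}^{\perp}=\mathfrak{b}$ and $T^{2}=0$. Thus $T$ annihilates $\mathfrak{b}$ and carries $\mathfrak{a}$ isomorphically onto $\mathfrak{b}$ by an invertible symmetric matrix $\tau$; set $\mu=\tau^{-1}$. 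Because every $B_{\mathfrak{g}}$-skew derivation preserves $\mathfrak{b}=[\mathfrak{g},\mathfrak{g}]_{\mathfrak{g}}=C(\mathfrak{g})$, it is determined by an $\mathfrak{a}\to\mathfrak{a}$ block $P_i$, a skew $\mathfrak{a}\to\mathfrak{b}$ block, and the forced block $-P_i^{T}$ on $\mathfrak{b}$; Lemma \ref{lema 2}.(vii), written in this block form, reads $\tau P_i=A_i^{L}$, where $A_i^{L}$ is the lower block of $\ad_{\mathfrak{g}}(a_i^{L})$, so $P_i=\mu A_i^{L}$. Expanding the Leibniz rule $D_i[a_j,a_k]_{\mathfrak{g}}=[D_ia_j,a_k]_{\mathfrak{g}}+[a_j,D_ia_k]_{\mathfrak{g}}$ entrywise collapses, using $P_i^{T}=-A_i^{L}\mu$, to the matrix identity $A_i^{L}\mu A_j-A_j\mu A_i^{L}=\sum_m(\mu A_i^{L})_{mj}A_m$; since $\{a_i^{L}\}$ and $\{a_p\}$ are bases of complements to $\mathfrak{b}$, the $A_i^{L}$ are an invertible linear combination of the $A_p$, and this propagates to $A_p\mu A_j-A_j\mu A_p=\sum_m(\mu A_p)_{mj}A_m$ for the prescribed basis. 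This is exactly the stated bracket, it lies in $\mathcal{A}$, and the Jacobi identity is automatic because $X\mapsto X\mu$ intertwines $[\cdot,\cdot]_{\mathcal{A}}$ with the ordinary commutator on $\mathcal{A}\mu$. This proves the ``only if'' direction and produces $\mu$.

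For ``if'' I would reverse the dictionary. Given symmetric invertible $\mu$ realizing the bracket, define $D_i$ by the block data $P_i=\mu A_i$, zero $\mathfrak{a}\to\mathfrak{b}$ block, and $-P_i^{T}$ on $\mathfrak{b}$; it is $B_{\mathfrak{g}}$-skew, and the bracket identity is precisely its Leibniz rule, so $D_i\in\Der\mathfrak{g}\cap\mathfrak{o}(B_{\mathfrak{g}})$. Proposition \ref{proposicion 1} then yields a central extension $\mathfrak{G}=\mathfrak{g}\oplus V$, and I would define $B_{\mathfrak{G}}$ by making $\mathfrak{a}$ and $V$ isotropic, $\mathfrak{a}\perp\mathfrak{b}$, $\mathfrak{b}\perp V$, $B_{\mathfrak{G}}(a_j,v_k)=\delta_{jk}$ and $B_{\mathfrak{G}}(b_j,b_k)=\mu_{jk}$. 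Its Gram matrix is block-antidiagonal with invertible middle block $\mu$, so $B_{\mathfrak{G}}$ is non-degenerate with $V$ isotropic; invariance comes down to a handful of triples, each reducing to an identity that holds by the symmetry of $\mu$ or the antisymmetry of $\alpha$ alone (for instance $B_{\mathfrak{G}}([a_i,a_j]_{\mathfrak{G}},b_k)=(\mu A_i)_{kj}=B_{\mathfrak{G}}(a_i,[a_j,b_k]_{\mathfrak{G}})$). Note that the genuine cocycle content is consumed entirely by the Jacobi identity of $\mathfrak{G}$, not by metric invariance.

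Finally, $B_{\mathcal{A}}(A_j,A_k)=(\mu^{-1})_{jk}$ is symmetric and non-degenerate since $\mu^{-1}$ is, and a short computation gives $B_{\mathcal{A}}([A_i,A_j]_{\mathcal{A}},A_k)=(A_i)_{kj}=\alpha_{kij}$ together with $B_{\mathcal{A}}(A_i,[A_j,A_k]_{\mathcal{A}})=\alpha_{ijk}$, which agree by the cyclic symmetry of $\alpha$; hence $B_{\mathcal{A}}$ is invariant. Perfectness follows from centrelessness, since for a quadratic Lie algebra $C(\mathcal{A})=[\mathcal{A},\mathcal{A}]^{\perp}$: a central $A_z=\sum_jc_jA_j$ gives $\sum_m(\mu A_z)_{mk}A_m=0$ for every $k$, so $\mu A_z=0$ and $A_z=0$. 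Indecomposability I would obtain by transfer: an orthogonal ideal splitting $\mathcal{A}=\mathcal{A}_1\overset{\perp}{\oplus}\mathcal{A}_2$ makes $\alpha_{kij}=B_{\mathcal{A}}([A_i,A_j]_{\mathcal{A}},A_k)$ vanish whenever the indices are not all in one block, so in an adapted basis of $\mathfrak{a}$ (with the dual basis of $\mathfrak{b}$) the three-form $\alpha$ is block-diagonal and $\mathfrak{g}$ splits as an orthogonal sum of two quadratic ideals, contradicting its indecomposability. The step I expect to be most delicate is fixing the dictionary---proving $\Ker T=\mathfrak{b}$, the symmetry and invertibility of $\tau$, and the clean identification $P_i=\mu A_i$ relative to the prescribed bases---after which every remaining assertion is a finite entrywise verification.
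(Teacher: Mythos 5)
Your proposal is correct and follows essentially the same route as the paper's proof: the same dictionary $\Ker T=\mathfrak{b}=\Im T$, $T^2=0$, $\mu=\lambda^{-1}$, the same block decomposition of the skew derivations with $P_j=\mu A_j$ so that the Leibniz rule becomes the matrix bracket identity, the same metric on $\mathfrak{G}$ in the converse (yours is literally the paper's $B_{\mathfrak{G}}(x,y)=B_{\mathfrak{g}}(h(x),y)$ written as a Gram matrix, with the special choice $e_j=0$ where the paper allows arbitrary cyclic $e_{ijk}$), and the same transfer of an orthogonal splitting of $\mathcal{A}$ back to one of $\mathfrak{g}$ for indecomposability. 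Your two small additions --- deducing Jacobi for $[\cdot,\cdot]_{\mathcal{A}}$ from the $\mu$-homotope embedding $X\mapsto X\mu$, and propagating the bracket identity by linearity from the Lemma~\ref{lema 2} elements to the prescribed basis (the paper avoids this by applying $\rho$ directly to the prescribed $a_j$, using that Lemma~\ref{lema 2}.(vii) holds for all $x\in\mathfrak{g}$) --- are valid fillings of details the paper leaves implicit, not a different argument.
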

\begin{proof}

\noindent 
We shall first prove that
the matrices $A_1,\ldots, A_r$ are linearly independent. Assume $\overset{r}{\underset{i=1}\sum}\nu_i A_i=0$
for some scalars $\nu_i\in\mathbb{F}$, and consider 
$a=\overset{r}{\underset{i=1}\sum}\nu_i a_i \in \mathfrak{a}$.
Since $[a,a_k]_{\mathfrak{g}}=\overset{r}{\underset{i,j=1}\sum}(\nu_iA_i)_{jk}\,b_j=
0$, for each $1\le k\le r$,
it follows that $a\in C(\mathfrak{g})\cap \mathfrak{a}=\{0\}$, and hence $\nu_i=0$ for all $1\le i\le r$.

\smallskip
\noindent 
$(\Rightarrow)$ 
\noindent Suppose the central extension $\mathfrak{G}=\mathfrak{g} \oplus V$ 
admits an invariant metric $B_{\mathfrak{G}}$ that makes $V$ isotropic. 
Then, there exist linear maps $h:\mathfrak{G}\to\mathfrak{g}$ and $k:\mathfrak{g}\to\mathfrak{G}$
as in {\bf Lemma \ref{lema 1}}. Now, {\bf Lemma \ref{lema 2}.(vi)} and {\bf Lemma \ref{lema 31}.(iv)}-{\bf (v)} show that there exists a $B_{\mathfrak{g}}$-symmetric linear map $T:\mathfrak{g}\to\mathfrak{g}$ such that
$\Ker T=\mathfrak{b}=C(\mathfrak{g})$ and $\Im T=[\mathfrak{g},\mathfrak{g}]_{\mathfrak{g}}=\mathfrak{b}$, so $\Ker T=\Im T$, $T^{2}=0$ and $T|_{\mathfrak{a}}:\mathfrak{a} \rightarrow \mathfrak{b}$ is bijective.

\smallskip
\noindent 
Notice that {\bf Remark \ref{3rmk}.2} (after {\bf Lemma \ref{lema 31}}), says that $\mathfrak{a}$ has a Lie bracket $[\cdot,\cdot]_{\mathfrak{a}}:\mathfrak{a}\times\mathfrak{a}\to\mathfrak{a}$, and an invariant metric $B_{\mathfrak{a}}:\mathfrak{a}\times\mathfrak{a}\to\mathbb{F}$ with respect to it, defined by
$B_{\mathfrak{a}}(a,a^\prime)=B_{\mathfrak{g}}(T(a),a^\prime)$.
Moreover, in this case $[\cdot,\cdot]_{\mathfrak{a}}$ is identically zero and
$(\mathfrak{g},[\cdot,\cdot]_{\mathfrak{g}})$ can be viewed as a central extension of 
the Abelian Lie algebra $\mathfrak{a}$ by $\mathfrak{b}$:
$$
0 \rightarrow \mathfrak{b} \rightarrow \mathfrak{g} \rightarrow \mathfrak{a} \rightarrow 0.
$$
\noindent 
Let $\lambda \in \Mat_{r \times r}(\mathbb{F})$ be the matrix with entries $\lambda_{ij}=B_{\mathfrak{a}}(a_i,a_j)$
and let $\mu=\lambda^{-1}$.
Then, $T(a_j)=\overset{r}{\underset{i=1}\sum}\lambda_{ij}b_i$, for all $j$.

\smallskip
\noindent 
By {\bf Lemma \ref{lema 2}}, there exists a linear map $\rho:\mathfrak{G} \rightarrow \Der\mathfrak{g} \cap\frak{o}(B_{\g})$, such that $\rho(x)(y)=h([x,y]_{\mathfrak{G}})$ for all $x \in \mathfrak{G}$ and $y \in \mathfrak{g}$. Now consider the $B_{\mathfrak{g}}$-skew-symmetric derivations 
$D_j \in \Der \mathfrak{g}$, 
defined by $D_j=\rho(a_j)$, for $1 \leq j \leq r$. Let $d_{ijk},e_{ijk},f_{ijk} \in \mathbb{F}$ be the scalars such that:
$$
D_j(a_k)=\sum_{i=1}^rd_{ijk}a_i+\sum_{i=1}^r e_{ijk}b_i,
\qquad\text{and}\qquad
D_j(b_k)=\sum_{i=1}^rf_{ijk}b_i.
$$
Let $d_j$, $e_j$ and $f_j$ be the $r \times r$-matrices whose $(i,k)$-entries are $d_{ijk}$, $e_{ijk}$ and $f_{ijk}$, respectively. It is straightforward to verify that the $B_{\mathfrak{g}}$-skew-symmetry property for $D_j$ implies $f_j=-d_j^t$ and $e_j^t=-e_j$, for $1 \leq j \leq r$.
\smallskip

\noindent 
It follows from Lemma \ref{lema 2}.(v) that, $D_j \circ T=T \circ D_j=\ad_{\mathfrak{g}}(a_j)$. 
Therefore, $d_j=\mu A_j$, for $1 \leq j \leq r$. So, the matrix of $D_j$ in the basis $\{a_j,b_k\,\mid 1\le j,k\le r\}$ has the block form, $D_j=\left(\begin{smallmatrix} \mu A_j & 0\\
e_j & A_j \mu \end{smallmatrix}\right)$.
%$$
%D_j
%=\left(
%\begin{array}{cc}
%\mu A_j & 0\\
%e_j & A_j \mu
%\end{array}
%\right).
%$$
Notice that none of the $D_j$'s is inner. If one of them is, then the corresponding upper
left block $\mu A_j$ would have to vanish. That is, $A_j=0$, thus implying that $a_j\in C({\mathfrak g})
=\operatorname{Span}_{\mathbb{F}}\{b_i\mid 1\le i \le r\}$, which cannot be the case.

\smallskip
\noindent
Now, from Leibniz rule $D_j([a_k,a_l]_{\mathfrak{g}})=[D_j(a_k),a_l]_{\mathfrak{g}}+[a_k,D_j(a_l)]_{\mathfrak{g}}$, 
we get,
$$
A_j\mu A_k-A_k \mu A_j=\sum_{i,\ell=1}^r \mu_{i\ell}\alpha_{\ell jk}A_i=\sum_{i=1}^r(\mu A_j)_{ik}A_i.
$$
This expression suggests to consider the skew-symmetric bilinear map
 $[\cdot,\cdot]_{\mathcal{A}}:\mathcal{A} \times \mathcal{A} \rightarrow \mathcal{A}$, defined 
on $\mathcal{A}=\operatorname{Span}_{\mathbb{F}}\{A_i\mid 1\le i\le r\}$ by,
$$
[A_j,A_k]_{\mathcal{A}}=A_j\mu A_k-A_k \mu A_j,\,\,\,1\le j,k\le r,
$$ 
and prove that $(\mathcal{A},[\cdot,\cdot]_{\mathcal{A}})$ is in fact an $r$-dimensional Lie algebra.
Furthermore, define the bilinear form $B_{\mathcal{A}}:\mathcal{A} \times \mathcal{A} \rightarrow \mathbb{F}$, 
by $B_{\mathcal{A}}(A_j,A_k)=\lambda_{jk}$ for all $j,k$. We can prove that, $B_{\mathcal{A}}(A_j,[A_k,A_l]_{\mathcal{A}})=B_{\mathcal{A}}([A_j,A_k]_{\mathcal{A}},A_l)$, for all $1\le j,k,l\le r$.
%$$
%B_{\mathcal{A}}(A_j,[A_k,A_l]_{\mathcal{A}})=B_{\mathcal{A}}([A_j,A_k]_{\mathcal{A}},A_l),\,\,\, 1\le j,k,l\le r.
%$$
Indeed, this can be done by using the fact that the structure constants $\alpha_{ijk}$
satisfy the cyclic conditions $\alpha_{ijk}=\alpha_{kij}=-\alpha_{kji}$. This property in turn,
follows from the hypotheses on the decomposition $\mathfrak{g}=\mathfrak{a}\oplus \mathfrak{b}$,
where $\mathfrak{a}$ and $\mathfrak{b}$ are isotropic for $B_{\mathfrak{g}}$, and
$B_{\mathcal{A}}(A_j,A_k)=
\lambda_{ij}=B_{\mathfrak{a}}(a_i,a_j)=B_{\mathfrak{g}}(T(a_i),a_j)$.
Therefore $(\mathcal{A},[\cdot,\cdot]_{\mathcal{A}},B_{\mathcal{A}})$ is an 
$r$-dimensional quadratic Lie algebra. 

\smallskip
\noindent 
We shall now prove that $(\mathcal{A},[\cdot,\cdot]_{\mathcal{A}})$ has trivial center.
Since $[A,A_k]_{\mathcal{A}}=\overset{r}{\underset{i=1}\sum}(\mu A)_{ik}A_i$ for all $k$, 
it is clear that $A \in C(\mathcal{A})$ implies $\mu A=0$, and hence, $A=0$. 
This shows that the quadratic Lie algebra $(\mathcal{A},[\cdot,\cdot]_{\mathcal{A}},B_{\mathcal{A}})$ is perfect.

\smallskip
\noindent 
We shall now prove that $(\mathcal{A},[\cdot,\cdot]_{\mathcal{A}},B_{\mathcal{A}})$ is indecomposable. 
Indeed, let $\mathcal{A}_1,\mathcal{A}_2$ be non-degenerate ideals of $\mathcal{A}$, such that 
$\mathcal{A}=\mathcal{A}_1 \overset{\perp}{\oplus} \mathcal{A}_2$. 
Let $\{E_1,\ldots,E_{r_1}\}$ and $\{E_{r_1+1},\ldots,E_{r_1+r_2}\}$ be bases for 
$\mathcal{A}_1$ and $\mathcal{A}_2$, respectively, such that, $B_{\mathcal{A}}(E_j,E_k)=\delta_{jk}$. 
Let $\gamma_{ijk} \in \mathbb{F}$ be the structure constants with respect to the basis 
$\{E_1,\ldots,E_{r}\}$; {\it ie\/,}
$[E_j,E_k]_{\mathcal{A}}=\overset{r}{\underset{i=1}\sum}\gamma_{ijk}E_i$. Then, $\gamma_{ijk}=\gamma_{jki}=-\gamma_{jik}$, for all $1\le i,j,k\le r$. Since the direct sum decomposition $\mathcal{A}
=\mathcal{A}_1\oplus\mathcal{A}_2$ is orthogonal, and since
$\mathcal{A}_1$ and $\mathcal{A}_2$ are ideals, it follows that 
$\gamma_{ijk}=0$ if $j \in \{1,\ldots,r_1\}$ and $k\in \{r_1+1,\ldots,r_1+r_2\}$, for each $i$.
As we did before, we shall denote by $\gamma_j\in \Mat_{r \times r}(\mathbb{F})$
the matrix whose $(i,k)$-entry is $\gamma_{ijk}$  ($1 \leq j \leq r$).
Let $\tau=(\tau_{ij}) \in \Mat_{r \times r}(\mathbb{F})$ be the invertible matrix for which 
$E_j=\overset{r}{\underset{i=1}\sum}\tau_{ij}A_i$, for all $j$. Then, $\tau^t \lambda\, \tau=
\operatorname{Id}_{r \times r}$. On the other hand,
$$
[E_j,E_k]_{\mathcal{A}}=\sum_{l=1}^r\gamma_{ljk}E_l=\sum_{i,l=1}^r\gamma_{ljk}\tau_{il}A_i
=\sum_{i=1}^r(\tau \gamma_j)_{ik}A_i.
$$
Similarly,
$$
[E_j,E_k]_{\mathcal{A}}=\!\!\sum_{s,t=1}^r\tau_{sj}\tau_{tk}[A_s,A_t]_{\mathcal{A}}\,=\!\!\!\sum_{i,l,s,t=1}^r\tau_{sj}\tau_{tk}\mu_{il}\alpha_{lst}A_i
=\sum_{i=1}^r(\mu E_j \tau)_{ik}A_i.
$$
Then $(\lambda \tau)\gamma_j=E_j \tau$, and $\gamma_j=\tau^t E_j \tau$, for $1 \leq j \leq r$.

\smallskip
\noindent 
Let $e_j=\overset{r}{\underset{i=1}\sum}\tau_{ij}a_i \in \mathfrak{a}$ and $e'_k=\overset{r}{\underset{l=1}\sum}(\lambda\tau)_{lk}b_l \in \mathfrak{b}$. Since $\lambda$ and $\tau$ are invertible matrices, the set $\{e_j,e'_k\,|\,j,k\}$ is a basis for $(\mathfrak{g},[\cdot,\cdot]_{\mathfrak{g}},B_{\mathfrak{g}})$ satisfying:
$$
B_{\mathfrak{g}}(e_j,e'_k)=\delta_{jk},
\qquad\text{and}\qquad
[e_j,e_k]_{\mathfrak{g}}=\sum_{i=1}^r\gamma_{ijk}e'_i,\quad 1 \le j,k \le r.
$$
This implies, however, that the subspaces $I=\span_{\mathbb{F}}\{e_1,\ldots,e_{r_1},e'_1,\ldots,e'_{r_1}\}$ and $J=\span_{\mathbb{F}}\{e_{r_1+1},\ldots,e_{r_1+r_2},e'_{r_1},\ldots,e'_{r_1+r_2}\}$ are non-degenerate ideals of $(\mathfrak{g},[\cdot,\cdot]_{\mathfrak{g}})$,
such that, $\mathfrak{g}=I \overset{\perp}{\oplus} J$ with respect to the invariant metric $B_{\mathfrak{g}}$, but this contradicts
the fact that $\mathfrak g$ is indecomposable. 
Therefore, $(\mathcal{A},[\cdot,\cdot]_{\mathcal{A}},B_{\mathcal{A}})$ is an $r$-dimensional, quadratic,
indecomposable and perfect Lie algebra.
\smallskip

\noindent $(\Leftarrow)$ In order to consider central extensions of $(\mathfrak{g},[\cdot,\cdot]_{\mathfrak{g}},B_{\mathfrak{g}})$, 
{\bf Proposition \ref{proposicion 1}} says that some $B_{\mathfrak{g}}$-skew-symmetric derivations are needed. So, based on the hypotheses 
we shall produce $r$ derivations $D_1,\ldots,D_r \in \Der\mathfrak{g}$, which will be $B_{\mathfrak{g}}$-skew-symmetric. Let us consider scalars $e_{ijk} \in \mathbb{F}$ such that $e_{ijk}=e_{jki}=-e_{ikj}$, with $1\le i,j,k\le r$, and for each $j$, let $e_j \in \Mat_{r \times r}(\mathbb{F})$ be the matrix whose $(i,k)$-entry is, $e_{ijk}$. Let $D_j \in \End\mathfrak{g}$ be the linear map whose matrix
in the basis $\{a_j,b_k\,|\,j,k=1,\ldots,r\}$ is $D=\left(
\begin{smallmatrix}
\mu A_j & 0\\
e_j & A_j \mu
\end{smallmatrix}
\right)$.
%$$
%D_j=\left(
%\begin{array}{cc}
%\mu A_j & 0\\
%e_j & A_j \mu
%\end{array}
%\right).
%$$
The fact that $[\cdot,\cdot]_{\mathcal{A}}$ is a Lie bracket in $\mathcal{A}$, implies that each $D_j$ satisfies the Leibniz rule.
On the other hand, since $\mathfrak a=\operatorname{Span}_{\mathbb{F}}\{a_j\}$ and $\mathfrak b=\operatorname{Span}_{\mathbb{F}}\{b_j\}$ form a hyperbolic pair for $\mathfrak{g}$, it follows that each $D_j$ is skew-symmetric with respect to $B_{\mathfrak{g}}$.

\smallskip
\noindent
Let $V=\operatorname{Span}_{\mathbb{F}}\{v_1,\ldots,v_r\}$ be an $r$-dimensional vector space and let
$\mathfrak{G}=\mathfrak{g} \oplus V$ be the central extension of $(\mathfrak{g},[\cdot,\cdot]_{\mathfrak{g}})$ by $V$, 
associated to the $2-$cocycle $\theta \in Z^2(\mathfrak{g},V)$, given by,
$$
\theta(x,y)=\sum_{i=1}^rB_{\mathfrak{g}}(D_i(x),y)v_i,\,\,\,\forall x,y \in \mathfrak{g}.
$$
In order to construct an invariant metric $B_{\mathfrak{G}}$ in $(\mathfrak{G},[\cdot,\cdot]_{\mathfrak{G}})$, we need linear maps $h:\mathfrak{G} \rightarrow \mathfrak{g}$ and $k:\mathfrak{g} \rightarrow \mathfrak{G}$ (see Lemma \ref{lema 1}), satisfying,
\begin{itemize}
\item[(i)] $h \circ k=\operatorname{Id}_{\mathfrak{g}}$,
\item[(ii)] $B_{\mathfrak{G}}(x,y)=B_{\mathfrak{g}}(h(x),y)$ for all $x \in \mathfrak{G}$ and $y \in \mathfrak{g}$,
\item[(iii)] $B_{\mathfrak{g}}(x,y)=B_{\mathfrak{G}}(k(x),y+v)$, for all $x,y \in \mathfrak{g}$ and $v \in V$,
\item[(iv)] $\Ker h=\mathfrak{g}^{\perp}$,
\item[(v)] $\Im k=V^{\perp}$.
\end{itemize} 
Now let $T:\mathfrak{g}\to\mathfrak{b}$ be the projection operator onto $\mathfrak{b}$ defined by
$T(a_j)=\overset{r}{\underset{i=1}\sum}\lambda_{ij}b_i$ and $T(b_j)=0$ for all $1 \leq j \leq r$.
It follows that 
\begin{itemize}
\item $\Ker T=\Im T=\mathfrak{b}$, 
\item $T|_{\mathfrak{a}}:\mathfrak{a} \rightarrow \mathfrak{b}$ is bijective,
\item $T^2=0$. 
\end{itemize}
In fact, the matrix of $T$ in the given basis is $T=\left(\begin{smallmatrix}0 & 0\\
\lambda & 0 \end{smallmatrix}\right)$.
%\begin{equation}
%T=\left(
%\begin{array}{cc}
%0 & 0\\
%\lambda & 0
%\end{array}
%\right)
%\end{equation}
Since $\lambda$ is a symmetric matrix, it follows that $T$ is $B_{\mathfrak{g}}$-symmetric.
Now, notice that any linear map in $\End\mathfrak{g}$ with image in $\mathfrak{b}$, 
commutes with the adjoint representation.
Therefore, $T\in\Gamma_{B_{\mathfrak{g}}}(\mathfrak{g})$.
Now, define $k:\mathfrak{g} \rightarrow \mathfrak{G}$ by, 
$$
k(x)=T(x)+\overset{r}{\underset{i=1}\sum}B_{\mathfrak{g}}(a_i,x)v_i,\qquad\text{for all\ }\ x \in \mathfrak{g},
$$
and define $h:\mathfrak{G} \rightarrow \mathfrak{g}$ by, 
$$
h(a_j)=0,\qquad h(b_j)=\sum_{i=1}^r\mu_{ij}a_i,\qquad h(v_j)=b_j.
$$
With this data we may now define the following symmetric bilinear form $B_{\mathfrak{G}}:\mathfrak{G} \times \mathfrak{G} \rightarrow \mathbb{F}$: $B_{\mathfrak{G}}(x,y)=B_{\mathfrak{g}}(h(x),y)$, for all $x \in \mathfrak{G}$, $y \in \mathfrak{g}$, and $B_{\mathfrak{G}}(u,v)=0$, for all $u,v \in V$. It is now easy to see that $B_{\mathfrak{G}}$ is invariant and non-degenerate. Moreover, it makes $V$ isotropic, and
that it establishes a duality relationship between $\mathfrak{a}$ and $V$; {\it ie\/,} $\mathfrak{a}^*\simeq V$.
\end{proof}

\noindent 
Here is an example that illustrates the importance of the above result.

\begin{ejemplo}\label{ejemplo-correccion}
Let $(\mathfrak{g},[\cdot,\cdot]_{\mathfrak{g}},B_{\mathfrak{g}})$ be a 6-dimensional $2$-step nilpotent quadratic Lie algebra, where $\mathfrak{g}=\mathfrak{a} \oplus \mathfrak{b}$, $\mathfrak{a}=\operatorname{Span}_{\mathbb{F}}\{a_1,a_2,a_3\}$ and $\mathfrak{b}=\operatorname{Span}_{\mathbb{F}}\{b_1,b_2,b_3\}$. Let $\sigma$ be the permutation $\sigma=(1,2,3)$. The Lie bracket $[\,\cdot,\,\cdot]_{\mathfrak{g}}$ is given by $[a_i,a_{\sigma(i)}]_{\mathfrak{g}}=b_{\sigma^2(i)}$, for all $1 \le i \le 3$, and $[\mathfrak{g},\mathfrak{b}]_{\mathfrak{g}}=\{0\}$. The invariant metric $B_{\mathfrak{g}}$, is defined as $B_{\mathfrak{g}}(a_j,b_k)=\delta_{jk}$, $B_{\mathfrak{g}}(a_j,a_k)=B_{\mathfrak{g}}(b_j,b_k)=0$, for all $1 \le j,k \le 3$.
We define the $B_{\mathfrak{g}}$-skew-symmetric derivations $D_1,D_2,D_3\in\Der\mathfrak{g}$, by, $D_i(a_i)=D_i(b_i)=0$ for all $1 \le i \le 3$, $D_{i}(b_{\sigma(i)})=-D_{\sigma(i)}(b_i)=b_{\sigma^2(i)}$ and $D_i(a_{\sigma(i)})=-D_{\sigma(i)}(a_i)=a_{\sigma^2(i)}+b_{\sigma^2(i)}$ for all $1 \le i \le 3$.
The derivations $D_1,D_2,D_3$ are not inner. If one of them were inner, its image would lie in $\operatorname{Span}_{\mathbb{F}}\{b_1,b_2,b_3\}
=[\mathfrak{g},\mathfrak{g}]_{\mathfrak{g}}$, which is not the case.
\smallskip

\noindent 
Let $V=\operatorname{Span}_{\mathbb{F}}\{v_1,v_2,v_3 \}$ be a $3$-dimensional vector space. 
Let $\mathfrak{G}=\mathfrak{g} \oplus V$, and define 
$\theta:\mathfrak{g} \times \mathfrak{g} \rightarrow V$ by, $\theta(x,y)=\displaystyle{\sum_{i=1}^3B_{\mathfrak{g}}(D_i(x),y)v_i}$, for all $x,y \in \mathfrak{g}$.
This is a skew-symmetric bilinear map defined on $\mathfrak{g}$ and taking values on $V$ with which we define the following 
Lie bracket: $[\cdot,\cdot]_{\mathfrak{G}}:\mathfrak{G} \times \mathfrak{G} \rightarrow \mathfrak{G}$: $[x+u,y+v]_{\mathfrak{G}}=[x,y]_{\mathfrak{g}}+\theta(x,y)$, for all $x,y \in \mathfrak{g}$ and for all $u,v \in V$.
Thus, $(\mathfrak{G},[\cdot,\cdot]_{\mathfrak{G}})$ is a central extension of 
$(\mathfrak{g},[\cdot,\cdot]_{\mathfrak{g}})$:
$$
\begin{array}{ccccccccc}
  &             &   &                 &              &  \pi_{\mathfrak{g}} &    & \\
0 & \rightarrow & V & \hookrightarrow & \mathfrak{G} & \to & \mathfrak{g} & \to & 0
\end{array}
$$
\noindent 
The symmetric bilinear form $B_{\mathfrak{G}}:\mathfrak{G} \times \mathfrak{G} \rightarrow \mathbb{F}$, given by, $B_{\mathfrak{G}}(b_i,b_j)=B_{\mathfrak{G}}(a_i,v_j)=\delta_{ij}$, and
$B_{\mathfrak{G}}(b_i,a_j+v_k)=B_{\mathfrak{G}}(a_i,a_j)=B_{\mathfrak{G}}(v_i,v_j)=0$,
for all $1\le i,j,k\le 3$, is easily seen to be non-degenerate and invariant. 
Therefore, $(\mathfrak{G},[\cdot,\cdot]_{\mathfrak{G}},B_{\mathfrak{G}})$ is a 
$9$-dimensional, $3$-step nilpotent, quadratic Lie algebra. Clearly, $\mathfrak{G}$ has the vector space 
decomposition, $\mathfrak{G}=\mathfrak{a} \oplus \mathfrak{b} \oplus V$, where both, $\mathfrak{a}$ and $V$, are isotropic subspaces under $B_{\mathfrak{G}}$, and $\mathfrak{b}^{\perp}=\mathfrak{a}\oplus V$. %The linear maps $h:\mathfrak{G} \rightarrow \mathfrak{g}$ and $k:\mathfrak{g} \rightarrow \mathfrak{G}$ appearing in Lemma \ref{lema 1} are given by $h(v_i)=b_i$, $h(b_i)=a_i$, $h(a_i)=0$, for $i=1,2,3$, and $k(x)=T(x)+\overset{3}{\underset{i=1}{\sum}}B_{\mathfrak{g}}(a_i,x)v_i$, for all $x \in \mathfrak{g}$, respectively.
\noindent 
Write $\alpha_{ijk}$ for the structure constants of $(\mathfrak{g},[\cdot,\cdot]_{\mathfrak{g}})$; {\it ie\/,} $[a_j,a_k]_{\mathfrak{g}}=
\sum_{i=1}^3 \alpha_{ijk} b_i$, for $1 \leq j,k \leq 3$. Let us denote by $A_j$ the $3 \times 3$-matrix with entries in $\mathbb{F}$
whose $(i,k)$-entry is $\alpha_{ijk}$. That is, $A_1=\left(\begin{smallmatrix} 0 & 0 & 0\\
0 & 0 & -1\\
0 & 1 & 0 \end{smallmatrix} \right)$, $A_2=\left(\begin{smallmatrix} 0 & 0 & 1\\
0 & 0 & 0\\
-1 & 0 & 0 \end{smallmatrix} \right)$, $A_3=\left(\begin{smallmatrix} 0 & -1 & 0\\
1 & 0 & 0\\
0 & 0 & 0 \end{smallmatrix} \right)$.
It is easy to check that the $3 \times 3$ matrices $A_1,A_2,A_3$, verify $[A_i,A_{\sigma(i)}]_{\mathfrak{gl}(3)}=A_{\sigma^2(i)}$, for all $ 1 \le i \le 3$. Then, $\mathcal{A}=\operatorname{Span}_{\mathbb{F}}\{A_1,A_2,A_3\}$, is a $3$-dimensional Lie algebra. Furthermore, $B_{\mathcal{A}}:\mathcal{A} \times \mathcal{A} \to \mathbb{F}$, defined by, $B_{\mathcal{A}}(A_j,A_k)=\delta_{jk}$, yields an invariant metric on $(\mathcal{A},[\cdot,\cdot]_{\mathfrak{gl}(3)}|_{\mathcal{A} \times \mathcal{A}})$. It is thus a quadratic Lie algebra, which is furthermore isomorphic to $\mathfrak{sl}_2(\mathbb{F})$,
up to a constant factor in its Cartan-Killing metric.
\end{ejemplo}
\noindent
We would like to point out the relevance of this example.
We have exhibited a special case of one of the families constructed in the proof of {\bf Proposition \ref{proposicion 2-nilpotente}};
namely, central extensions $(\mathfrak{G},[\cdot,\cdot]_{\mathfrak{G}})$ of a $2$-step nilpotent quadratic Lie algebra $(\mathfrak{g},[\cdot,\cdot]_{\mathfrak{g}})$, by a 
$3$-dimensional vector space $V$, 
for which $0\to V\to\mathfrak{G}\to\mathfrak{g}\to 0$, does not split
---an actual consequence of the fact that the derivations $D_j$ of the $2$-cocycle
associated to the central extension, cannot be inner; that is, the $2$-cocycle is not a coboundary.
Moreover, no invariant metric $B_{\mathfrak{G}}$
can be defined on $(\mathfrak{G},[\cdot,\cdot]_{\mathfrak{G}})$ that can be identified to the orthogonal direct sum of invariant metrics on $(\mathfrak{g},[\cdot,\cdot]_{\mathfrak{g}})$ and $V$.

%%%%%%%%%%%%%%%%%%%%%%%%%%%%%%%%%%%%%%%%%%%%%%%%%%%%%%%%%%%%%%%%%%%%%%%%%%%%%%%%%%%%%
\section{Double central extensions}\label{seccion4}

\noindent 
In this section we give the details of the construction that we called in the introduction 
\textbf{double central extension} of a quadratic Lie algebra. This construction is inspired on the ideas
in \cite{Ben} that  led to the notion of {\it 
 double extensions for even quadratic Lie superalgebras\/.} 
We shall show that any central extension of a quadratic Lie algebra, 
which admits itself an invariant metric with isotropic kernel, is precisely such a double central extension.
\smallskip

\noindent Let $(\mathfrak{h},[\cdot,\cdot]_{\mathfrak{h}},B_{\mathfrak{h}})$ be a $p$-dimensional quadratic Lie algebra and let $\mathfrak{a}$ be an $r$-dimensional vector space. Our first goal is to set a quadratic Lie algebra structure on the vector space $\mathfrak{G}=\mathfrak{a} \oplus \mathfrak{h} \oplus \mathfrak{a}^{*}$ satisfying the following conditions:
\begin{itemize}

\item[(i)] $\mathfrak{h}$ is a  \emph{non-degenerate subspace} of $\mathfrak{G}$ and $\mathfrak{h}^{\perp}=\mathfrak{a} \oplus \mathfrak{a}^{*}$.

\item[(ii)]  The subspaces $\mathfrak{a}$ and $\mathfrak{a}^{*}$ form an hyperbolic pair for the quadratic form $B_{\mathfrak{G}}$.

\item[(iii)] $\mathfrak{a}^*$ is an \emph{isotropic} subspace contained in the center of $(\mathfrak{G},[\cdot,\cdot]_{\mathfrak{G}})$.

\end{itemize}

\noindent 
These conditions characterize the classical double extensions found in \cite{Kac} and \cite{Med},
except for the fact that we are requiring the special condition $\mathfrak{a}^{*} \subseteq C(\mathfrak{G})$
to be satisfied and $\mathfrak{a}^{*}$ is not necessarily a \emph{minimal} ideal of $(\mathfrak{G},[\cdot,\cdot]_{\mathfrak{G}})$.

\smallskip
\noindent 
We shall see that the Lie bracket on $\mathfrak{G}$ induces a Lie bracket $[\cdot,\cdot]_{\mathfrak{a} \oplus \mathfrak{h}}$ on the vector subspace $\mathfrak{a} \oplus \mathfrak{h}$, in such a way that $(\mathfrak{G},[\cdot,\cdot]_{\mathfrak{G}})$ is a central extension of $(\mathfrak{a} \oplus \mathfrak{h},[\cdot,\cdot]_{\mathfrak{a} \oplus \mathfrak{h}})$ by $\mathfrak{a}^{*}$. Then, our second goal is to determine necessary and sufficient conditions on $(\mathfrak{a} \oplus \mathfrak{h},[\cdot,\cdot]_{\mathfrak{a} \oplus \mathfrak{h}})$ to admit an invariant metric. This is a non-trivial problem, since we can give an example 
of a Lie algebra decomposed in the form $\mathfrak{a} \oplus \mathfrak{h} \oplus \mathfrak{a}^{*}$ satisfying the conditions (i), (ii) and (iii) above,
but $\mathfrak{a} \oplus \mathfrak{h}$ does not admit a quadratic Lie algebra structure (see {\bf Example \ref{Heisenberg}} and {\bf Theorem} \ref{teorema_c1} below).
This is the specific form in which we have addressed the question posed in the third parragraph of the introduction.

\smallskip
\noindent 
We now proceed to turn
$\mathfrak{G}=\mathfrak{a} \oplus \mathfrak{h} \oplus \mathfrak{a}^{*}$
into a quadratic Lie algebra satisfying the conditions above. 
Let $\phi:\mathfrak{a} \rightarrow \Der\mathfrak{h} \cap \frak{o}(B_{\h})$ be a linear map
and let $\Psi:\mathfrak{a} \times \mathfrak{a} \rightarrow \mathfrak{h}$ be a skew-symmetric bilinear map satisfying the following two conditions:
\begin{equation}\label{phi_ad}
[\phi(a),\phi(b)]_{\mathfrak{gl}(\mathfrak{h})}=\ad_{\mathfrak{h}}(\Psi(a,b)),\,\,\forall a,b \in \mathfrak{a},
\end{equation}
\begin{equation}\label{ciclico_phi}
\phi(a)(\Psi(b,c))+\phi(b)(\Psi(c,a))+\phi(c)(\Psi(a,b))=0,\,\,\forall a,b,c \in \mathfrak{a}.
\end{equation}
Let $\Phi:\mathfrak{h} \times \mathfrak{h} \rightarrow \mathfrak{a}^{*}$ be the skew-symmetric bilinear map given by,
\begin{equation}\label{phi}
\Phi(x,y)(a)=B_{\mathfrak{h}}(\phi(a)(x),y),\,\,\forall x,y \in \mathfrak{h},\,\,\forall a \in \mathfrak{a}.
\end{equation}
We shall also consider the bilinear map $\chi:\mathfrak{a} \times \mathfrak{h} \rightarrow \mathfrak{a}^{*}$ given by,
\begin{equation}\label{chi}
\chi(a,x)(b)=-B_{\mathfrak{h}}(\Psi(a,b),x),\,\,\forall a,b \in \mathfrak{a},\,\,\forall x \in \mathfrak{h}.
\end{equation}
Further require $\chi$ and $\Psi$ to satisfy the cyclic condition,
\begin{equation}\label{chi_Phi_sum1}
\chi(a,\Psi(b,c))+\chi(b,\Psi(c,a))+\chi(c,\Psi(a,b))=0,\,\,\forall a,b,c \in \mathfrak{a}.
\end{equation}
This obviously depends on $\Psi$, which in turn was defined in (16) through $\phi$. Thus, 
the pair $(\phi,\Psi)$ satisfying (16)-(20), together with a skew-symmetric bilinear map 
$\omega:\mathfrak{a} \times \mathfrak{a} \rightarrow \mathfrak{a}^{*}$ satisfying,
\begin{equation}\label{omega}
\omega(a,b)(c)=\omega(b,c)(a),\,\,\,\forall a,b,c \in \mathfrak{a},
\end{equation}
fits into what has been called in \cite{Ben} {\it a context for a generalized double extension\/.}
In fact, (16)-(21) are necessary and sufficient conditions to define a Lie bracket on the vector space 
$\mathfrak{G}=\mathfrak{a} \oplus \mathfrak{h} \oplus \mathfrak{a}^{*}$, through,
\begin{equation}\label{doble extension central}
\begin{array}{rcl}
\,[a,b]_{\mathfrak{G}}&=&\Psi(a,b)+\omega(a,b),\,\quad\forall a,b \in \mathfrak{a},\\
\,[a,x]_{\mathfrak{G}}&=&\phi(a)(x)+\chi(a,x),\,\,\quad\forall a,\forall x \in \mathfrak{h},\\
\,[x,y]_{\mathfrak{G}}&=&[x,y]_{\mathfrak{h}}+\Phi(x,y),\,\quad\forall x,y \in \mathfrak{h},\\
\,[\alpha,x]_{\mathfrak{G}}&=&0,\,\quad\forall \alpha \in \mathfrak{a}^{*},\,\forall x \in \mathfrak{G},
\end{array}
\end{equation}
together with an invariant metric $B_{\mathfrak{G}}$ on $(\mathfrak{G},[\cdot,\cdot]_{\mathfrak{G}})$
defined in terms of triples $(a,x,\alpha)$ and $(b,y,\beta)$ in $\mathfrak{a}\oplus\mathfrak{h}\oplus\mathfrak{a}^*$ by means of,
$$
B_{\mathfrak{G}}(a+x+\alpha,b+y+\beta)=B_{\mathfrak{h}}(x,y)+\beta(a)+\alpha(b).
$$
It is a straightforward matter to verify that the invariance condition
of the metric $B_{\mathfrak{G}}$ with respect to the Lie bracket
$[\,\cdot\,,\,\cdot\,]_{\mathfrak{G}}$ is satisfied, so that 
$(\mathfrak{G},[\cdot,\cdot]_{\mathfrak{G}},B_{\mathfrak{G}})$ is a quadratic Lie algebra.
Following \cite{Ben} we shall refer to an extension of this type as a 
\textbf{double central extension of $(\mathfrak{h},[\cdot,\cdot]_{\mathfrak{h}},B_{\mathfrak{h}})$ by $\mathfrak{a}$}.

\noindent Observe that if $\mathfrak{h}=\{0\}$, then $\mathfrak{G}=\mathfrak{a} \oplus \mathfrak{a}^{*}$ and, in this case, $(\mathfrak{G},[\cdot,\cdot]_{\mathfrak{G}},B_{\mathfrak{G}})$ is the cotangente bundle
extension defined by $\omega$ (see \cite{Bordemann}, {\bf \S3}).

\smallskip
\noindent 
The Jacobi identity for the Lie bracket $[\,\cdot\,,\,\cdot\,]_{\mathfrak{G}}$ defined in
\eqref{doble extension central}, guarantee that the following bracket defined on the vector subspace
$\mathfrak{g}=\mathfrak{a} \oplus \mathfrak{h}\subseteq \mathfrak{G}$ satisfies
the Jacobi identity itself:
\begin{eqnarray}
\,[a,b]_{\mathfrak{g}}&=&\Psi(a,b),\,\forall a,b \in \mathfrak{a}\label{new3},\\
\,[a,x]_{\mathfrak{g}}&=&\phi(a)(x),\,\forall a \in \mathfrak{a},\,x \in \mathfrak{h}\label{new2},\\
\,[x,y]_{\mathfrak{g}}&=&[x,y]_{\mathfrak{h}},\,\forall x,y \in \mathfrak{h}\label{new1}.
\end{eqnarray}
Thus, $(\mathfrak{g},[\cdot,\cdot]_{\mathfrak{g}})$ is
a Lie algebra having its first derived ideal inside $\mathfrak{h}$.
Now, if $\iota:\mathfrak{a}^{*} \rightarrow \mathfrak{G}$ is the inclusion map and 
$\pi_{\mathfrak{g}}:\mathfrak{G} \rightarrow \mathfrak{g}$ the projection map, 
we obtain the following short exact sequence of Lie algebras:
$$
\begin{array}{rcccccccl}
\,& \, & \, & \iota & \,& \pi_{\mathfrak{g}} &\,&\,& \\
0 & \rightarrow & \mathfrak{a}^{*} & \rightarrow & \mathfrak{G} & \rightarrow & \mathfrak{g} & \rightarrow & 0.
\end{array}
$$
Therefore, $(\mathfrak{G},[\cdot,\cdot]_{\mathfrak{G}})$ is a central extension of $(\mathfrak{g},[\cdot,\cdot]_{\mathfrak{g}})$ by $\mathfrak{a}^{*}$. We observe that $(\mathfrak{G},[\cdot,\cdot]_{\mathfrak{G}})$ admits an invariant metric but we do not know if $(\mathfrak{g},[\cdot,\cdot]_{\mathfrak{g}})$ does.
The next Proposition provides necessary and sufficient conditions for $(\mathfrak{g},[\cdot,\cdot]_{\mathfrak{g}})$ 
to admit an invariant metric ---a question not addressed in previous works.
\begin{proposicion}\label{prop_c1}
Let $(\mathfrak{G},[\cdot,\cdot]_{\mathfrak{G}},B_{\mathfrak{G}})$ be a double central extension of $(\mathfrak{h},[\cdot,\cdot]_{\mathfrak{h}},B_{\mathfrak{h}})$ by $\mathfrak{a}$ and $(\mathfrak{g}=\mathfrak{a} \oplus \mathfrak{h},[\cdot,\cdot]_{\mathfrak{g}})$ be the Lie algebra defined by \eqref{new3}, \eqref{new2} and \eqref{new1}. Then $(\mathfrak{g},[\cdot,\cdot]_{\mathfrak{g}})$ admits an invariant metric if and only if there exists a linear map $L:\mathfrak{G} \rightarrow \mathfrak{G}$ such that:
\begin{itemize}

\item[(i)] $\Ker L=\mathfrak{a}^{*}$.

\item[(ii)] $L|_{\mathfrak{g}}$ is injective.

\item[(iii)] $L \in \Gamma_{B_{\mathfrak{G}}}(\mathfrak{G})$.

\end{itemize}

\end{proposicion}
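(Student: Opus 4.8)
The plan is to prove the two implications separately. The forward implication will follow almost immediately from the structural machinery already in place, whereas the converse requires building an invariant metric on $\g$ out of the operator $L$ and verifying its three defining properties. For the \emph{only if} part, suppose $(\g,[\cdot,\cdot]_{\g})$ carries an invariant metric $B_{\g}$. Since $(\G,[\cdot,\cdot]_{\G})$ is by construction a central extension of the (now) quadratic Lie algebra $(\g,[\cdot,\cdot]_{\g})$ by $V=\a^\ast$, and since $\G$ carries the invariant metric $B_{\G}$, the hypotheses of {\bf Lemma \ref{lema 1}}, {\bf Lemma \ref{lema 2}} and {\bf Proposition \ref{mrd1}} are all met. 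I would simply take $L:\G\to\G$ to be the map produced in {\bf Proposition \ref{mrd1}}, characterized by $L(x+v)=k(x)$ for $x\in\g$, $v\in V$. Its properties $\Ker L=V=\a^\ast$ and $L\in\Gamma_{B_{\G}}(\G)$ are precisely (i) and (iii), and since $L|_{\g}=k$ with $k$ injective ({\bf Lemma \ref{lema 1}.(i)}), property (ii) holds as well.

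For the \emph{if} part, given $L$ satisfying (i)--(iii), I would define on $\g=\a\oplus\h\subseteq\G$ the bilinear form $B_{\g}(x,y):=B_{\G}(L(x),y)$ and check it is an invariant metric. Symmetry is immediate from $L$ being $B_{\G}$-symmetric, since $B_{\G}(L(x),y)=B_{\G}(x,L(y))=B_{\G}(L(y),x)$. For non-degeneracy, observe that $L$ being $B_{\G}$-symmetric gives $(\Im L)^{\perp}=\Ker L=\a^\ast$, and that $L(\g)=\Im L$ because $\G=\g\oplus\a^\ast$ with $\a^\ast=\Ker L$. Writing $B_{\g}(x,y)=B_{\G}(x,L(y))$, the radical of $B_{\g}$ is then $\{x\in\g: x\perp L(\g)\}=\g\cap(\Im L)^{\perp}=\g\cap\a^\ast=\{0\}$, using the direct sum $\G=\a\oplus\h\oplus\a^\ast$. (Property (ii) is in fact automatic here, since $\Ker L|_{\g}=\a^\ast\cap\g=\{0\}$; I would keep it as part of the stated data.)

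The crux, and the step I expect to demand the most care, is invariance. Writing $[x,y]_{\G}=[x,y]_{\g}+\zeta(x,y)$, where $\zeta(x,y)\in\a^\ast$ collects the $\a^\ast$-valued components $\omega,\chi,\Phi$ of \eqref{doble extension central}, the key observation is that $\zeta$ takes values in $\a^\ast=\Ker L$. Hence $L([x,y]_{\g})=L([x,y]_{\G})=[L(x),y]_{\G}$, the last equality because $L$ is a centroid. Invariance of $B_{\G}$ then yields $B_{\g}([x,y]_{\g},z)=B_{\G}([L(x),y]_{\G},z)=B_{\G}(L(x),[y,z]_{\G})$, and upon splitting $[y,z]_{\G}=[y,z]_{\g}+\zeta(y,z)$ the cocycle contribution drops out, since $B_{\G}(L(x),\zeta(y,z))=B_{\G}(x,L(\zeta(y,z)))=0$. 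This gives $B_{\g}([x,y]_{\g},z)=B_{\G}(L(x),[y,z]_{\g})=B_{\g}(x,[y,z]_{\g})$, as required. The only genuine bookkeeping lies in tracking which of the structure maps of \eqref{doble extension central} contribute to the $\g$-part and which to the $\a^\ast$-part, so that the identifications $[x,y]_{\g}=\pi_{\g}([x,y]_{\G})$ and $\zeta(x,y)\in\a^\ast=\Ker L$ are made correctly; once that is clear, every computation reduces to the self-adjointness and centroid properties of $L$ together with the orthogonality $\Im L\perp\Ker L$.
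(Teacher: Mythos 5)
Your proposal is correct and follows essentially the same route as the paper: in the forward direction you take the extension $L(x+\alpha)=k(x)$ of the map $k$ from {\bf Lemma \ref{lema 1}}, exactly as the paper does via {\bf Proposition \ref{mrd1}}, and in the converse you define $B_{\mathfrak{g}}(x,y)=B_{\mathfrak{G}}(L(x),y)$, which is the paper's definition with $k=L|_{\mathfrak{g}}$. The only difference is that you spell out the symmetry, non-degeneracy and invariance checks (using $(\Im L)^{\perp}=\Ker L=\mathfrak{a}^{*}$ and the fact that the $\mathfrak{a}^{*}$-valued cocycle part of $[\cdot,\cdot]_{\mathfrak{G}}$ is killed by $L$), which the paper leaves as ``straightforward''.
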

\begin{proof}

\noindent 
$(\Rightarrow)$ Suppose that $(\mathfrak{g},[\cdot,\cdot]_{\mathfrak{g}})$ admits 
an invariant metric $B_{\mathfrak{g}}$. Let $\iota: \mathfrak{g} \rightarrow
\mathfrak{G}$ be the inclusion map and let $\pi_{\mathfrak{g}}:\mathfrak{G} \rightarrow \mathfrak{g}$ be
the projection onto $\mathfrak{g}$. By {\bf Lemma \ref{lema 1}}, there are linear maps $h:\mathfrak{G} \rightarrow \mathfrak{g}$ and $k:\mathfrak{g} \rightarrow \mathfrak{G}$ ---associated to $\iota$ and $\pi_{\mathfrak{g}}$--- such that, $h \circ k=\operatorname{Id}_{\mathfrak{g}}$, so that $h$ is surjective and $k$ injective. Also, by {\bf Lemma \ref{lema 1}.(v)}, $[k(x),y]_{\mathfrak{G}}=k([x,y]_{\mathfrak{g}})$ for all $x,y \in \mathfrak{g}$. 

\smallskip
\noindent 
As in {\bf Proposition \ref{mrd1}}, we extend the linear map $k:\mathfrak{g} \rightarrow \mathfrak{G}$ to the whole vector space $\mathfrak{G}$ by means of $L(x+\alpha)=k(x)$ for all $x \in \mathfrak{g}$, and for all $\alpha \in \mathfrak{a}^{*}$. Then $\mathfrak{a}^{*} \subseteq \Ker L$.
Since $L|_{\mathfrak{g}}=k$, and $k$ is injective, it follows that $\Ker L=\mathfrak{a}^{*}$. It remains to show that $L \in \Gamma_{B_{\mathfrak{G}}}(\mathfrak{G})$ but this is proved exactly as in the proof of {\bf Proposition \ref{mrd1}}.

\smallskip
\noindent 
$(\Leftarrow)$ Suppose there is a linear map $L:\mathfrak{G}\to\mathfrak{G}$ satisfying the conditions of the statement. 
Let $k=L|_{\mathfrak{g}}$, and define $B_{\mathfrak{g}}:\mathfrak{g} \times \mathfrak{g} \rightarrow \mathbb{F}$ by $B_{\mathfrak{g}}(x,y)=B_{\mathfrak{G}}(k(x),y)$ for all $x,y \in \mathfrak{g}$. Using (i), (ii) and (iii), it is straightforward to see that $B_{\mathfrak{g}}$ is an invariant metric in $(\mathfrak{g},[\cdot,\cdot]_{\mathfrak{g}})$.
\end{proof}
\noindent 
Now, the double central extension of $(\mathfrak{h},[\cdot,\cdot]_{\mathfrak{h}},B_{\mathfrak{h}})$ by $\mathfrak{a}$ is, by definition, the central extension of $(\mathfrak{a} \oplus \mathfrak{h},[\cdot,\cdot]_{\mathfrak{a} \oplus \mathfrak{h}})$ by $\mathfrak{a}^{*}$, where the Lie bracket $[\cdot,\cdot]_{\mathfrak{a} \oplus \mathfrak{h}}$ is defined by the expressions \eqref{new3}, \eqref{new2} and \eqref{new1} in terms of the data $(\phi,\Psi)$. If at the same time, the hypotheses of {\bf Proposition \ref{prop_c1}} are satisfied, then $(\mathfrak{a} \oplus \mathfrak{h},[\cdot,\cdot]_{\mathfrak{a} \oplus \mathfrak{h}})$ admits an invariant metric. These hypotheses, however, are far from being trivially satisfied as the following example shows how $(\mathfrak{a} \oplus \mathfrak{h},[\cdot,\cdot]_{\mathfrak{a} \oplus \mathfrak{h}})$ might not have any invariant metric defined on it.

\smallskip
\noindent
\begin{ejemplo}\label{Heisenberg}
Let $(\mathfrak{h}_m=V_m \oplus \mathbb{F}\hslash,[\cdot,\cdot]_{\mathfrak{h}_m})$ be the $(2m+1)$-dimensional Heisenberg Lie algebra, with skew-symmetric, non-degenerate, bilinear form $\omega:V_m \times V_m \rightarrow \mathbb{F}$, defined by $[x,y]_{\mathfrak{h}_m}=\omega(x,y)\hslash$ for all $x,y \in V_m$, and $C(\mathfrak{h}_m)=\mathbb{F}\hslash$. It is known that every invariant bilinear form defined on $(\mathfrak{h}_m,[\cdot,\cdot]_{\mathfrak{h}_m})$ degenerates in $\hslash$, which means that $(\mathfrak{h}_m,[\cdot,\cdot]_{\mathfrak{h}_m})$ does not admit invariant metrics. However, the Heisenberg Lie algebra $(\mathfrak{h}_m,[\cdot,\cdot]_{\mathfrak{h}_m})$ can be extended to a Lie algebra that does admit an invariant metric. Such an extension is defined in terms of  a derivation $D \in \Der\mathfrak{h}_m$ satisfying the following properties (see \cite{Mac}):
$\Ker D=\mathbb{F}\hslash$, $D(V_m) \subset V_m$ and $\omega(D(x),y)=-\omega(x,D(y))$ for all $x,y \in V_m$. Now, let us consider the semi-direct product of $(\mathfrak{h}_m,[\cdot,\cdot]_{\mathfrak{h}_m})$ and $D$ and denote it by $\mathfrak{h}_m[D]$. Then $\mathfrak{h}_m[D]$ is a double central extension of the Abelian Lie algebra $V_m$ by $D$ (see \cite{Mac}).

\noindent Now consider the Lie bracket $[\cdot,\cdot]_{\mathfrak{g}}$ defined on $\mathfrak{g}=\mathbb{F}D \oplus V_m$ by,
$$
[x+\lambda D,y+\mu D]_{\mathfrak{g}}=\lambda D(y)-\mu D(x),\,\,\forall x,y \in V_m,\,\,\forall \lambda,\mu \in \mathbb{F}.
$$
Then $\mathfrak{h}_m[D]=\mathfrak{g} \oplus \mathbb{F}\hslash$, and make
it the central extension of $(\mathfrak{g},[\cdot,\cdot]_{\mathfrak{g}})$ by $\mathbb{F}\hslash$,
$$
0 \rightarrow \mathbb{F}\hslash \rightarrow\mathfrak{h}_m[D] \rightarrow \mathfrak{g} \rightarrow 0,
$$
associated to the $2-$cocycle $\theta:\mathfrak{g} \times \mathfrak{g} \rightarrow \mathbb{F}\hslash$ 
given by $\theta(x+\lambda D,y+\mu D)=\omega(x,y) \hslash$, for all $x,y \in V_m$ and for all $\lambda,\mu \in \mathbb{F}$. 
Note that if $(\mathfrak{g},[\cdot,\cdot]_{\mathfrak{g}})$ admits an invariant metric $B_{\mathfrak{g}}$, then $B_{\mathfrak{g}}(x,D(y))=B_{\g}(x,[D,y]_{\g})=-B_{\g}([x,y]_{\g},D)=0$, and $B_{\g}(x,D)=B_{\g}(D(D^{-1})(x),D)=B_{\g}([D,D^{-1}(x)]_{\g},D)=-B_{\g}(D^{-1}(x),[D,D]_{\g})=0$, for all $x,y \in V_m$, which implies that $B_{\mathfrak{g}}$  degenerates in $V_m$. This shows that there are double central extensions which cannot be central extensions of a quadratic Lie algebra.
\end{ejemplo}

\noindent
We can now state the following:
\begin{teorema}\label{teorema_c1}
Let $(\mathfrak{g},[\cdot,\cdot]_{\mathfrak{g}},B_{\mathfrak{g}})$ be a $n$-dimensional quadratic Lie algebra, $V$ be an $r$-dimensional vector space and $(\mathfrak{G},[\cdot,\cdot]_{\mathfrak{G}})$ be a central extension of $(\mathfrak{g},[\cdot,\cdot]_{\mathfrak{g}})$ by $V$. Suppose there exists an invariant metric $B_{\mathfrak{G}}$ on $(\mathfrak{G},[\cdot,\cdot]_{\mathfrak{G}})$.
\begin{itemize}

\item[(i)] If $V$ is non degenerate, there exists an invariant metric $\bar{B}_{\mathfrak{g}}$ on $(\mathfrak{g},[\cdot,\cdot]_{\mathfrak{g}})$ such that $(\mathfrak{G},[\cdot,\cdot]_{\mathfrak{G}},B_{\mathfrak{G}})$ is isometric to $(\mathfrak{g},[\cdot,\cdot]_{\mathfrak{g}},\bar{B}_{\mathfrak{g}}) \overset{\perp}{\oplus} (V,B_{\mathfrak{G}}|_{V \times V})$.

\item[(ii)] If $V$ is isotropic then 

\begin{itemize}

\item[$\bullet$] $r \leq \dim_{\mathbb{F}}C(\mathfrak{g})$,

\item[$\bullet$] there is an $(n-r)$-dimensional quadratic Lie algebra $(\mathfrak{h},[\cdot,\cdot]_{\mathfrak{h}},B_{\mathfrak{h}})$ 
and an $r$-dimensional vector space $\mathfrak{a}$, such that $\mathfrak{G}=\mathfrak{a} \oplus \mathfrak{h} \oplus V$ and $(\mathfrak{G},[\cdot,\cdot]_{\mathfrak{G}},B_{\mathfrak{G}})$ is a double central extension of $(\mathfrak{h},[\cdot,\cdot]_{\mathfrak{h}},B_{\mathfrak{h}})$ by $\mathfrak{a}$, where $\mathfrak{a}^{*}$ and $V$ are isomorphic vector spaces.  

\item[$\bullet$] There exists a quadratic Lie algebra structure on the vector space $\mathfrak{a} \oplus \mathfrak{h}$ such that $(\mathfrak{g},[\cdot,\cdot]_{\mathfrak{g}},B_{\mathfrak{g}})=(\mathfrak{a} \oplus \mathfrak{h},[\cdot,\cdot]_{\mathfrak{a} \oplus \mathfrak{h}},B_{\mathfrak{a} \oplus \mathfrak{h}})$ and $[\mathfrak{g},\mathfrak{g}]_{\mathfrak{g}} \subseteq \mathfrak{h}$.

\end{itemize}

\end{itemize}

\end{teorema}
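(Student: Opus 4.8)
The plan is to dispatch (i) by citation and then assemble (ii) from the isotropic machinery of \textbf{Lemmas \ref{lema 31}--\ref{q1}} and the double central extension formalism of \textbf{\S\ref{seccion4}}. Part (i) is nothing more than \textbf{Proposition \ref{teorema de estructura 2}.(i)}: non-degeneracy of $V$ is equivalent to $V\cap V^{\perp}=\{0\}$, which forces $T$ to be invertible, and then $\bar{B}_{\mathfrak{g}}(x,y)=B_{\mathfrak{g}}(T^{-1}(x),y)$ is the invariant metric realizing the claimed orthogonal splitting.

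For (ii) I would first record the dimension bound: $h|_V:V\to\Ker T$ is bijective by \textbf{Lemma \ref{lema 31}.(ii)}, so $r=\dim_{\mathbb{F}}\Ker T$, and $\Ker T\subseteq C(\mathfrak{g})$ by \textbf{Lemma \ref{lema 2}.(ii)} gives $r\leq\dim_{\mathbb{F}}C(\mathfrak{g})$. Next, using \textbf{Lemma \ref{a isotropico}} and \textbf{Remark \ref{remark 5}} I replace $B_{\mathfrak{G}}$ by the isometric metric for which $\mathfrak{a}=\operatorname{Span}_{\mathbb{F}}\{a_1,\dots,a_r\}$ is isotropic and $\mathfrak{a}=\Ker h$ (\textbf{Lemma \ref{q1}}); this change is harmless for the structural conclusion. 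I then set $\mathfrak{h}=\Im T$ viewed inside $\mathfrak{g}\subseteq\mathfrak{G}$ and claim $\mathfrak{G}=\mathfrak{a}\oplus\mathfrak{h}\oplus V$ is the required Witt decomposition. The vector space identity follows from $\mathfrak{g}=\Im T\oplus\mathfrak{a}$ (\textbf{Lemma \ref{lema 31}.(iv)}), which is metric-independent; the summand $\mathfrak{a}=\Ker h$ is isotropic and pairs non-degenerately with $V$ through the isomorphism $\varphi$ of \textbf{Lemma \ref{lema 2}.(i)}, so $\mathfrak{a}\oplus V$ is a hyperbolic summand; and $\mathfrak{h}$ is non-degenerate with $\mathfrak{h}^{\perp}=\mathfrak{a}\oplus V$ because $B_{\mathfrak{G}}(\mathfrak{a},\mathfrak{g})=\{0\}$ (as $\mathfrak{a}=\Ker h=\mathfrak{g}^{\perp}$, \textbf{Lemma \ref{lema 1}.(iii)}) and $B_{\mathfrak{G}}(\Im T,V)=\{0\}$ (\textbf{Lemma \ref{lema 31}.(iii)}). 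The identification $\mathfrak{a}^{*}\cong V$ is the duality induced by this hyperbolic pairing, giving dimensions $n-r$ for $\mathfrak{h}$ and $r$ for $\mathfrak{a}$.

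The crux is to recognize $[\,\cdot\,,\,\cdot\,]_{\mathfrak{G}}$ as the double central extension bracket \eqref{doble extension central}. The decisive input is $[\mathfrak{g},\mathfrak{g}]_{\mathfrak{g}}\subseteq\Im T=\mathfrak{h}$, the case $\ell=1$ of \textbf{Lemma \ref{lema 2}.(viii)}: it makes $\mathfrak{h}$ a subalgebra of $\mathfrak{g}$ and forces the $\mathfrak{a}$-components of $[x,y]_{\mathfrak{g}}$, $[a,x]_{\mathfrak{g}}$ and $[a,b]_{\mathfrak{g}}$ to vanish for $x,y\in\mathfrak{h}$, $a,b\in\mathfrak{a}$. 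I would therefore define $\phi(a)=\ad_{\mathfrak{g}}(a)|_{\mathfrak{h}}$ and $\Psi(a,b)=[a,b]_{\mathfrak{g}}$, read $\Phi,\chi,\omega$ off the $V$-components of the respective brackets, and set $B_{\mathfrak{h}}=B_{\mathfrak{G}}|_{\mathfrak{h}\times\mathfrak{h}}$. That $\phi(a)\in\Der\mathfrak{h}\cap\frak{o}(B_{\mathfrak{h}})$, that $B_{\mathfrak{h}}$ is invariant, and that the relations \eqref{phi_ad}--\eqref{omega} hold are then routine expansions of the Jacobi identity in $\mathfrak{G}$ together with the invariance of $B_{\mathfrak{G}}$; for example the computation $\Phi(x,y)(a)=B_{\mathfrak{G}}([x,y]_{\mathfrak{G}},a)=B_{\mathfrak{G}}(x,[y,a]_{\mathfrak{G}})=B_{\mathfrak{h}}(\phi(a)x,y)$ recovers \eqref{phi}. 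I expect this matching to be the main obstacle, not because any individual step is deep, but because each of the structure relations must be paired with the correct Jacobi triple and the correct orthogonal component, and the bookkeeping is where errors would creep in.

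Finally, for the third bullet, the Lie algebra $(\mathfrak{a}\oplus\mathfrak{h},[\,\cdot\,,\,\cdot\,]_{\mathfrak{a}\oplus\mathfrak{h}})$ attached to this double central extension through \eqref{new3}--\eqref{new1} has bracket given precisely by the $\mathfrak{h}$-valued components computed above, which under $\mathfrak{g}=\mathfrak{a}\oplus\mathfrak{h}$ coincide with $[\,\cdot\,,\,\cdot\,]_{\mathfrak{g}}$; hence it is exactly $(\mathfrak{g},[\,\cdot\,,\,\cdot\,]_{\mathfrak{g}})$, and $[\mathfrak{g},\mathfrak{g}]_{\mathfrak{g}}\subseteq\mathfrak{h}$ is the inclusion already used throughout. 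Since $\mathfrak{g}$ is quadratic by hypothesis, the original $B_{\mathfrak{g}}$ serves as the invariant metric $B_{\mathfrak{a}\oplus\mathfrak{h}}$, so the base quotient genuinely carries an invariant metric; the abstract witness for this is the map $L$ of \textbf{Proposition \ref{mrd1}}, which verifies the criterion of \textbf{Proposition \ref{prop_c1}}. This completes the identification of $(\mathfrak{G},B_{\mathfrak{G}})$ as a double central extension whose base quotient is the prescribed quadratic Lie algebra.
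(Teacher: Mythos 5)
Your proposal is correct and takes essentially the same route as the paper: part (i) is delegated to Proposition \ref{teorema de estructura 2}, and part (ii) reproduces the paper's Lemma \ref{isotropico} --- the metric replacement of Lemma \ref{a isotropico}/Remark \ref{remark 5}, the decomposition $\mathfrak{G}=\mathfrak{a}\oplus\Im T\oplus V$ with $\mathfrak{h}=\Im T$, the same definitions of $\phi$, $\Psi$, $\Phi$, $\chi$, $\omega$ (the paper writes the last three via the derivations $D_i$, which is exactly your reading of the $V$-components), and the closing appeal to Proposition \ref{prop_c1} through the extension $L$ of $k$. The only cosmetic difference is that you verify \eqref{phi} directly from the invariance of $B_{\mathfrak{G}}$, whereas the paper routes the same computation through $h$ and the explicit $D_i$ formula; the content is identical.
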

\begin{proof}
 {\bf Proposition \ref{teorema de estructura 2}} proves the case when $V$ is non-degenerate. It only remains to prove the following statement that deals with the case when $V$ is an isotropic ideal.
\end{proof}

\medskip
\noindent
\begin{lema}\label{isotropico}
Let the hypotheses be as in {\bf Theorem 1} above. 
If $V$ is an isotropic ideal in $\mathfrak{G}$, then $(\mathfrak{G},[\cdot,\cdot]_{\mathfrak{G}},B_{\mathfrak{G}})$ is isometric to a double central extension of an $(n-r)$-dimensional quadratic Lie algebra.
\end{lema}
\begin{proof}
\noindent 
Let $T\in\End_{\mathbb{F}}{\mathfrak{G}}$ be as in the hypothesis of {\bf Lemma \ref{lema 2}}. By {\bf Lemma \ref{lema 31}.(iv)},
we have the following vector space decomposition: $\mathfrak{G}=\mathfrak{a} \oplus \Im T \oplus V$, where $\mathfrak{g}=\mathfrak{a} \oplus \Im T$ and $\mathfrak{a}$ is an isotropic subspace of $\mathfrak{G}$ (see {\bf Lemma \ref{a isotropico}}). 

\smallskip
\noindent 
We need to find maps $\phi$, $\Psi$, $\chi$ and $\omega$ satisfying \eqref{phi_ad}-\eqref{omega}. 
Now, write $\mathfrak{h}=\Im T$. 
From {\bf Lemma \ref{lema 31}.(iii)}, it is not difficult to conclude that
$\mathfrak{h}$ is a non-degenerate vector subspace of $\mathfrak{G}$, and that $\mathfrak{h}^{\perp}=\mathfrak{a} \oplus V$. 
Since $\Im T$ is an ideal in $\mathfrak{g}$, we may
define a Lie bracket $[\cdot,\cdot]_{\mathfrak{h}}$ on $\mathfrak{h}$ 
by letting $[x,y]_{\mathfrak{h}}=[x,y]_{\mathfrak{g}}$ for all $x,y \in \mathfrak{h}$. 
Let $B_{\mathfrak{h}}$ be the restriction $B_{\mathfrak{G}}|_{\mathfrak{h} \times \mathfrak{h}}$.
From {\bf Lemma \ref{lema 31}.(iii)} one can easily show that $B_{\mathfrak{h}}$ is invariant. Indeed,
for any $x^\prime=T(x)$, $y^\prime=T(y)$ and $z^\prime=T(z)\in \mathfrak{h}$, we have,
$$
\aligned
B_{\mathfrak{h}}([T(x),T(y)]_{\mathfrak{h}}, T(z)) & =
B_{\mathfrak{G}}([T(x),T(y)]_{\mathfrak{g}}, T(z)) = B_{\mathfrak{g}}({[T(x),T(y)]_{\mathfrak{g}}},h\circ T(z))\\
& = B_{\mathfrak{g}}(T{[x,T(y)]_{\mathfrak{g}}},h\circ T(z)) = B_{\mathfrak{g}}({[x,T(y)]_{\mathfrak{g}}},T\circ h\circ T(z))\\
& = B_{\mathfrak{g}}({[x,T(y)]_{\mathfrak{g}}},T(z)) = B_{\mathfrak{g}}(x,[T(y),T(z)]_{\mathfrak{g}})\\
& = B_{\mathfrak{G}}(k(x),[T(y),T(z)]_{\mathfrak{g}})\\
& = B_{\mathfrak{G}}(T(x)+\sum_{i=1}^rB_{\mathfrak{g}}(a_i,x)v_i,[T(y),T(z)]_{\mathfrak{g}})\\
& = B_{\mathfrak{h}}(T(x),[T(y),T(z)]_{\mathfrak{h}})
\endaligned
$$
where in the last step we used the fact that $V$ is isotropic, as follows:
$$
\aligned
B_{\mathfrak{G}}(v,[T(y),T(z)]_{\mathfrak{G}})&=
B_{\mathfrak{G}}(v,[T(y),T(z)]_{\mathfrak{g}}+\sum_{i=1}^rB_{\mathfrak{g}}(D_i(T(y)),T(z)])v_i)\\
&=
B_{\mathfrak{G}}(v,[T(y),T(z)]_{\mathfrak{G}})=0.
\endaligned
$$
Thus,
$(\mathfrak{h},[\cdot,\cdot]_{\mathfrak{h}},B_{\mathfrak{h}})$ is an $n-r$-dimensional quadratic Lie algebra.

\smallskip
\noindent 
We now define a linear map $\phi:\mathfrak{a} \rightarrow \Der\mathfrak{h} \cap \frak{o}(B_{\h})$ 
by $\phi(a)(x)=[a,x]_{\mathfrak{g}}$ for all $a \in \mathfrak{a}$ and for all $x \in \mathfrak{h}$. 
It follows from the invariance of $B_{\mathfrak{h}}$, that $\phi(a)$ is
$B_{\mathfrak{h}}$-skew-symmetric.
Since $\mathfrak{h}$ is an ideal of $(\mathfrak{g},[\cdot,\cdot]_{\mathfrak{g}})$, the linear map $\phi$ is well defined.
We now define a skew-symmetric bilinear map $\Psi:\mathfrak{a} \times \mathfrak{a} \rightarrow \mathfrak{h}$ by $\Psi(a,b)=[a,b]_{\mathfrak{g}}$ for all $a,b \in \mathfrak{a}$. Note that by {\bf Lemma \ref{lema 31}.(iv)}, the linear map $\Psi$ is well defined. In addition, the Jacobi identity in $\mathfrak{g}$, implies that $[\phi(a),\phi(b)]_{\mathfrak{gl}(\mathfrak{h})}=\ad_{\mathfrak{h}}(\Psi(a,b))$ and $\phi(a) \Psi(b,c)+\phi(b) \Psi(c,a)+\phi(c) \Psi(a,b)=0$, for all $a,b,c \in \mathfrak{a}$, thus satisfying \eqref{phi_ad} and \eqref{ciclico_phi}.

\smallskip
\noindent 
In order to identify $V$ with $\mathfrak{a}^*$ define the correspondence
$V\ni v\mapsto \xi(v)=B_{\mathfrak{G}}(\cdot,v)\vert_{\mathfrak{a}}\in \mathfrak{a}^*$.
Since $B_{\mathfrak{G}}$ is non-degenerate, $\xi$ defines a bijective linear map.
Besides, {\bf Lemma \ref{lema 2}.(i)} says that  $B_{\mathfrak{G}}(a_i,v_j)=\delta_{ij}$ for $1\le i,j\le r$,
which makes the identification $\xi(v_i)\leftrightarrow a^{*}_i$, for $1 \leq i \leq r$. 

\smallskip
\noindent 
Now use the maps $D_i\in\operatorname{Der}\mathfrak{g}$
defined by the $2$-cocycle of the extension of $\mathfrak{g}$ by $V$
to define the skew-symmetric bilinear map $\Phi:\mathfrak{h} \times \mathfrak{h} \rightarrow \mathfrak{a}^{*}$, 
by $\Phi(x,y)=\overset{r}{\underset{i=1}{\sum}}B_{\mathfrak{g}}(D_i(x),y)a^{*}_i$, for all $x,y \in \mathfrak{h}$.
We want to prove that equation \eqref{phi} is satisfied. Indeed,
$$
\aligned
\Phi(x,y)(a_j)&=\sum_{i=1}^rB_{\mathfrak{g}}(D_i(x),y)a^{*}_i(a_j)=B_{\mathfrak{g}}(D_j(x),y)=B_{\mathfrak{g}}( h([a_j,x]_{\mathfrak{G}}),y)\\
&=B_{\mathfrak{G}}( [a_j,x]_{\mathfrak{G}},y)=B_{\mathfrak{G}}( [a_j,x]_{\mathfrak{g}}+\sum_{i=i}^rB_{\mathfrak{g}}(D_i(a_j),x)v_i,y)\\
&=B_{\mathfrak{G}}( [a_j,x]_{\mathfrak{g}},y)=B_{\mathfrak{G}}( \phi(a_j)(x),y).
\endaligned
$$
Since both $\phi(a_j)(x)$ and $y$ belong to $\mathfrak{h}=\operatorname{Im}T$, it follows
from the definition of $B_{\mathfrak{h}}$ that $\Phi(x,y)(a_j)=B_{\mathfrak{h}}(\phi(a_j)(x),y)$, for all $x,y \in \mathfrak{h}$.
\smallskip

\noindent 
Now let $\chi:\mathfrak{a} \times \mathfrak{h} \rightarrow \mathfrak{a}^{*}$ be defined by 
$\chi(a,x)=\overset{r}{\underset{i=1}\sum}B_{\mathfrak{g}}(D_i(a),x)a_i^{*}$, for all $a \in \mathfrak{a}$, and all $x \in \mathfrak{h}$.
After a similar computation we may conclude that $\chi(a,x)(a_j)=-B_{\mathfrak{h}}(\Psi(a,a_j),x)$ for all $a \in \mathfrak{a}$, $x \in \mathfrak{h}$ and $1 \leq j \leq r$. Thus \eqref{chi} is also satisfied.

\smallskip
\noindent 
Finally, define the skew-symmetric bilinear map 
$\omega:\mathfrak{a} \times \mathfrak{a} \rightarrow \mathfrak{a}^{*}$ by letting,
$\omega(a,b)=\overset{r}{\underset{i=1}{\sum}}B_{\mathfrak{g}}(D_i(a),b)a_i^{*}$, for all $a,b \in \mathfrak{a}$.
It follows easily from {\bf Lemma \ref{lema 2}.(iv)} that, $\omega(a_j,a_k)(a_i)=\omega(a_k,a_i)(a_j)$, for all $1 \leq i,j,k \leq r$. 
Since the $D_i$'s are $B_{\mathfrak{g}}$-skew-symmetric derivations of $\mathfrak{g}$, 
Leibniz rule and Jacobi identity imply that 
$\chi(a,\Psi(b,c))+\chi(b,\Psi(c,a))+\chi(c,\Psi(a,b))=0$ for all $a,b,c \in \mathfrak{a}$,
and therefore \eqref{chi_Phi_sum1} is also satisfied.

\smallskip
\noindent 
Now let $\mathfrak{H}=\mathfrak{a} \oplus \mathfrak{h} \oplus \mathfrak{a}^{*}$
be the double central extension of $(\mathfrak{h},[\cdot,\cdot]_{\mathfrak{h}},B_{\mathfrak{h}})$ by $\mathfrak{a}$.
Thus, the Lie bracket $[\cdot,\cdot]_{\mathfrak{H}}$ in $\mathfrak{H}$ is given
by means of \eqref{doble extension central}, and the invariant metric on $\mathfrak{H}$ is given
by $B_{\mathfrak{H}}(a+x+\alpha,b+y+\beta)=B_{\mathfrak{h}}(x,y)+\beta(a)+\alpha(b)$, for all $a,b \in \mathfrak{a}$, $x,y \in \mathfrak{h}$ and $\alpha,\beta \in \mathfrak{a}^{*}$.

\smallskip
\noindent 
Finally, the map $\Omega:\mathfrak{G} \rightarrow \mathfrak{H}$, defined by $\Omega(a+x+v)=a+x+\xi(v)$,
gives the desired isometry between $(\mathfrak{G},[\cdot,\cdot]_{\mathfrak{G}},B_{\mathfrak{G}})$ and 
the double central extension of $(\mathfrak{h},[\cdot,\cdot]_{\mathfrak{h}},B_{\mathfrak{h}})$ by $\mathfrak{a}$,
as claimed. And to round it up, notice that the linear map $L:\mathfrak{G} \rightarrow \mathfrak{G}$, 
given by, $L(x+u)=k(x)$ for all $x \in \mathfrak{g}$ and $u \in V$, satisfies the conditions of {\bf Proposition \ref{prop_c1}}.
\end{proof}

\noindent 
The next {\bf Corollary} of {\bf Theorem \ref{teorema_c1}} addresses the question of when $V$ is a non-degenerate ideal
of $(\mathfrak{G},[\cdot,\cdot]_{\mathfrak{G}},B_{\mathfrak{G}})$ under the special
hypothesis that $(\mathfrak{g},[\cdot,\cdot]_{\mathfrak{g}},B_{\mathfrak{g}})$
be indecomposable and non-nilpotent.
\begin{corolario}
Let $(\mathfrak{g},[\cdot,\cdot]_{\mathfrak{g}},B_{\mathfrak{g}})$ be an indecomposable non-nilpotent quadratic Lie algebra. Let $V$ be a vector space and let $(\mathfrak{G},[\cdot,\cdot]_{\mathfrak{G}})$ be a central extension of $(\mathfrak{g},[\cdot,\cdot]_{\mathfrak{g}})$ by $V$. If there is an invariant metric on $(\mathfrak{G},[\cdot,\cdot]_{\mathfrak{G}})$, then $V$ is a non-degenerate ideal of $(\mathfrak{G},[\cdot,\cdot]_{\mathfrak{G}},B_{\mathfrak{G}})$.
\end{corolario}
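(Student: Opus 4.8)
The plan is to argue by contradiction: I assume that $V$ is degenerate, i.e. $V\cap V^{\perp}\neq\{0\}$, and derive a contradiction with the non-nilpotency of $\mathfrak{g}$. The first step is to reduce to the isotropic-kernel setting. Applying {\bf Proposition \ref{proposicion reduccion}} to $\mathfrak{G}$, I obtain a non-degenerate ideal $U\subseteq V$ together with its orthogonal complement $U^{\perp}$, so that $\mathfrak{G}=U^{\perp}\overset{\perp}{\oplus}U$ and $(U^{\perp},[\cdot,\cdot]_{\mathfrak{G}}|_{U^{\perp}\times U^{\perp}})$ is a central extension of $(\mathfrak{g},[\cdot,\cdot]_{\mathfrak{g}},B_{\mathfrak{g}})$ whose kernel is the isotropic subspace $W:=V\cap V^{\perp}$:
$$
0\to W\to U^{\perp}\to\mathfrak{g}\to 0 .
$$
Because the decomposition $\mathfrak{G}=U^{\perp}\overset{\perp}{\oplus}U$ is orthogonal and both $U$ and $\mathfrak{G}$ are non-degenerate, the restriction $B_{\mathfrak{G}}|_{U^{\perp}\times U^{\perp}}$ is again an invariant metric. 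Hence the entire isotropic-kernel machinery of {\bf \S2.3}, and in particular {\bf Lemma \ref{lema 4}}, applies to $U^{\perp}$ with $W$ playing the role of $V$ and $W\neq\{0\}$ by assumption.

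The second step splits according to whether $\mathfrak{g}$ is simple. If $\mathfrak{g}$ is indecomposable but \emph{not} simple, then {\bf Lemma \ref{lema 4}.(iii)}, applied to the central extension $U^{\perp}$ of $\mathfrak{g}$ with isotropic kernel $W$, forces $(U^{\perp},[\cdot,\cdot]_{\mathfrak{G}}|_{U^{\perp}\times U^{\perp}})$ to be nilpotent. But $\mathfrak{g}\cong U^{\perp}/W$ is a quotient of $U^{\perp}$, and a quotient of a nilpotent Lie algebra is nilpotent; thus $\mathfrak{g}$ would be nilpotent, contradicting the hypothesis.

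The third step disposes of the remaining possibility that $\mathfrak{g}$ is simple. In that case $C(\mathfrak{g})=\{0\}$ and hence $\Nil(\mathfrak{g})=\{0\}$, so {\bf Lemma \ref{lema 4}.(i)} (equivalently, the bound $\dim_{\mathbb{F}}W\le\dim_{\mathbb{F}}C(\mathfrak{g})$ recorded in {\bf Remark \ref{3rmk}}) yields $\dim_{\mathbb{F}}W\le\dim_{\mathbb{F}}\Nil(\mathfrak{g})=0$, whence $W=\{0\}$, again contradicting $W\neq\{0\}$. Since $\mathfrak{g}$ is either simple or indecomposable-and-not-simple, both cases lead to a contradiction, so $V\cap V^{\perp}=\{0\}$ and $V$ is a non-degenerate ideal, as claimed.

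The argument is essentially a bookkeeping of earlier results; its only genuinely delicate point is that {\bf Lemma \ref{lema 4}.(iii)} covers only the non-simple indecomposable case, so the simple case must be treated separately by the dimension count on the center. I expect this case split to be the main (if modest) obstacle, together with the verification that $U^{\perp}$ really inherits a non-degenerate invariant metric so that {\bf Lemma \ref{lema 4}} is applicable.
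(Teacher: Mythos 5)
Your proof is correct, and its skeleton is the same as the paper's: pass to a central extension of $\mathfrak{g}$ with isotropic kernel $W=V\cap V^{\perp}$ via \textbf{Proposition \ref{proposicion reduccion}}, then use \textbf{Lemma \ref{lema 4}.(iii)} to force nilpotency and contradict the hypothesis. There are two differences, both in your favor. First, the paper treats the cases ``$V$ isotropic'' and ``$V$ degenerate but not isotropic'' separately, invoking \textbf{Lemma \ref{lema 4}.(iii)} directly in the first and through \textbf{Proposition \ref{proposicion reduccion}} in the second; you correctly observe that the proposition subsumes both cases at once (when $V$ is isotropic it simply returns $U=\{0\}$ and $U^{\perp}=\mathfrak{G}$). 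Second, and more substantively, the paper applies \textbf{Lemma \ref{lema 4}.(iii)} without comment, even though that lemma's hypothesis is that $\mathfrak{g}$ be indecomposable \emph{and not simple}, whereas the corollary permits $\mathfrak{g}$ simple; your separate disposal of the simple case, via $\dim_{\mathbb{F}}W\le\dim_{\mathbb{F}}\Nil(\mathfrak{g})=0$ (\textbf{Lemma \ref{lema 4}.(i)}, or the bound of \textbf{Remark \ref{3rmk}}), closes a small gap that the paper's own proof leaves open. Your quotient argument ($\mathfrak{g}\cong U^{\perp}/W$ is nilpotent whenever $U^{\perp}$ is) is also a clean way to transfer the lemma's conclusion from the extension back to $\mathfrak{g}$, where the paper implicitly relies on the internal step of the lemma's proof showing $\mathfrak{g}=\mathfrak{n}$ itself is nilpotent. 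In short: same approach, slightly more complete execution.
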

\begin{proof}
Let $B_{\mathfrak{G}}$ be an invariant metric on $(\mathfrak{G},[\cdot,\cdot]_{\mathfrak{G}})$. If $V$ is isotropic, {\bf Lemma \ref{lema 4}.(iii)} implies that $(\mathfrak{g},[\cdot,\cdot]_{\mathfrak{g}})$ is nilpotent. On the other hand, if $V \cap V^{\perp} \neq \{0\}$, {\bf Proposition \ref{proposicion reduccion}} states that there exists a quadratic Lie algebra $(\mathfrak{H},[\cdot,\cdot]_{\mathfrak{H}},B_{\mathfrak{H}})$
with $B_{\mathfrak{H}}=B_{\mathfrak{G}}|_{\mathfrak{H} \times \mathfrak{H}}$, 
 such that $(\mathfrak{G},[\cdot,\cdot]_{\mathfrak{G}},B_{\mathfrak{G}})$ is the orthogonal direct sum of $(\mathfrak{H},[\cdot,\cdot]_{\mathfrak{G}},B_{\mathfrak{G}}|_{\mathfrak{H} \times \mathfrak{H}})$ and $(U,B_{\mathfrak{G}}|_{U \times U})$, where $V=(V \cap V^{\perp}) \oplus U$, and $U^\perp=\mathfrak{H}$. Also, $(\mathfrak{H},[\cdot,\cdot]_{\mathfrak{G}},B_{\mathfrak{G}}|_{\mathfrak{H} \times \mathfrak{H}})$ is a central extension of $(\mathfrak{g},[\cdot,\cdot]_{\mathfrak{g}},B_{\mathfrak{g}})$ with isotropic kernel $V \cap V^{\perp}$. Thus, {\bf Lemma \ref{lema 4}.(iii)} says again that $(\mathfrak{g},[\cdot,\cdot]_{\mathfrak{g}})$ is nilpotent. 
\end{proof}
\noindent
Notice that if the indecomposable quadratic Lie algebra $(\mathfrak{g},[\cdot,\cdot]_{\mathfrak{g}})$ is nilpotent, the example exhibited in \S {\bf 2}, shows that the invariant
metric defined on the central extension $(\mathfrak{G},[\cdot,\cdot]_{\mathfrak{G}})$, cannot be obtained as the orthogonal direct sum of invariant metrics on $(\mathfrak{g},[\cdot,\cdot]_{\mathfrak{g}})$ and $V$.

\subsection{Central extensions of dimension one or two}

\noindent 
In this section we shall prove that within the context of quadratic Lie algebras,
any central extension $\mathfrak{G}$ of $\mathfrak{g}$ by $V$ 
having $\dim_{\mathbb{F}}V\le 2$, yields a Lie algebra monorphism 
$\mathfrak{g}\hookrightarrow\mathfrak{G}$, such that the composition 
$$
\begin{array}{ccccccc}
& & &  & \pi_{\mathfrak{g}} & &\\
&\mathfrak{g}&\hookrightarrow &\mathfrak{G} & \rightarrow & \mathfrak{g}, &
\end{array}
$$
is the identity map $\operatorname{Id}_{\mathfrak{g}}$. That is, the corresponding exact sequence $0\to V\to \mathfrak{G}\to\mathfrak{g}\to 0$, splits.
This amounts to show, as we shall see, that the derivations of the $2$-cocycle of the central extension,
are inner. On the other hand, the example {\bf \ref{ejemplo-correccion}} exhibited in {\bf \S 3} shows
that this is no longer true as soon as $\dim_{\mathbb{F}}V\ge 3$.

\subsubsection{One-dimensional central extensions}

\noindent Let $V$ be a one-dimensional vector space; say, $V=\mathbb{F}{v}$.
Let $D \in \Der\mathfrak{g}$ be the $B_{\mathfrak{g}}$-skew-symmetric derivation associated to the $2$-cocycle of the central extension. 
{\bf Lemma \ref{lema 2}} implies that  there is a $T \in \Gamma_{B_\mathfrak{g}}(\mathfrak{g})$ and a vector $a:=a_1 \in \Ker D$, such that,
$T \circ D= D\circ T=\ad_{\mathfrak{g}}(a)$, in which case, $\Ker D=\{x \in
\mathfrak{g}\,\mid\,[a,x]_{\mathfrak{G}}=0\}$ (see {\bf Lemma \ref{lema 2}.(v)}). It is easy to see, however, that $\Ker D= C_{\mathfrak{g}}(a)$. 
Indeed, write  $[a,x]_{\mathfrak{G}}=[a,x]_{\mathfrak{g}}+B_{\mathfrak{g}}(D(a),x)\,v$.
For $a \in \Ker D$, it follows that $[a,x]_{\mathfrak{G}}=[a,x]_{\mathfrak{g}}$.

\smallskip
\noindent
Now, if $T=0$ then $\mathfrak{g}$ is one-dimensional, 
because $k(\mathfrak{g}) \subseteq V$ and $k$ is injective (see {\bf Lemma \ref{lema 1}.(ii)}). 
But then, both, $[\cdot,\cdot]_{\mathfrak{g}}$ and $[\cdot,\cdot]_{\mathfrak{G}}$ 
are trivial and $D=0$.
Thus, assume that $T \neq 0$. Applying the argument given in {\bf Proposition \ref{lema 2}.(ix)}, we prove that $D$ is an inner derivation

\subsubsection{Two-dimensional central extensions}

\noindent 
We shall now assume that $\dim_{\mathbb{F}}V=2$.
Recall from {\bf Remark \ref{3rmk}.2} (after {\bf Lemma \ref{lema 31}}) that we may assume $\dim_{\mathbb{F}}V\le \dim_{\mathbb{F}}C(\mathfrak{g})$.
In particular, $\dim_{\mathbb{F}}\mathfrak{g}\ge 2$. We shall now prove
by induction on $\dim_{\mathbb{F}}\mathfrak{g}$ that the
derivations $D_1$ and $D_2$ associated to the $2$-cocycle of the central extension are inner.
\smallskip

\noindent Notice that we only have to analyze the case when $V$ is isotropic.
Were it the case that $V\cap V^\perp\ne\{0\}$ and $V \nsubseteq V^{\perp}$, then $V \cap V^{\perp}$ is one dimensional. Let $u \in V$ such that $V=V \cap V^{\perp} \oplus \F u$. Then $B_{\G}(u,u) \neq 0$. Since $u \in C(\G)$, then $\F u$ is a non-degenerate ideal of $\G$, and $\G=(\F u)^{\perp} \oplus \F u$. Using the same argument as in {\bf Proposition \ref{proposicion reduccion}}, we have that $(\F u)^{\perp}$ is a one-dimensional central extension of $(\g,[\cdot,\cdot]_{\g},B_{\g})$, but just as we have seen, the short exact sequence associated to this central extension splits, which implies that the short exact sequence associated to the central extension $(\G,[\cdot,\cdot]_{\G},B_{\G})$, splits too.
\smallskip

\noindent The induction hypothesis consists of that the existence of an invariant metric on any central extension of an $m$-dimensional quadratic Lie algebra, with $m<n$, by a $2$-dimensional vector space, implies that the short exact sequence associated to the extension splits.
\smallskip

\noindent
Start with a 2-dimensional quadratic Lie algebra, $(\mathfrak{g},[\cdot,\cdot]_{\mathfrak{g}},B_{\mathfrak{g}})$. In this case, 
$(\mathfrak{g},[\cdot,\cdot]_{\mathfrak{g}})$ is Abelian. Let $(\mathfrak{G},[\cdot,\cdot]_{\mathfrak{G}})$ be a central extension of $\mathfrak{g}$ by $V$ admitting an invariant metric. If $V$ is isotropic then, {\bf Lemma \ref{lema 31}.(i)} implies that $\mathfrak{g}$ is 
spanned by $a_1$ and $a_2$. Using the hypotheses $D_1(a_1)=D_2(a_2)=0$ together with the $B_{\mathfrak{g}}$-skew-symmetry property of $D_1$ and $D_2$, we can show that $[a_1,a_2]_{\mathfrak{G}}=0$. Indeed,
$$
[a_1,a_2]_{\mathfrak{G}}=[a_1,a_2]_{\mathfrak{g}}+B_{\mathfrak{g}}(D_1(a_1),a_2)v_1+B_{\mathfrak{g}}(D_2(a_1),a_2)v_2
$$
$$
=B_{\mathfrak{g}}(D_2(a_1),a_2)v_2=-B_{\mathfrak{g}}(a_1,D_2(a_2))v_2=0.
$$
Since $V \subseteq C(\mathfrak{G})$, it follows that $[\cdot,\cdot]_{\mathfrak{G}}\equiv 0$, which implies that $D_1=D_2=0$.
\smallskip

\noindent Let $(\G,[\cdot,\cdot]_{\G},B_{\G})$ be a quadratic Lie algebra which in turn is a central extension of an $n$-dimensional quadratic Lie algebra $(\g,[\cdot,\cdot]_{\g},B)$, by a $2$-dimensional vector space $V$, with $n \ge 2$.

\begin{lema}
Let $\mathfrak{g}$ be $n$-dimensional. If $V$ be isotropic, then $D_1=D_2=0$.
\end{lema}
\begin{proof}
By {\bf Lemma \ref{lema 31}.(i)}, $\{a_1,a_2\}$ and $\{h(v_1),h(v_2)\}$ are both linearly independent sets.
So, $\mathfrak{a}=\operatorname{Span}_{\mathbb{F}}\{a_1,a_2\}$ is two-dimensional. 
Using the {\bf Remark \ref{3rmk}.2} that follows {\bf Lemma \ref{lema 31}}, there is a Lie algebra structure on $\mathfrak{a}^{\perp}$, such that $(\mathfrak{g},[\cdot,\cdot]_{\mathfrak{g}})$ is a central extension of $(\mathfrak{a}^{\perp},[\cdot,\cdot]_{\mathfrak{a}^{\perp}})$ 
by $\Ker T=\operatorname{Span}_{\mathbb{F}}\{h(v_1),h(v_2)\}$:
\begin{equation}\label{ex19}
\begin{array}{rcccccccl}
\,& \, & \, & \iota & \,&\pi_{\mathfrak{a}^{\perp}} &\,&\,& \\
0 & \rightarrow & \Ker T & \rightarrow & \mathfrak{g} & \rightarrow & \mathfrak{a}^{\perp} & \rightarrow & 0.
\end{array}
\end{equation}
Furthermore, the bilinear map $B_{\mathfrak{a}^{\perp}}:\mathfrak{a}^{\perp} \times
\mathfrak{a}^{\perp} \rightarrow \mathbb{F}$, defined by
$B_{\mathfrak{a}^{\perp}}(x,y)=B_{\mathfrak{g}}(Tx,y)$, for all $x,y \in \mathfrak{a}^{\perp}$, makes $(\mathfrak{a}^{\perp},[\cdot,\cdot]_{\mathfrak{a}^{\perp}},B_{\mathfrak{a}^{\perp}})$ into an $(n-2)$-dimen\-sion\-al quadratic Lie algebra.  
By the induction hypothesis, the short exact sequence \eqref{ex19} splits. Therefore,
$\mathfrak{a}^{\perp}$ is an ideal of $(\mathfrak{g},[\cdot,\cdot]_{\mathfrak{g}})$ and, consequently, so is $\mathfrak{a}$.
Since $[\mathfrak{g},\mathfrak{g}]_{\mathfrak{g}} \subseteq
\Im T$ and $\mathfrak{g}=\Im T \oplus \mathfrak{a}$ (see {\bf Lemma \ref{lema 31}.(iv)}), it follows that $\mathfrak{a} \subseteq C(\mathfrak{g})$. Hence, $D_i\circ T(x)=T \circ D_i(x)=[a_i,x]_{\mathfrak{g}}=0$ for all $x \in \mathfrak{g}$ and $i=1,2$;
that is, $D_i(\Im T)=\{0\}$ ($i=1,2$). Since $D_1(a_1)=D_2(a_2)=0$ ({\bf Lemma \ref{lema 2}.(iii)}), 
it follows that $[a_1,a_2]_{\mathfrak{G}}=0$, which implies $\mathfrak{a} \subseteq C(\mathfrak{G})$. 
Also from {\bf Lemma \ref{lema 2}.(iii)}, we finally conclude that $D_i=h \circ \ad_{\mathfrak{G}}(a_i)=0$ ($i=1,2$).
\end{proof}
\noindent We can now state the following result.

\begin{proposicion}\label{teorema c2}
Let $(\mathfrak{g},[\cdot,\cdot]_{\mathfrak{g}},B_{\mathfrak{g}})$ be a quadratic Lie algebra. 
Let $(\mathfrak{G},[\cdot,\cdot]_{\mathfrak{G}})$ be a central extension of $(\mathfrak{g},[\cdot,\cdot]_{\mathfrak{g}})$ by $V$,
and assume that $\dim_{\mathbb{F}}V\le 2$.
If there exists an invariant metric on $(\mathfrak{G},[\cdot,\cdot]_{\mathfrak{G}})$, 
then the short exact sequence associated to the extension, splits.
\end{proposicion}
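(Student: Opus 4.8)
The plan is to reduce the splitting assertion to a statement about the derivations attached to the defining $2$-cocycle, and then settle that statement by a case analysis on the degeneracy type of $V$. Concretely, by the equivalence (i)$\Leftrightarrow$(ii) of {\bf Proposition \ref{escision}} together with {\bf Proposition \ref{proposicion 1}}, the sequence $0\to V\to\mathfrak{G}\to\mathfrak{g}\to 0$ splits precisely when the $B_{\mathfrak{g}}$-skew-symmetric derivations $D_1,\dots,D_r$ (here $r=\dim_{\mathbb{F}}V\le 2$) associated to the cocycle are all inner. Thus the whole proposition amounts to showing that these $D_i$ are inner under the standing hypothesis that $(\mathfrak{G},[\cdot,\cdot]_{\mathfrak{G}})$ admits an invariant metric.

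First I would dispose of the one-dimensional case $r=1$. Here {\bf Lemma \ref{lema 2}.(vii)} produces $T\in\Gamma_{B_{\mathfrak{g}}}(\mathfrak{g})$ and a vector $a\in\Ker D$ with $T\circ D=D\circ T=\ad_{\mathfrak{g}}(a)$ and $\Ker D=C_{\mathfrak{g}}(a)$. If $T=0$, then $k(\mathfrak{g})\subseteq V$ together with the injectivity of $k$ ({\bf Lemma \ref{lema 1}.(i)}) forces $\dim_{\mathbb{F}}\mathfrak{g}=1$, whence $D=0$ is inner; if $T\neq 0$, then {\bf Corollary \ref{corolario internas}} (equivalently {\bf Lemma \ref{lema 2}.(ix)}) gives that $D$ is inner. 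This is exactly the content recorded in the one-dimensional subsection, and it also serves as the engine for the degenerate-but-not-isotropic case below.

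Next I would treat $r=2$ by splitting into the three possibilities for $V$ afforded by the restriction of $B_{\mathfrak{G}}$. If $V$ is non-degenerate, {\bf Proposition \ref{teorema de estructura 2}.(i)} exhibits $(\mathfrak{G},[\cdot,\cdot]_{\mathfrak{G}},B_{\mathfrak{G}})$ as an orthogonal direct sum $(\mathfrak{g},\bar{B}_{\mathfrak{g}})\overset{\perp}{\oplus}(V,B_{\mathfrak{G}}|_{V\times V})$ in which the first summand is an ideal isomorphic to $\mathfrak{g}$; {\bf Proposition \ref{escision}.(iv)} then yields the splitting at once. If $V\cap V^{\perp}\neq\{0\}$ but $V\nsubseteq V^{\perp}$, then $V\cap V^{\perp}$ is one-dimensional, so choosing $u\in V$ with $B_{\mathfrak{G}}(u,u)\neq 0$ makes $\mathbb{F}u$ a non-degenerate central ideal and $\mathfrak{G}=(\mathbb{F}u)^{\perp}\overset{\perp}{\oplus}\mathbb{F}u$; now $(\mathbb{F}u)^{\perp}$ is a one-dimensional central extension of $\mathfrak{g}$ admitting an invariant metric, so by the case $r=1$ it splits, and this splitting lifts to a splitting of $\mathfrak{G}$. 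Finally, if $V$ is isotropic, the preceding Lemma gives $D_1=D_2=0$, so the derivations are trivially inner.

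The step I expect to be the main obstacle is the isotropic case for $r=2$, which is where the induction on $\dim_{\mathbb{F}}\mathfrak{g}$ is genuinely used. The delicate point is the reduction carried out inside $\mathfrak{g}$ itself: using {\bf Lemma \ref{lema 31}.(v)} and {\bf Remark \ref{3rmk}.2}, one realizes $(\mathfrak{g},[\cdot,\cdot]_{\mathfrak{g}})$ as a central extension of the strictly smaller quadratic Lie algebra $(\mathfrak{a}^{\perp},[\cdot,\cdot]_{\mathfrak{a}^{\perp}},B_{\mathfrak{a}^{\perp}})$ by $\Ker T$, so that the induction hypothesis splits that inner sequence; from $\mathfrak{g}=\Im T\oplus\mathfrak{a}$ and $[\mathfrak{g},\mathfrak{g}]_{\mathfrak{g}}\subseteq\Im T$ one then deduces $\mathfrak{a}\subseteq C(\mathfrak{g})$, upgrades this to $\mathfrak{a}\subseteq C(\mathfrak{G})$ using $D_1(a_1)=D_2(a_2)=0$, and concludes $D_i=h\circ\ad_{\mathfrak{G}}(a_i)=0$. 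Once all three cases have produced inner $D_i$, {\bf Proposition \ref{escision}} and {\bf Proposition \ref{proposicion 1}} close the argument, the base case $\dim_{\mathbb{F}}\mathfrak{g}=2$ having already been checked directly for the Abelian two-dimensional $\mathfrak{g}$.
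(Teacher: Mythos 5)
Your proposal is correct and follows essentially the same route as the paper: reduce splitting to innerness of the cocycle derivations via Propositions \ref{escision} and \ref{proposicion 1}, settle $r=1$ with Lemma \ref{lema 2}.(ix), reduce the degenerate-but-not-isotropic $r=2$ case to the one-dimensional case, and handle the isotropic $r=2$ case by induction on $\dim_{\mathbb{F}}\mathfrak{g}$ using Lemma \ref{lema 31}, Remark \ref{3rmk}.2 and the conclusion $D_1=D_2=0$. The only cosmetic difference is that you spell out the non-degenerate case explicitly (via Proposition \ref{teorema de estructura 2}.(i) and Proposition \ref{escision}.(iv)), which the paper leaves implicit.
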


\medskip
\noindent
\subsection*{Acknowledgements}

\noindent 
Part of this work was contained in the Ph.D. thesis  (2014) of the junior author.
While in graduate school he was supported by CONACYT Grant $33605$.
The final version of this manuscript was produced while he was a postdoctoral 
fellow at CIMAT-M\'erida with financial support provided by Proyecto FOMIX: YUC-$2013$-C$14$-$221183$.
He thanks his colleagues at CIMAT-M\'erida for their help, friendship and hospitality,
making his stay at CIMAT's facilities a highly rewarding professional experience.
Last, but not least, he would also like to thank Cesar Maldonado Ahumada for
reading the first draft of this paper, providing valuable suggestions that improved the
quality of the manuscript.

%\Addresses
\end{document}